\numberwithin{equation}{section}
\theoremstyle{definition}
\newtheorem{Definition}{Definition}[section]
\newtheorem{Remark}[Definition]{Remark}
\newtheorem{Example}[Definition]{Example}
\theoremstyle{plain}
\newtheorem{Theorem}[Definition]{Theorem}
\newtheorem{Proposition}[Definition]{Proposition}
\newtheorem{Lemma}[Definition]{Lemma}
\newtheorem{Corollary}[Definition]{Corollary}
\newcommand{\R}{\mathbb R}
\newcommand{\Z}{\mathbb Z}
\newcommand{\N}{\mathbb N}
\newcommand{\Fl}{\text{\normalfont Fl}}
\newcommand{\Vol}{\mathrm{Vol}}
\newcommand{\eps}{\varepsilon}
\newcommand{\vphi}{\varphi}
\newcommand{\Ric}{\mathrm{Ric}}
\newcommand{\comp}{\Subset}
\newcommand{\D}{\mathcal{D}}
\newcommand{\Riem}{\mathrm{Riem}}
\newcommand{\Test}{\mathrm{Test}}
\newcommand{\TestV}{\mathrm{TestV}}
\newcommand{\TestF}{\mathrm{TestF}}
\newcommand{\cT}{T}
\newcommand{\enumlabelformat}{\roman}
\newcommand{\enumlabelfont}[1]{#1}
\newlength{\thelabelsep}
\setlist{labelsep=\thelabelsep}
\setlist[enumerate,1]{font=\enumlabelfont,label=(\enumlabelformat*),leftmargin=2.5em}
\setlist[itemize]{leftmargin=2.5em,label=$-$}
\newcounter{inlineenum}
\renewcommand{\theinlineenum}{\enumlabelformat{inlineenum}}
\let\epsilon\varepsilon
\let\phi\varphi
\let\save@mathaccent\mathaccent
\newcommand*\if@single[3]{%
  \setbox0\hbox{${\mathaccent"0362{#1}}^H$}%
  \setbox2\hbox{${\mathaccent"0362{\kern0pt#1}}^H$}%
  \ifdim\ht0=\ht2 #3\else #2\fi
  }
\newcommand*\rel@kern[1]{\kern#1\dimexpr\macc@kerna}
\newcommand*\widebar[1]{\@ifnextchar^{{\wide@bar{#1}{0}}}{\wide@bar{#1}{1}}}
\newcommand*\wide@bar[2]{\if@single{#1}{\wide@bar@{#1}{#2}{1}}{\wide@bar@{#1}{#2}{2}}}
\newcommand*\wide@bar@[3]{%
  \begingroup
  \def\mathaccent##1##2{%
%Enable nesting of accents:
    \let\mathaccent\save@mathaccent
%If there's more than a single symbol, use the first character instead (see below):
    \if#32 \let\macc@nucleus\first@char \fi
%Determine the italic correction:
    \setbox\z@\hbox{$\macc@style{\macc@nucleus}_{}$}%
    \setbox\tw@\hbox{$\macc@style{\macc@nucleus}{}_{}$}%
    \dimen@\wd\tw@
    \advance\dimen@-\wd\z@
%Now \dimen@ is the italic correction of the symbol.
    \divide\dimen@ 3
    \@tempdima\wd\tw@
    \advance\@tempdima-\scriptspace
%Now \@tempdima is the width of the symbol.
    \divide\@tempdima 10
    \advance\dimen@-\@tempdima
%Now \dimen@ = (italic correction / 3) - (Breite / 10)
    \ifdim\dimen@>\z@ \dimen@0pt\fi
%The bar will be shortened in the case \dimen@<0 !
    \rel@kern{0.6}\kern-\dimen@
    \if#31
      \overline{\rel@kern{-0.6}\kern\dimen@\macc@nucleus\rel@kern{0.4}\kern\dimen@}%
      \advance\dimen@0.4\dimexpr\macc@kerna
%Place the combined final kern (-\dimen@) if it is >0 or if a superscript follows:
      \let\final@kern#2%
      \ifdim\dimen@<\z@ \let\final@kern1\fi
      \if\final@kern1 \kern-\dimen@\fi
    \else
      \overline{\rel@kern{-0.6}\kern\dimen@#1}%
    \fi
  }%
  \macc@depth\@ne
  \let\math@bgroup\@empty \let\math@egroup\macc@set@skewchar
  \mathsurround\z@ \frozen@everymath{\mathgroup\macc@group\relax}%
  \macc@set@skewchar\relax
  \let\mathaccentV\macc@nested@a
%The following initialises \macc@kerna and calls \mathaccent:
  \if#31
    \macc@nested@a\relax111{#1}%
  \else
%If the argument consists of more than one symbol, and if the first token is
%a letter, use that letter for the computations:
    \def\gobble@till@marker##1\endmarker{}%
    \futurelet\first@char\gobble@till@marker#1\endmarker
    \ifcat\noexpand\first@char A\else
      \def\first@char{}%
    \fi
    \macc@nested@a\relax111{\first@char}%
  \fi
  \endgroup
}
\newcommand*{\de}{\text{\normalfont d}}
\newcommand*{\meas}{\mathfrak{m}}
\DeclareMathOperator{\supp}{supp}
\newcommand*{\loc}{\text{\normalfont loc}}
\DeclareMathOperator{\Hess}{Hess}
\DeclareMathOperator{\Div}{div}
\title{Ricci curvature bounds and rigidity for non-smooth Riemannian and semi-Riemannian metrics}
\author{Michael Kunzinger\footnote{Department of Mathematics, University of Vienna, Oskar-Morgenstern-Platz 1, 1090 Wien, Austria. \newline michael.kunzinger@univie.ac.at \newline argam.ohanyan@univie.ac.at \newline alessio@vardabasso@univie.ac.at}\\Argam Ohanyan$^*$\\Alessio Vardabasso$^*$
}
\begin{document}

\date{\today}

%\date{Received: date /Accepted: date}

\maketitle

\begin{abstract}
We study rigidity problems for Riemannian and semi-Riemannian manifolds with metrics of low regularity. Specifically, we prove a version of the Cheeger-Gromoll splitting theorem \cite{CheegerGromoll72splitting} for Riemannian metrics and the flatness criterion for semi-Riemannian metrics of regularity $C^1$. With our proof of the splitting theorem, we are able to obtain an isometry of higher regularity than the Lipschitz regularity guaranteed by the $\mathsf{RCD}$-splitting theorem \cite{gigli2013splitting, gigli2014splitoverview}. Along the way, we establish a Bochner-Weitzenböck identity which permits 
both the non-smoothness of the metric and of the vector fields, complementing a recent similar result in \cite{mondino2024equivalence}. The last section of the article is dedicated to the discussion of various notions of Sobolev spaces in low regularity, as well as an alternative proof of the equivalence (see \cite{mondino2024equivalence}) between distributional Ricci curvature bounds and $\mathsf{RCD}$-type bounds, using in part the stability of the variable $\mathsf{CD}$-condition under suitable limits \cite{ketterer2017variableCD}.

\vspace{1em}

\noindent
\emph{Keywords:} Low regularity metric, Ricci curvature, $\mathsf{CD}$, $\mathsf{RCD}$, splitting, flatness, rigidity
\medskip

\noindent
\emph{MSC2020:} 53C21, 53C24, 46T30, 49Q22

\end{abstract}
\newpage

\tableofcontents
\newpage

\section{Introduction}\label{Section: Introduction}

The study of metrics whose regularity is below $C^{\infty}$ (in fact, below $C^2$, since for most global comparison-geometric purposes the metric is almost never differentiated more than twice) is of fundamental importance in (semi-)Riemannian geometry. Instances in which non-regular Riemannian metrics arise include Ricci flows and gluing constructions. On the semi-Riemannian side, specifically the physically relevant Lorentzian signature $(-,+,\dots,+)$, which provides the geometric framework for General Relativity (i.e., Einstein's theory of gravity), gives rise to non-smooth metrics naturally. In this context, non-smooth Lorentzian metrics need to be studied in order to understand and give physical meaning to phenomena of crucial importance in physics such as spacetime singularities. We refer to \cite{graf2020singularity, KOSS22C1HawkingPenrose, CGHKS24} and the references therein. The methods developed in these works are also of relevance to us in this article, despite the different signature, since most of the approximation and convolution based tools developed there to deal with low regularity metrics and curvature tensors are analytical results on which the signature has no bearing.

While the analysis of low regularity metrics is one way to deal with non-smooth geometry, there is also the theory of ($\mathsf{R}$)$\mathsf{CD}$-spaces, a rich field studying the geometry of (infinitesimally Hilbertian) metric measure spaces with synthetic Ricci curvature bounded below. Spaces with synthetic Ricci curvature bounds (i.e.\ $\mathsf{CD}$-spaces) are defined via entropic convexity along Wasserstein geodesics and were developed in the works of McCann, Sturm and Lott-Villani \cite{McCann97, Stu:06a, Stu:06b, LottVillani09}. The $\mathsf{CD}$-condition turns out to be too general for many purposes, since it does not distinguish between properly Riemannian and merely Riemann-Finslerian spaces. In particular, many key rigidity results such as the Cheeger-Gromoll splitting theorem \cite{CheegerGromoll72splitting} cannot be generalized to $\mathsf{CD}$-spaces. To single out strictly Riemannian spaces, Gigli 
introduced the notion of \emph{infinitesimal Hilbertianity} in \cite{gigli2015}, leading to the rich theory of \emph{Riemannian} $\mathsf{CD}$-spaces (or, $\mathsf{RCD}$-spaces, for short). Similar developments have been achieved recently in Lorentzian signature, see \cite{mccann2020displacement, mondino2022optimal, cavallettimondino2020optimal, braun2023renyi, mccann2024synthetic, ketterer2024characterization, braunmccann2023variablecurvature, Octet24+, braun2024nonsmooth, cavalletti2024optimal}. The natural question of compatibility between distributional Ricci curvature bounds for low regularity Riemannian metrics and the $\mathsf{RCD}$-condition was recently answered to the positive by Mondino and Ryborz in \cite{mondino2024equivalence} (after an initial contribution by Kunzinger, Oberguggenberger and Vickers \cite{KOV:22}). Moreover, it turns out that a splitting theorem in the spirit of Cheeger-Gromoll can be proven for $\mathsf{RCD}$-spaces, and this was achieved by Gigli \cite{gigli2013splitting, gigli2014splitoverview}.

The principal aim of this article is to establish rigidity theorems for metrics of low regularity. First, we prove the following version of the Cheeger-Gromoll splitting theorem, which has the advantage of yielding a splitting isometry of higher regularity than just the Lipschitz continuity which would follow by the $\mathsf{RCD}$-splitting theorem:

\begin{Theorem}[Cheeger-Gromoll splitting for low regularity Riemannian metrics]
Let $M$ be a smooth manifold and $g$ a Riemannian metric tensor on $M$ of regularity $C^k$, $k \geq 1$. Suppose that the following hold:
\begin{enumerate}
    \item $(M,d_g)$ is a complete metric space, where $d_g$ is the Riemannian distance on $M$ induced by $g$.
    \item $\Ric(X,X) \geq 0$ in $\mathcal{D}'(M)$ for every compactly supported smooth vector field $X$ on $M$.
    \item There exists a geodesic $c:\R \to M$ with the property $d_g(c(s),c(t)) = |s-t|$ for all $s,t \in \R$ (i.e.\ $c$ is a \emph{line}). 
\end{enumerate}
Then there exists an isometric $C^{k+1}$-diffeomorphism $\Phi:(M' \times \R, h+dt^2) \to (M,g)$, where $M'$ is a smooth manifold with a Riemannian metric $h \in C^k$ which satisfies $\Ric_h \geq 0$.
\end{Theorem}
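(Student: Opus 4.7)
The plan is to mimic the classical Cheeger-Gromoll argument via the Busemann function associated with the line $c$, but with two low-regularity substitutes. First, since $(M, d_g)$ is complete and $\Ric(X,X) \geq 0$ holds in $\mathcal{D}'(M)$, the equivalence of distributional Ricci bounds and $\mathsf{RCD}$-type bounds of Mondino-Ryborz \cite{mondino2024equivalence} (or the alternative proof carried out in the last section of the present paper) shows that $(M, d_g, \mathrm{vol}_g)$ is an $\mathsf{RCD}(0, N)$-space, with $N := \dim M$. Gigli's $\mathsf{RCD}$ splitting theorem \cite{gigli2013splitting, gigli2014splitoverview} then yields the Busemann function
\begin{equation*}
b(x) := \lim_{t \to +\infty} \bigl( t - d_g(x, c(t)) \bigr),
\end{equation*}
as a $1$-Lipschitz function with $|\nabla b|_g = 1$ almost everywhere and satisfying $\Delta_g b = 0$ distributionally, where $\Delta_g$ is the Laplace-Beltrami operator of $g$. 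At this stage the $\mathsf{RCD}$ theory already provides a Lipschitz isometric splitting; the remaining work is to upgrade this to regularity $C^k$.

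\textbf{Elliptic bootstrap.} The Laplace-Beltrami operator $\Delta_g = |g|^{-1/2} \partial_i \bigl( |g|^{1/2} g^{ij} \partial_j \cdot \bigr)$ is uniformly elliptic with $C^k$ coefficients. Combining this with the distributional identity $\Delta_g b = 0$ and the a priori Lipschitz regularity of $b$, a standard bootstrap based on interior $W^{2,p}_{\mathrm{loc}}$-estimates and Schauder-type estimates (or their $C^k$ analogues obtained via convolution, in the spirit of \cite{graf2020singularity, KOSS22C1HawkingPenrose, CGHKS24}) lifts $b$ to $C^{k+1}(M)$. In particular, $\nabla b$ is a genuine $C^k$ unit vector field on $M$.

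\textbf{Parallelism and product structure.} Feeding $b$ into the low-regularity Bochner-Weitzenböck identity established earlier in the paper yields
\begin{equation*}
0 = \tfrac{1}{2} \Delta_g |\nabla b|_g^2 = |\Hess_g b|_g^2 + \langle \nabla \Delta_g b, \nabla b \rangle_g + \Ric_g(\nabla b, \nabla b).
\end{equation*}
The middle term vanishes because $\Delta_g b = 0$, and the remaining two terms are pointwise and distributionally nonnegative, respectively; testing against compactly supported nonnegative functions then forces $\Hess_g b \equiv 0$. Hence $\nabla b$ is a parallel $C^k$ unit vector field, $M' := b^{-1}(0)$ is a $C^{k+1}$-embedded hypersurface by the regular value theorem, and the flow $\Phi_t$ of $\nabla b$ assembles into the sought $C^k$-diffeomorphism $\Phi: M' \times \R \to M$, $(x,t) \mapsto \Phi_t(x)$. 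Because $\mathcal{L}_{\nabla b} g = 2 \Hess_g b = 0$, this diffeomorphism is isometric for the product metric $h \oplus dt^2$, where $h := g|_{M'}$ is of class $C^k$ as the pullback of $g$ by a $C^{k+1}$-embedding; the bound $\Ric_h \geq 0$ follows by restriction in the product decomposition.

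\textbf{Main obstacle.} The hardest step is reaching genuine $C^{k+1}$-regularity for the Busemann function at the borderline $k=1$, where the Laplacian has only $C^1$ coefficients and classical Schauder theory does not directly apply. A secondary, more routine but not automatic, check is that $b$ meets the precise regularity hypotheses of the non-smooth Bochner-Weitzenböck identity used above, so that its pointwise/distributional balancing genuinely yields $\Hess_g b = 0$ on all of $M$. Both points are exactly where the paper's convolution-based and distributional machinery for low-regularity metrics comes into play.
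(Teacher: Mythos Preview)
Your overall strategy---harmonic Busemann function, Bochner, flow of $\nabla b$---matches the paper's, and invoking the $\mathsf{RCD}$-equivalence plus Gigli's splitting to obtain $\Delta_g b=0$ and $|\nabla b|=1$ is a legitimate shortcut replacing the paper's direct approximation arguments (its Propositions establishing subharmonicity of $b^\pm$ via smooth Laplacian comparison on $g_i$, the maximum-principle step, and the geodesic argument for $|\nabla b|=1$). That substitution is fine, though it imports heavier machinery.

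The genuine gap is the order in which you run the elliptic bootstrap and the Bochner step. You claim that $\Delta_g b=0$ with $C^k$ coefficients yields $b\in C^{k+1}$ by ``standard'' $W^{2,p}$/Schauder estimates, and only afterwards apply Bochner. But $C^k$ coefficients are not $C^{k-1,\alpha}$ in general, so Schauder theory does not deliver $C^{k+1}$; at $k=1$ the divergence-form operator has merely $C^1$ (hence $C^{0,\alpha}$ for every $\alpha<1$, but not $C^{0,1}$) coefficients, and the best you get is $b\in W^{2,p}_{\mathrm{loc}}$ for all $p<\infty$, i.e.\ $C^{1,\alpha}_{\mathrm{loc}}$---not $C^2$. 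The paper's resolution is not a convolution trick but a reversal of your order: it first establishes only $b\in W^{2,\infty^-}_{\mathrm{loc}}$ from $L^p$-elliptic regularity, then applies the Bochner--Weitzenb\"ock identity at \emph{that} regularity (this is precisely why the identity is proved for $X\in W^{1,\infty^-}_{\mathrm{loc}}$) to conclude $\Hess_g b=0$ a.e. The vanishing Hessian then gives the pointwise relation
\[
\partial_\alpha\partial_\beta b \;=\; \Gamma^\gamma_{\alpha\beta}\,\partial_\gamma b,
\]
whose right-hand side is $C^0$ when $g\in C^1$, forcing $b\in C^2$; iterating this (now $\Gamma\in C^{k-1}$ and $\partial b\in C^{k}$ once $b\in C^{k+1}$ is reached step by step) yields $b\in C^{k+1}$. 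In short, the sharp $C^{k+1}$ regularity comes from the algebraic identity $\Hess_g b=0$, not from elliptic theory for $\Delta_g b=0$, and your proposal has these two steps in the wrong order.
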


The second rigidity result we prove concerns characterizations of flatness of low regularity metrics. This result is independent of the signature, so we formulate it for general semi-Riemannian manifolds:

\begin{Theorem}[Flatness criterion for low regularity semi-Riemannian metrics]
    Let $M$ be a smooth manifold and $g$ a semi-Riemannian metric tensor on $M$ of constant signature $(l_1,l_2)$ and of regularity $C^k$, $k \geq 1$. Then the following are equivalent:
    \begin{enumerate}
        \item $\Riem_g = 0$ in $C^{k-2}(\cT^1_3 M)$ (where $C^{k-2} = \mathcal{D}^{'(1)}$ if $k=1$).
        \item $(M,g)$ is $C^k$-frame flat, i.e.\ in a neighborhood of every point there exists a parallel orthonormal frame of regularity $C^k$.
        \item $(M,g)$ is $C^{k+1}$-coordinate flat, i.e.\ each point in $M$ is contained in a neighborhood $U$ for which there exists an isometric $C^{k+1}$-diffeomorphism $\phi:U \to \phi(U) \subseteq \R^{l_1,l_2}$ onto an open subset of $\R^{l_1,l_2}$, the semi-Euclidean space of signature $(l_1,l_2)$.
    \end{enumerate}
\end{Theorem}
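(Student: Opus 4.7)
The plan is to establish (iii) $\Rightarrow$ (ii), (ii) $\Rightarrow$ (iii), and (ii) $\Rightarrow$ (i), all of which are relatively direct, and to invest the bulk of the work in (i) $\Rightarrow$ (ii). For (iii) $\Rightarrow$ (ii), the standard coordinate vector fields in a flat chart are parallel and orthonormal for the semi-Euclidean metric, and pushforward under the $C^{k+1}$ isometry yields a $C^k$ parallel orthonormal frame. For (ii) $\Rightarrow$ (i), given a parallel $C^k$ orthonormal frame $(e_i)$ the connection $1$-forms $\omega^i_j$ vanish identically, so the second Cartan structure equation gives $\Omega = d\omega + \omega \wedge \omega = 0$, whence $\Riem_g$ vanishes in the frame basis and therefore in any basis, distributionally. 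For (ii) $\Rightarrow$ (iii), the dual coframe $(\theta^i)$ is $C^k$, and the torsion-free first structure equation $d\theta^i + \omega^i_j \wedge \theta^j = 0$ with $\omega = 0$ gives $d\theta^i = 0$; a $C^k$-Poincar\'e lemma on a simply connected neighborhood produces $C^{k+1}$ primitives $x^i$ with $\theta^i = dx^i$, and these assemble into a $C^{k+1}$ chart in which $g$ has the prescribed semi-Euclidean form.

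The central implication is (i) $\Rightarrow$ (ii). I would fix $p_0 \in M$ and work in a normal neighborhood $U$, which exists in the low-regularity setting because the geodesic ODE has $C^{k-1}$ coefficients. Pick an orthonormal basis $(v_1, \dots, v_n)$ of $T_{p_0}M$ and define a candidate frame $(E_i)$ on $U$ by radial parallel transport: for $q = \exp_{p_0}(v)$, set $E_i(q)$ to be the parallel transport of $v_i$ along $t \mapsto \exp_{p_0}(tv)$. Orthonormality is automatic since parallel transport is an isometry, and $E_i$ is parallel in the radial direction by construction. The decisive question is whether $(E_i)$ is parallel in \emph{every} direction. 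In the smooth case, one argues by geodesic variation: consider $\gamma_{v,s}(t) = \exp_{p_0}(t(v+su))$ with transverse Jacobi field $J$ and frames $E_i(t,s)$ parallel-transported along $\gamma_{v,s}$; the variation $\tfrac{D}{ds}E_i$ satisfies the linear ODE $\tfrac{D}{dt}\tfrac{D}{ds}E_i = \Riem(\gamma', J)E_i$ with vanishing initial condition at $t = 0$, so $\Riem = 0$ forces $\tfrac{D}{ds}E_i \equiv 0$. Evaluated at $t = 1$ this gives $\nabla_{J(1)} E_i(q) = 0$; as $u$ varies, $J(1) = (d\exp_{p_0})_v(u)$ sweeps out $T_q M$, so $\nabla E_i \equiv 0$. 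The regularity $E_i \in C^k$ then follows by bootstrapping the equation $dE_i = -\omega E_i$ against $\omega \in C^{k-1}$.

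Adapting this variation argument to low regularity is the main obstacle. For $k \geq 2$ the curvature $\Riem_g \in C^{k-2}$ is continuous and the Jacobi and parallel-transport ODEs involved have continuous coefficients, so the argument above runs essentially verbatim via standard ODE theory. For $k = 1$, however, $\Riem_g$ is only a distribution of order $1$ and the identity $\tfrac{D}{dt}\tfrac{D}{ds}E_i = \Riem(\gamma', J) E_i$ does not make classical sense. I would handle this by mollification: smooth $g$ to a family of metrics $g_\varepsilon$ on a common relatively compact neighborhood of $p_0$, perform the radial-transport construction for each $g_\varepsilon$ to obtain smooth frames $(E_i^\varepsilon)$ with a defect $\nabla^{g_\varepsilon} E_i^\varepsilon$ controllable, via an integrated form of the variation formula, by a quantity involving $\Riem_{g_\varepsilon}$ (which tends to $0$ distributionally by hypothesis (i)). Stability of linear ODEs would give $E_i^\varepsilon \to E_i$ locally uniformly, and the defect estimate would pass to a distributional parallelism of the limit frame, from which $C^1$ regularity is recovered via $dE = -\omega E$. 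The technically delicate point, and the true obstacle, is choosing a norm in which the defect estimate for $\nabla^{g_\varepsilon} E_i^\varepsilon$ is jointly compatible with the distributional smallness of $\Riem_{g_\varepsilon}$, so that one can rigorously pass to the limit; this is likely where the approximation and convolution machinery for low-regularity curvature developed in the references cited in the introduction, and in the earlier sections of the paper, comes into play.
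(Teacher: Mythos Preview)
Your treatment of the ``easy'' implications is fine and essentially dual to the paper's: you use the closed coframe and the Poincar\'e lemma for (ii) $\Rightarrow$ (iii), whereas the paper uses the commuting frame and a $C^1$ flow-box/coordinate lemma; these are equivalent. Likewise your (ii) $\Rightarrow$ (i) via the structure equations matches the paper's (iii) $\Rightarrow$ (i) via pullback.

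The substantive gap is in (i) $\Rightarrow$ (ii) for $k=1$. Your construction rests on a normal neighborhood and radial parallel transport along geodesics through $p_0$, but for a $C^1$ metric the Christoffel symbols are merely continuous, so the geodesic equation falls under Peano's theorem: solutions exist but need not be unique (the paper recalls this explicitly, with the Hartman--Wintner examples). Consequently the exponential map need not be single-valued, normal neighborhoods need not exist, and the Jacobi variation $\tfrac{D}{ds}E_i$ you want to differentiate is not well-defined. Mollifying does not rescue this: you can build radial frames $E_i^\varepsilon$ for each smooth $g_\varepsilon$, but the curves along which you transport are $g_\varepsilon$-geodesics, solutions of a \emph{nonlinear} ODE with coefficients converging only in $C^0$, so there is no guarantee that $\exp_{g_\varepsilon}$ (and hence $E_i^\varepsilon$) converges to anything. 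Your appeal to ``stability of linear ODEs'' applies to the parallel transport equation along a fixed curve, not to the underlying family of geodesics.

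The paper sidesteps this by abandoning geodesics altogether: it parallel-transports $v$ successively along the coordinate axes $x^1,\dots,x^n$ of a chart (fixed straight lines, independent of the metric), so only the \emph{linear} transport ODE enters and uniqueness plus $C^1$-dependence on parameters is guaranteed. The iterated construction gives, by design, $\nabla^\varepsilon_{\partial_j} V_\varepsilon = 0$ on the slice $\{x^{j+1}=\dots=x^n=0\}$; one then propagates this inward using $\nabla^\varepsilon_{\partial_i}\nabla^\varepsilon_{\partial_j}V_\varepsilon = \Riem_\varepsilon(\partial_i,\partial_j)V_\varepsilon$ and the crucial fact (Proposition~\ref{Proposition: SmallnessofapproxRiemann} in the paper) that for $g\in C^1$ one has $\Riem_\varepsilon \to 0$ \emph{locally uniformly}, not merely distributionally. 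This uniform smallness is what makes the limit argument clean and is stronger than the distributional convergence you anticipated.
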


The article is structured as follows: In Subsection \ref{Subsection: NotationConventions}, we collect the notations and conventions used throughout. In Section \ref{Section: Preliminary} we review material concerning distributions on manifolds (Subsection \ref{Subsection: distributions}), distributional curvature and regularization (Subsection \ref{Subsection: C1distribcurvat}) and local Sobolev spaces (Subsection \ref{Subsection: localSobolevspaces}). In Section \ref{Section: Rigidity}, we establish the aforementioned rigidity results for low regularity metrics: The Cheeger-Gromoll splitting theorem in Subsection \ref{Section: C1splitting} and the flatness criterion in Subsection \ref{Subsection: Flatness}. As an auxiliary result for the proof of the Cheeger-Gromoll theorem, we give a low regularity version of the Bochner-Weitzenböck formula in Subsection \ref{Subsection: BochnerWeitzenbock}. Section \ref{Section: Syntheticdistributional} is concerned with a discussion of the relationship between synthetic and distributional curvature bounds in low regularity, complementing the results established in \cite{mondino2024equivalence}. In Subsection \ref{Subsection: CompatibilitybetweennotionsofSobolev} we discuss the various notions of Sobolev spaces available in the setting of low regularity metrics (see also the recent preprint \cite{ambrosio2024metric} for more on this topic) and in Subsection \ref{Subsection: Distributional -> synthetic} we give a simpler proof of the fact that distributional Ricci curvature bounds imply the $\mathsf{RCD}$-condition (established in \cite{mondino2024equivalence}) for locally Lipschitz metric tensors using approximation-based techniques. In Subsection \ref{Subsection: Consequencesofequivalence} we collect some consequences for the geometry of low regularity Riemannian metrics that are immediate due to the equivalence between distributional Ricci bounds and the $\mathsf{RCD}$-condition. Finally, in Section \ref{Section: Conclusionandoutlook}, we give a summary of our work and an outlook on possible further related lines of research.

\subsection{Notation and conventions}\label{Subsection: NotationConventions}

Given a function (or distributional) space $\mathcal{F}$ and a bundle $E$ over $M$, we will denote by
\begin{equation*}
    \mathcal{F} (E)
\end{equation*}
the space of sections $s:M\to E$ of regularity $F$. For example
\begin{align*}
    C^\infty(T^*M)&:=\Omega^1(M),\\
    C^1_c(TM)&:=\Gamma_c^{(1)}(M),\\
    \D'(\cT^r_s M)&:= \{ \text{Distributional }(r,s)\text{-tensors}\},\\
    W^{1,2}_\loc(TM)&:=\{W^{1,2}_\loc \text{-vector fields}\},\\
    &\text{etc...}
\end{align*}
Following \cite{petersen2006riemannian},
for the differential operators on a (semi-) Riemannian manifold $(M,g)$ we are going to use below we shall adhere to the following conventions. Let $V,W,X$, $Y_1,...,Y_s$ be vector fields, $\omega$ be a $1$-form, $\eta$ a $2$-form, $A$ a tensor field then
\begin{align*}
    \Div X &= \frac{1}{\sqrt{\abs{g}}}\partial_\alpha({\sqrt{\abs{g}}} X^\alpha ),\\
    \Delta f &= \Div(\nabla f),\\
     \nabla^2_{V,W}A (Y_1,...,Y_s) &= (\nabla\nabla A)(V,W,Y_1,...,Y_s) \\
    &=\nabla_V\nabla_W A(Y_1,...,Y_s) - \nabla_{\nabla_V W}A(Y_1,...,Y_s), \\
    \Hess f &= \nabla(\de f),\\
    \de \omega(V,W) &= [\nabla\omega(V,W)- \nabla \omega(W,V)].
\end{align*}
The codifferential (or adjoint differential) $\delta$ is defined via adjointness: Given a sufficiently regular $k$-form $\eta$, $\delta\eta$ is the $(k-1)$-form such that
\begin{align*}
    \int g( \eta,\de \omega) \,\de\Vol = \int g (\delta\eta,\omega) \,\de\Vol
\end{align*}
for all smooth compactly supported $(k-1)$-forms $\omega$. For convenience we will only write in explicit form the operator for $k=1,2$ and with respect to some orthonormal frame (ONF) $\{e_i\}_i$, which consists of vector fields that are at least locally Lipschitz under our assumptions:
\begin{gather*}
    \delta \omega = -\sum_i \nabla_{e_i}\omega(e_i),\\
    \delta\eta(V) = -\sum_i \nabla_{e_i}\eta(e_i,V). 
\end{gather*}
The above definitions can be generalized to distributional forms assuming the right hand sides are well-defined. 

Similarly, we define by adjointness the adjoint connection $\nabla^*$ of a sufficiently regular  $(r,s)$-tensor field $A$ as the $(r,s-1)$-tensor field $\nabla^*A$ such that
\begin{equation*} 
    \int g( A,\nabla B) \,\de\Vol = \int g (\nabla^* A, B) \,\de\Vol
\end{equation*}
for all smooth compactly supported $(r,s-1)$-tensor fields $B$. Given an ONF $\{e_i\}_i$ this reduces to
\begin{equation*}
    \nabla^*A (Y_1,...,Y_s)= -\sum_i \nabla _{e_i}A(e_i,Y_1,...,Y_s)
\end{equation*}

In order to minimize  ambiguities, we will denote by $g(A,B)$ the function (or distribution)
given by the scalar product between two $(r,s)$-tensor fields $A$ and $B$. The angled brackets notation will be reserved for the evaluation of distributions: for $T\in \D'(M)$ and $\mu \in C^\infty_c(\Vol\, M) $, we denote $\langle T, \mu\rangle := T(\mu)$.

\section{Preliminary material}\label{Section: Preliminary}
In this first preparatory section, we collect background material that will be useful for later parts of the paper. We recall the notion of distributional curvature tensors for low regularity metrics as well as a generic regularization procedure via manifold convolution which will be useful to link conditions on distributional or non-smooth objects to approximate conditions on smooth approximating ones.

\subsection{Distributions and distributional connections}
\label{Subsection: distributions}

Let us give a very brief recap of the definitions of distributions and distributional connections on smooth manifolds. We refer to \cite{LeFlochMardare} and \cite{gkos2001geometricgeneralized} for a more detailed discussion.

Let $M$ be a manifold of dimension $n$ with a smooth atlas $\{(U_\alpha,\psi_\alpha)\}_\alpha$ endowed with a continuous semi-Riemannian metric $g$. Distributions on $M$ are defined as elements of the topological dual of the space of compactly supported smooth sections of the volume bundle (\emph{volume densities)},
\begin{align*}
    \D'(M):= \left[ C_c^\infty(\Vol\, M)\right]',
\end{align*}
where $\Vol \,M$ is the vector bundle of rank $1$ defined by the transition functions
\begin{align*}
    \Psi_{\alpha\beta} (x) = \abs{\det \de(\psi_\beta\circ \psi_\alpha^{-1}) (\psi_\alpha(x))}.
\end{align*}
On a chart we can always represent a smooth volume density as
\begin{align*}
    u \abs{\de x^1 \wedge...\wedge \de x^n} \quad\text{with } u\in C^\infty(U_\alpha).
\end{align*}
If $M$ is oriented, $\Vol\, M$ coincides with $\Lambda^n T^*M$. In general, continuous sections of $\Vol\, M$ are Radon measures and can thus integrate functions on $M$. Any $f\in L^1_{\mathrm{loc}}(M)$ then can naturally be viewed as an element of $\D'(M)$ via integration:
\[
C^\infty_c(\Vol\, M)\ni \mu \longmapsto {\langle f,\mu\rangle }:= \int_M f \,\de\mu.
\]
The presence of a metric $g$ gives us also a way to view any function $f\in C^\infty_c(M)$ (or more  generally $f$ measurable) as a volume density section $f\Vol$, where
\begin{align*}
    \Vol = \sqrt{\abs{\det g}}\abs{\de x^1\wedge...\wedge \de x^n}\in C^0 (\Vol\, M)
\end{align*}
is the canonical volume density induced by $g$. It is important to note, however, that $f\Vol$ may not be a smooth section anymore, thus the evaluation of a distribution on $f\Vol$ would have to be justified in each case. 

We can more generally define distributional $(r,s)$-tensors:
\begin{equation*}
    \D'(\cT^r_s M) := 
    \left[C_c^\infty(\cT^s_r M \otimes \Vol\,M)\right]' \cong \D'(M)\otimes_{C^\infty(M)} C^\infty(\cT^r_s M).
\end{equation*}
The brackets notation $\langle \cdot , \cdot \rangle$ will be also used for distributional tensors: for $T\in \D'(T^r_s M), \omega\in C^\infty_c(T^s_r M \otimes \Vol\, M)$ we write
\begin{equation*}
    \langle T, \omega\rangle := T(\omega).
\end{equation*}

\begin{Definition}
    We call a map $\nabla:C^\infty(TM)\times C^\infty(TM) \to \D'(TM)$, 
    \emph{a distributional connection} if it satisfies
     for all $X,X',Y,Y'\in C^\infty(TM)$ and $f\in C^\infty(M)$
    \begin{enumerate}
        \item $\nabla_{fX+X'}Y = f\nabla_{X}Y+\nabla_{X'}Y$,
        \item $\nabla_X(Y+Y')= \nabla_X Y + \nabla_X Y'$,
        \item $\nabla_X (fY) = X(f) Y + f \nabla_X Y$.
    \end{enumerate}
    Given a functional (or distributional) space $\mathcal{F}$, we say a distributional connection $\nabla$ is an \emph{$\mathcal{F}$-connection} if the image of $\nabla$ is contained in $\mathcal{F}(TM)$.
\end{Definition}
Given a $C^{k,\alpha}_\loc$ (resp. $C^0\cap W^{1,p}_\loc$ with $p\geq 2$) semi-Riemannian metric $g$, the Koszul formula
\begin{align*}
    2g(\nabla_X Y,Z) = &X(g(Y,Z)) + Y(g(X,Z)) - Z(g(X,Y))\\
    &-g(X,[Y,Z]) + g(Y,[Z,X]) + g(Z,[X,Y])
\end{align*}
allows us to identify the unique Levi-Civita $C^{k-1,\alpha}_\loc$-connection (resp. $L^p_\loc$-connection) associated to $g$. The local expression of the covariant derivative
\begin{gather*}
    \nabla_X Y = (X^\beta \partial_\beta Y^\alpha + \Gamma^\alpha_{\beta\gamma}X^\beta Y^\gamma)\partial_\alpha,\\
    \text{with }\Gamma^\alpha_{\beta\gamma}=\frac{1}{2}g^{\alpha\lambda}(\partial_\gamma g_{\lambda \beta }+ \partial_\beta g_{\gamma \lambda }- \partial_\lambda g_{\beta \gamma}),
\end{gather*}
highlights the possibility to uniquely extend the Levi-Civita connection associated to $g$ to lower regularities of the vector fields $X,Y$. Indeed, given three functional (or distributional) spaces $\mathcal{F},\mathcal{G},\mathcal{H}\subseteq \D'(M)$ such that locally $X^\beta \partial_\beta Y^\alpha, \Gamma^\alpha_{\beta\gamma}X^\beta Y^\gamma \in \mathcal{H}$ for all $X\in \mathcal{F}(TM),Y\in \mathcal{G}(TM)$, we can uniquely extend the distributional Levi-Civita connection to a map
\begin{align*}
    \nabla: \mathcal{F}(TM)\times \mathcal{G}(TM) \longrightarrow \mathcal{H}(TM).
\end{align*}

Suppose that, given a metric $g$, the associated Levi-Civita connection
\begin{align*}
    \nabla: C^\infty(TM)\times C^\infty(TM) \longrightarrow \mathcal{F}(TM)
\end{align*}
can be extended to (for suitable $\mathcal{F}$)
\begin{align*}
    \nabla: C^\infty(TM)\times \mathcal{F}(TM) \longrightarrow \D'(TM).
\end{align*}
Then we can define the Riemann and Ricci curvature tensors as distributional tensors
\begin{align*}
    \Riem \in \D'(\cT^1_3 M),\qquad \Ric\in \D'(\cT^0_2 M)
\end{align*}
as follows: Given $X,Y,Z\in C^\infty(TM)$, a local frame $F_i$ in $C^\infty(TM)$ and its dual frame $F^i\in C^\infty(T^*M)$, we define
\begin{align*}
    \Riem(X,Y,Z)&:= \nabla_X \nabla_Y Z - \nabla_Y \nabla_X Z - \nabla_{[X,Y]} Z,\\
    \Ric(X,Z)&:= [\Riem (X,F_i) Z](F^i).
\end{align*}
For example, if $g\in C^0(T^0_2 M) \cap W^{1,2}_\loc(T^0_2 M)$, then $\mathcal{F}= L^2_\loc$.

\subsection{Distributional curvature and regularization}\label{Subsection: C1distribcurvat}

Let us recall the general regularization scheme for non-regular tensors (in particular, metrics) based on local convolution that we shall employ throughout this article. A reference for this material that is suitable for our purposes is \cite[Sec.\ 2]{KOSS22C1HawkingPenrose}.

\begin{Definition}
    Fix a non-negative convolution kernel $\{\rho_\eps\}_{\eps>0}$ on $\R^n$ and a countable atlas $\{(U_\alpha, \psi_\alpha) \}_{\alpha\in \N}$ with relatively compact $U_\alpha$, a subordinate smooth partition of unity $\{\xi_\alpha\}_\alpha$ and functions $\chi_\alpha \in C^\infty_c(M)$ with $0\leq \chi_\alpha\leq 1$ and $\chi_\alpha \equiv 1$ on a neighborhood of $\supp \xi_\alpha $. Then for any $T\in \D'(\cT^r_s M)$ and $\eps>0$ we define the smooth $(r,s)$-tensor
\begin{equation*}
    T \star_M \rho_\eps := \sum_{\alpha\in\N}\chi_\alpha (\psi_\alpha)^*( ((\psi_\alpha)_*(\xi_\alpha T) )* \rho_\eps),
\end{equation*}
where the convolution $((\psi_\alpha)_*(\xi_\alpha T) )* \rho_\eps$ is to be understood in the component-wise sense, recalling that each component is in $\D'(\psi_\alpha (U_\alpha))$.
\end{Definition}

\begin{Proposition}\label{prop:standard_properties_of_starM}
    The following statements hold:
    \begin{enumerate}
        \item If $T \in \D'(\cT^0_2 M)$ is symmetric and satisfies $T\geq 0$, then $T \star_M \rho_\eps\ge 0$ for all $\eps>0$.
        \item If $T\in \D'(\cT^r_s M)$, then 
        \begin{equation*}
            \langle T \star_M \rho_\eps , \mu \rangle \to \langle T,\mu \rangle \qquad \forall \mu\in C^\infty_c(T^s_r M\otimes \Vol\, M).
        \end{equation*}
        \item If $T\in C^k$, for some $k\in \N$, then $T \star_M \rho_\eps \to T$ in $C^k_\loc$.
    \end{enumerate}
\end{Proposition}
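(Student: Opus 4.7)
The plan is to prove all three statements by localising the operator $T \star_M \rho_\eps$ to its chart-wise convolutions and reducing each claim to a standard property of classical convolution on $\R^n$. The partition of unity $\{\xi_\alpha\}$ is locally finite, so when we evaluate against any $\mu \in C^\infty_c(T^s_r M \otimes \Vol\, M)$ only finitely many summands contribute, and similarly for the $C^k_\loc$-statement only finitely many terms are nontrivial on any given compact set. This reduces everything to finitely many local convolutions, each of which lives in a single chart domain $\psi_\alpha(U_\alpha)\subseteq \R^n$.

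For \emph{(i)} I would work in the chart $(U_\alpha, \psi_\alpha)$ and let $(T_{ij})$ be the matrix of chart components of $\xi_\alpha T$; these are distributions with compact support in $\psi_\alpha(U_\alpha)$. The assumption $T\geq 0$ together with $\xi_\alpha\geq 0$ implies that for every constant vector $v\in \R^n$, the scalar distribution $T_{ij}v^iv^j$ is a non-negative Radon measure. Since $\rho_\eps\geq 0$, classical componentwise convolution gives $(T_{ij}*\rho_\eps)(x)\,v^iv^j = \langle T_{ij}v^iv^j, \rho_\eps(x-\cdot)\rangle \geq 0$, so the pointwise matrix $(T_{ij}*\rho_\eps)(x)$ is positive semi-definite at every $x$. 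Pulling back by the diffeomorphism $\psi_\alpha$ preserves positive semi-definiteness of symmetric $(0,2)$-tensors, and multiplication by $\chi_\alpha\geq 0$ preserves it as well. A non-negative sum of positive semi-definite tensors is positive semi-definite, so the assembled tensor $T\star_M \rho_\eps$ is $\geq 0$ on $M$.

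For \emph{(ii)}, fix $\mu\in C^\infty_c(T^s_r M\otimes \Vol\,M)$. Only finitely many indices $\alpha$ contribute to $\langle T\star_M\rho_\eps, \mu\rangle$ because $\supp\mu$ is compact and $\{\xi_\alpha\}$ is locally finite. For each such $\alpha$, I would rewrite the pairing inside the chart as $\langle ((\psi_\alpha)_*(\xi_\alpha T))*\rho_\eps, (\psi_\alpha)_*(\chi_\alpha \mu)\rangle$, where the right-hand entry is a smooth compactly supported density on $\psi_\alpha(U_\alpha)$. The standard fact that $u*\rho_\eps \to u$ in $\mathcal{D}'$ on $\R^n$ then yields convergence to $\langle (\psi_\alpha)_*(\xi_\alpha T), (\psi_\alpha)_*(\chi_\alpha \mu)\rangle = \langle \xi_\alpha T, \chi_\alpha\mu\rangle$. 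Since $\chi_\alpha\equiv 1$ on $\supp \xi_\alpha$, this equals $\langle \xi_\alpha T, \mu\rangle$, and summing over the finitely many contributing $\alpha$ yields $\langle T,\sum_\alpha \xi_\alpha \mu\rangle = \langle T, \mu\rangle$.

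For \emph{(iii)} I would fix a compact set $K\subseteq M$; again only finitely many $\xi_\alpha$ are supported in a neighborhood of $K$. For each such $\alpha$, $(\psi_\alpha)_*(\xi_\alpha T)$ is a compactly supported $C^k$-tensor on $\psi_\alpha(U_\alpha)$, and the classical estimate $f*\rho_\eps \to f$ in $C^k$ uniformly on any compact set provides the required convergence. For $\eps$ small enough the $\eps$-fattening of $\supp(\xi_\alpha T)$ still lies where $\chi_\alpha\equiv 1$, so the outer multiplication by $\chi_\alpha$ is harmless on that fattened set (and identically zero elsewhere), and $C^k_\loc$-convergence is preserved under the pullback by the smooth diffeomorphism $\psi_\alpha$. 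Summing the finitely many terms, $T\star_M\rho_\eps \to \sum_\alpha \xi_\alpha T = T$ in $C^k(K)$. The only point requiring genuine care is the positivity argument in \emph{(i)}, since one must verify that every operation in the definition of $\star_M$—chart pushforward, componentwise convolution, pullback, multiplication by $\chi_\alpha$, and summation—individually preserves the positive semi-definiteness of the tensor, and positivity of a distributional tensor is formulated via test vectors rather than pointwise values.
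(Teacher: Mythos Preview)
Your proposal is correct and follows exactly the approach the paper indicates: the paper's proof consists of a single sentence stating that all claims follow from the definition of $\star_M$ and standard properties of convolution on $\R^n$, and your argument is precisely a careful unpacking of that sentence. In particular, your localisation via the finite partition of unity and the chartwise reduction to classical convolution is the intended route; the extra care you take in (i) to track preservation of positive semi-definiteness through each step of the $\star_M$ construction is a welcome elaboration of a point the paper leaves implicit.
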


\begin{proof} All of these claims follow from the definition of $\star_M$ and standard
properties of convolution on $\R^n$.    
\end{proof}

\begin{Remark}
\label{Remark: Approximatingmetrics}
    Given a continuous semi-Riemannian metric $g$ on $M$, by Proposition \ref{prop:standard_properties_of_starM} (iii), $g\star_M \rho_\eps \to g$ locally uniformly for $\eps\to 0$. Thus for any $K\Subset M$ there is an $\eps_K>0$ such that for all $\eps\in (0,\eps_K]$
we have:
\begin{align}
g\star_M\rho_\eps|_K \text{ is a semi-Riemannian metric of the same signature as $g$}. \label{geps1} 
\end{align}
Since \eqref{geps1} is stable with respect to decreasing $K$ and $\eps$, we may apply \cite[Lem.\ 4.3]{HKS_wave} to obtain a smooth
map $(\eps,x) \mapsto g_\eps(x)$ that possesses this property globally on $(0,1] \times M$ (i.e., each $g_\eps$ is a semi-Riemannian metric on $M$ of the same signature as $g$) and such that, in addition, for every $K\comp M$ there exists some $\eps_K>0$ with $g_\eps(x) = g\star_M\rho_\eps(x)$ for all $(\eps,x)\in (0,\eps_K]\times K$. 

In the special case (most relevant for our purposes here) of $g$ being Riemannian, \eqref{geps1} is automatically satisfied globally (since local convolution with $\rho_\eps \ge 0$ preserves positive definiteness). In this signature, we additionally have that for each $K\Subset M$ there is an $\eps_K>0$ such that for all $\eps\in (0,\eps_K]$
\begin{equation}\label{geps2} 
\frac{1}{2}\abs{v}_g \le \abs{v}_{g\star_M\rho_\eps} \le 2\abs{v}_g \quad \forall v\in TM|_K,
\end{equation}
and again applying \cite[Lem.\ 4.3]{HKS_wave} we construct Riemannian metrics $g_\eps$ satisfying 
\begin{equation}\label{geps3} 
\frac{1}{2}\abs{v}_g \le \abs{v}_{g_\eps} \le 2\abs{v}_g \quad \forall v\in TM,
\end{equation}
Then in particular, for any  $C^1$-curve $\gamma$ connecting two points in $M$, for its lengths with respect to $g$ resp.\ $g_\eps$ we have $\frac{1}{2}L_g(\gamma) \le L_{g_\eps}(\gamma) \le 2L_g(\gamma)$, from which it follows that for the corresponding Riemannian distances
we have:
\begin{equation}\label{eq:distances_g_geps}
    \frac{1}{2} d_g(x,y) \le d_{g_\eps}(x,y) \le 2d_g(x,y) \qquad \forall x, y \in M.
\end{equation}
In particular, for the corresponding metric balls this gives:
\begin{equation}\label{eq:balls_g_geps}
B^{d_g}_{r/2}(x) \subseteq B^{d_{g_\eps}}_r(x) \subseteq B^{d_g}_{2r}(x)
\qquad (x\in M, r>0).
\end{equation}
\end{Remark}

Let us fix a sequence $\eps_i \searrow 0$. We will denote $g_i = g_{\eps_i}$, $\rho_i:=\rho_{1/i}$ and all the related objects will have the subscript index $i$, e.g.\ we shall write $d_i$ for the Riemannian distances with respect to $g_i$, $\Vol_i$ for the $g_i$-volume measures,  etc. Quantities without this subscript will always refer to the metric $g$ itself, e.g., $\Ric$ is the Ricci tensor of $g$. By the above construction, for compact subset $K\Subset M$, $g_i|_K = (g\star_M \rho_i)|_K$ for all large $i$.

The following result collects some known convergence properties of $\star_M$-regularizations of metrics that we shall make use of below.
\begin{Proposition}\label{prop:LpRicConvergence}
    Let $(M,g)$ be a
    $C^\infty$-manifold with semi-Riemannian metric 
    $g\in C^{0,1}_\loc(\cT^0_2 M)$. 
    Then
    \begin{itemize}
    \item[(i)] ${\Ric_i - \Ric \star_M \rho_{i}}\to 0 
    \quad \text{in }L^p_\loc(\cT^0_2 M) \text{ for all }p\in [1,\infty).$ 
    If $g\in C^1(\cT^0_2 M)$, then the convergence is even locally uniform.
    \item[(ii)] For $X, Y \in C^{\infty}(TM)$, 
    $\Ric_i(X,Y) - \Ric(X,Y)\star_M \rho_{i} \to 0 \quad \text{in } L^p_{\mathrm{loc}}(M)$.
    \end{itemize}
    \begin{proof} All these claims, except for the stronger convergence result in (i), are shown in \cite{CGHKS24}. For the latter we refer to \cite[Lem.\ 4.5]{graf2020singularity}. 
    \end{proof}   
\end{Proposition}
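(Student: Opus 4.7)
The plan is to localize to a chart and reduce the claim to classical Friedrichs-type commutator estimates for mollifiers. In local coordinates the Ricci tensor has the schematic form
\begin{equation*}
    \Ric_g = \partial\Gamma + \Gamma\cdot\Gamma,\qquad \Gamma = \tfrac12\, g^{-1}\cdot\partial g,
\end{equation*}
so its coefficients are polynomials in $g^{-1}$ (which lies in $W^{1,\infty}_\loc$ because $g\in C^{0,1}_\loc$), in $\partial g\in L^\infty_\loc$, and in the distribution $\partial^2 g$ of order $\le 1$. The key structural input is Remark~\ref{Remark: Approximatingmetrics}: for any relatively compact chart $U$ and compact $K\Subset U$, for all sufficiently large $i$ one has $g_i|_K = (g\star\rho_i)|_K$ in the plain chart-wise sense, so $\partial^\alpha g_i = (\partial^\alpha g)\star\rho_i$ on $K$ for every multi-index $\alpha$.

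On $K$ I then decompose $\Ric_i - \Ric\star_M\rho_i$ algebraically into a finite sum of commutators
\begin{equation*}
    [a,\rho_i\star]b := a\cdot(b\star\rho_i) - (a\,b)\star\rho_i,
\end{equation*}
plus inversion corrections coming from the difference between $g_i^{-1}$ and $(g^{-1})\star\rho_i$. The quadratic $\Gamma\cdot\Gamma$-terms produce commutators with $a,b\in L^\infty_\loc$, to which the classical Friedrichs lemma applies and yields $L^p_\loc$-convergence to $0$ for every finite $p$. The linear $\partial\Gamma$-term involves the distributional $\partial^2 g$; here I use the Leibniz identity $g^{-1}\cdot\partial^2 g = \partial(g^{-1}\cdot\partial g) - (\partial g^{-1})\cdot\partial g$ inside both $\Ric_i$ and $\Ric\star_M\rho_i$, so that after convolution the $\partial$-terms match exactly (convolution commutes with $\partial$ on $K$) and only commutators of the preceding type remain. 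Finally, the inversion correction $g_i^{-1} - (g^{-1})\star\rho_i$ is $O(1/i)$ in $L^\infty_\loc$ by the Lipschitz bound on $g$ combined with smoothness of matrix inversion, and multiplying by the $L^p_\loc$-bounded remaining factors gives a contribution vanishing in $L^p_\loc$. This proves (i); claim (ii) follows at once by contracting with the smooth bounded fields $X,Y$, whose components enter only as smooth coefficients to which the same commutator estimates apply.

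The main obstacle I anticipate is the algebraic bookkeeping required to realize the cancellation between the distributional $\partial^2 g$-contribution and the matching $\partial g\cdot\partial g$-contribution hidden in the $\Gamma\cdot\Gamma$-terms, so that after the dust settles only commutators amenable to the classical Friedrichs lemma remain; together with the need to carefully separate the globally defined operator $\star_M$ from its chart-local counterpart $\star$ (the partition-of-unity and cutoff factors $\xi_\alpha,\chi_\alpha$ are smooth and bounded, so they contribute only further harmless Friedrichs commutators, but they must be tracked). The upgrade to locally uniform convergence in the $C^1$ case is then routine: when $\partial g$ is continuous, the relevant Friedrichs estimates sharpen from $L^p_\loc$ to $C^0_\loc$-convergence, and this improvement propagates through the same finite sum. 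The detailed commutator analysis is worked out in the cited references.
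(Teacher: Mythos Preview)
Your approach is correct and is precisely what the cited references do; the paper's own proof consists entirely of pointers to \cite{CGHKS24} and \cite[Lem.~4.5]{graf2020singularity}, so there is nothing further to compare against.

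One imprecision worth flagging: after the Leibniz rewriting the $\partial$-terms do \emph{not} ``match exactly''. You still face
\[
\partial\bigl[\,g_i^{-1}\,\partial g_i - (g^{-1}\,\partial g)\star\rho_i\,\bigr],
\]
i.e.\ the \emph{derivative} of a Friedrichs commutator plus an inversion correction, and the naive estimate $|g_i^{-1}-g^{-1}|\cdot|\partial^2 g_i|=O(\eps_i)\cdot O(\eps_i^{-1})$ gives only $O(1)$. What rescues the argument is the structural fact that the coefficient of $\partial^2 g$ in $\Ric$ depends on $g^{-1}$ alone (not on $\partial g$), hence is genuinely $C^{0,1}_\loc$; for Lipschitz $a$ and $u\in L^\infty_\loc$ the commutator $a\cdot((\partial u)\star\rho_\eps)-(a\,\partial u)\star\rho_\eps$ does go to $0$ in every $L^p_\loc$, $p<\infty$ (and in $C^0_\loc$ if $u\in C^0$). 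This refined Friedrichs lemma is exactly the content of the cited references, so your closing remark that ``the detailed commutator analysis is worked out in the cited references'' is accurate --- but be aware that \emph{this} step, not the bookkeeping, is where the actual work lies.
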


\subsection{Local Sobolev spaces of integer order}
\label{Subsection: localSobolevspaces}
Let us recall basic properties of local Sobolev spaces on domains of $\R^n$. We refer to \cite{antbur2005sobloc} for more details.

Given an open domain $\Omega\subseteq \R^n$, in the space of distributions $\D '(\Omega)$ we can identify, for all $m\in \N\setminus \{0\}$ and $p\in(1,\infty)$, the subspaces
\begin{equation*}
	W^{-m,p}(\Omega):= (W_0^{m,\frac{p}{p-1}} (\Omega))' \cong \left\{\sum_{\abs{\alpha}\leq m} D^\alpha f_\alpha \in \D'(\Omega) : f_\alpha \in L^p(\Omega) \right\}\subseteq \D '(\Omega).
\end{equation*}
An important property for our investigations in this paper is the possibility to multiply distributions in $W^{-m,p}(\R^n)$ with functions in $W^{h,q}(\R^n)$, for appropriate $h\in \N$ and $q\in (1,\infty)$. In particular we will use the following fact.
\begin{Remark}\label{rk:W-1pMultipl}
    For distributions in $W^{-1,1+\frac{1}{n}}(\R^n)$, the multiplication with smooth functions
    \begin{align*}
        C^\infty(\R^n) \times W^{-1,1+\frac{1}{n}}(\R^n) \longrightarrow \D'(\R^n)
    \end{align*}
    can be extended to a continuous bilinear map
    \begin{align*}
        W^{1,n+1}(\R^n) \times W^{-1,1+\frac{1}{n}}(\R^n) \longrightarrow W^{-1,1+\frac{1}{n}}(\R^n).
    \end{align*}
    A proof can be found in \cite[Thm.\ A.1]{behholst2021sobolevmultiplication}. 

\end{Remark}
We also recall the following definition, for $k\in\Z$ and $p\in (1,\infty)$,
\begin{equation*}
    W^{k,p}_\loc(\Omega):=\{f\in \D'(\Omega): \forall \varphi\in C^\infty_c(\Omega),\, \varphi f \in W^{k,p}(\Omega) \}.
\end{equation*}
In order to extend these definitions to Riemannian manifolds, we will simply ask for a characterization through charts, given a smooth atlas $\{(U_\alpha,\psi_\alpha) \}_{\alpha\in \N} $.
\begin{equation*}
    W^{k,p}_\loc(M):=\{f\in \D'(M): \forall \alpha \in \N,\, (\psi_\alpha)_* f \in W^{k,p}_\loc(\psi_\alpha(U_\alpha))\},
\end{equation*}
where the \textit{pushforward} $(\psi_\alpha)_* f\in \D'(\psi_\alpha(U_\alpha))$ is defined, given $\psi_\alpha=(x_1,...,x_n)$, as
\begin{align*}
    (\psi_\alpha)_* f:\quad \vphi \longmapsto f\left(\vphi\circ \psi_\alpha \abs{\de x^1\wedge...\wedge\de x^n}\right).
\end{align*}
We also note the following equivalent characterization.
\begin{gather*}
    W^{k,p}_\loc(M):=\{f\in \D'(M):\forall \alpha \in \N,\, \forall \chi\in C^\infty_c(U_\alpha),\, (\psi_\alpha)_*(\chi f) \in W^{k,p} (\R^n) \}.
\end{gather*}

We can further extend these spaces of scalar distributions to (sections of) tensor bundles via tensorization.
\begin{gather*}
    W^{k,p}_\loc(\cT^r_s M):=W^{k,p}_\loc(M)\otimes_{C^\infty (M)} C^\infty (\cT^r_s M).
\end{gather*}
In simpler terms, $T\in W^{k,p}_\loc(\cT^r_s M)$ whenever it is an $(r,s)$-tensor with coefficients in $W^{k,p}_\loc$.\\
From now on we will use the following notation:
\begin{align*}
	W^{m,q^-}(X):= \bigcap_{1\leq p <q} W^{m,p}(X)
\end{align*}
where $q\in[1,\infty]$, $m\in \N$ and $X$ could either be a Euclidean space, an open domain, a Riemannian manifold or a tensor bundle. Analogous definitions hold for $L^{q^-}(X)$ and $W^{m,q^-}_\loc(X)$. Observe that $C^{0,1}_{\loc}(\cT^r_s M)) = W^{1,\infty}_{\loc}(\cT^r_s M)) \subseteq W^{1,\infty^-}_{\loc}(\cT^r_s M))$ by Rademacher's theorem.

\section{Rigidity for non-smooth (semi-)Riemannian manifolds}
\label{Section: Rigidity}
In this main section of the article, we first establish a Bochner-Weizenböck identity where we allow both the metric and the vector fields to be non-smooth, cf.\ Theorem \ref{thm:BochWeitz}. This is then used to prove a version of the Cheeger-Gromoll splitting theorem for Riemannian metrics of $C^1$-regularity (see Theorem \ref{thm:SplittingTheorem}), where we are able to establish $C^2$-regularity of the splitting isometry. We clarify the relationship between the metric regularity and the regularity of the isometry in Theorem \ref{thm: higherregularitysplitting}. In the last subsection, we prove the equivalence of the usual notions of flatness (vanishing of the Riemann tensor, existence of local parallel orthonormal frames, and local isometry to flat space) for metrics of regularity $C^1$. Here, no properties of the Riemannian signature are used, we work in the general semi-Riemannian setting. 

\subsection{The Bochner-Weitzenb\"ock identity for $C^{0,1}_\loc$-Riemannian metrics}
\label{Subsection: BochnerWeitzenbock}
In this subsection, we present a low regularity version of the generalized Bochner-Weitzenböck formula, which extends \cite[Prop.\ 3.9]{mondino2024equivalence} (in the case of Lipschitz metrics) to non-smooth vector fields. We need a preparatory Lemma on covariant differentiation of non-regular vector fields.
\begin{Lemma}\label{lem:W-1pCovar}
	Let $(M,g)$ be a Riemannian manifold with $g \in C^{0,1}_{\loc}(\cT^0_2 M )$. For $V\in L^{\infty^-}_\loc(TM)$ and $W\in W^{1,\infty^-}_\loc(TM)$, it holds that
	\begin{equation*}
		\nabla_V W \in L^{\infty^-}_\loc(TM),\qquad  \nabla_W V \in W^{-1,1+ \frac{1}{n}}_\loc(TM).
	\end{equation*}
 Moreover, for $X\in W^{-1,1+ \frac{1}{n}}_\loc(TM)$ and $Y\in W^{1,\infty^-}_\loc(TM)$, their scalar product is a well defined as a distribution
 \begin{equation*}
     g( X, Y ) \in W^{-1,1+ \frac{1}{n}}_\loc(M).
 \end{equation*}
\end{Lemma}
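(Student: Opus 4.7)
The plan is to reduce everything to a local computation in a smooth chart and then read off the regularity of $\nabla_V W$, $\nabla_W V$ and $g(X,Y)$ from the explicit coordinate formulas, exploiting (a) the fact that $g\in C^{0,1}_\loc$ forces $\Gamma^\alpha_{\beta\gamma}\in L^\infty_\loc$, (b) Morrey embedding $W^{1,p}_\loc\hookrightarrow C^0_\loc$ for $p>n$ (so that $W^{1,\infty^-}_\loc\subseteq L^\infty_\loc$), and (c) the distributional multiplication rule of Remark~\ref{rk:W-1pMultipl}, whose exponents $n+1$ and $1+\tfrac{1}{n}$ are Hölder conjugate to each other. Chart–independence of the final conclusion is automatic because the spaces $W^{k,p}_\loc$ are defined via charts and smooth transition functions preserve them.

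For $\nabla_V W$, I would use
\[
\nabla_V W \;=\; \bigl(V^\beta\,\partial_\beta W^\alpha + \Gamma^\alpha_{\beta\gamma}V^\beta W^\gamma\bigr)\,\partial_\alpha .
\]
Since $V^\beta\in L^{\infty^-}_\loc$ and $\partial_\beta W^\alpha\in L^{\infty^-}_\loc$, Hölder's inequality with $1/r=1/p+1/q$ (choosing $p=q=2r$ for arbitrary finite $r$) gives $V^\beta\partial_\beta W^\alpha\in L^{\infty^-}_\loc$. For the second term, $\Gamma^\alpha_{\beta\gamma}\in L^\infty_\loc$ and $W^\gamma\in L^\infty_\loc$, so $\Gamma^\alpha_{\beta\gamma}V^\beta W^\gamma\in L^{\infty^-}_\loc$. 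This yields the first claim.

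For $\nabla_W V = \bigl(W^\beta\,\partial_\beta V^\alpha + \Gamma^\alpha_{\beta\gamma}W^\beta V^\gamma\bigr)\partial_\alpha$ the harder term is the transport part. Here $V^\alpha\in L^{1+1/n}_\loc$ gives $\partial_\beta V^\alpha\in W^{-1,1+1/n}_\loc$, while $W^\beta\in W^{1,\infty^-}_\loc\subseteq W^{1,n+1}_\loc$. Remark~\ref{rk:W-1pMultipl} then yields $W^\beta\partial_\beta V^\alpha\in W^{-1,1+1/n}_\loc$. The Christoffel term satisfies $\Gamma^\alpha_{\beta\gamma}W^\beta V^\gamma\in L^{\infty^-}_\loc\subseteq L^{1+1/n}_\loc\hookrightarrow W^{-1,1+1/n}_\loc$, which handles the second claim.

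For the scalar product I would first observe that $g_{\alpha\beta}Y^\beta\in W^{1,\infty^-}_\loc$: indeed, by the Leibniz rule $\partial(g_{\alpha\beta}Y^\beta) = (\partial g_{\alpha\beta})Y^\beta + g_{\alpha\beta}\partial Y^\beta$, where the first summand lies in $L^\infty_\loc$ (as a product of $L^\infty_\loc$ quantities) and the second in $L^{\infty^-}_\loc$, while $g_{\alpha\beta}Y^\beta\in L^\infty_\loc$ itself. In particular $g_{\alpha\beta}Y^\beta\in W^{1,n+1}_\loc$, and another application of Remark~\ref{rk:W-1pMultipl} gives $g(X,Y) = (g_{\alpha\beta}Y^\beta)X^\alpha\in W^{-1,1+1/n}_\loc$. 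The one step that is not merely bookkeeping is the multiplication in the third displayed statement (and in the $\nabla_W V$ computation), which really requires the nontrivial bilinear extension in Remark~\ref{rk:W-1pMultipl}; everything else is Hölder and Leibniz.
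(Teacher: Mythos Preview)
Your proof is correct and follows essentially the same route as the paper's: reduce to a chart, read off the regularity of each term in the coordinate formula for the covariant derivative, and invoke Remark~\ref{rk:W-1pMultipl} for the only nontrivial products $W^\beta\partial_\beta V^\alpha$ and $(g_{\alpha\beta}Y^\beta)X^\alpha$. The paper is slightly terser (it just notes that $L^{\infty^-}$ is closed under pointwise products and that $W^{1,\infty}_\loc\cdot W^{1,\infty^-}_\loc\subseteq W^{1,\infty^-}_\loc$), and it does not invoke Morrey's embedding---the inclusion $W^{1,\infty^-}_\loc\subseteq L^{\infty^-}_\loc$ already suffices everywhere you used $L^\infty_\loc$---but the arguments coincide.
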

 \begin{proof} Since these claims are local, we may without loss of generality suppose that 
 all vector fields are compactly supported and $M=\R^n$.
	Recall that in smooth local coordinates
	\begin{align*}
		(\nabla_V W )^\beta= V^\alpha(\partial_\alpha W^\beta + \Gamma_{\alpha\gamma}^\beta W^\gamma).
	\end{align*}
	Observing that $\Gamma_{\alpha\gamma}^\beta \in L^\infty_{\mathrm{loc}}(\R^n)$ and that $L^{\infty^-}(\R^n)$ is closed with respect to the pointwise product, we conclude that $(\nabla_V W)^\beta \in L^{\infty^-}(\R^n)$.\\
	Similarly, applying Remark \ref{rk:W-1pMultipl} to	\begin{align*}
		(\nabla_W V )^\beta= W^\alpha(\partial_\alpha V^\beta + \Gamma_{\alpha\gamma}^\beta V^\gamma).
	\end{align*}
we see that $W^\alpha\partial_\alpha V^\beta \in W^{-1,1+\frac{1}{n}}(\R^n)$, while $W^\alpha\Gamma_{\alpha\gamma}^\beta V^\gamma \in L^{\infty^-}(\R^n)$. We conclude by recalling that
\begin{equation*}
    L^{\infty^-}(\R^n)\subseteq L^{1+\frac{1}{n}}(\R^n) \cong (L^{n+1}(\R^n))' \subseteq (W^{1, n+1}(\R^n))' = W^{-1,1+\frac{1}{n} }(\R^n).
\end{equation*}
For the second statement observe that $g( X,Y ) = X^\alpha Y^\beta g_{\alpha\beta}$. Here $Y^\beta\in W^{1,\infty^-}(\R^n)$ and $g_{\alpha\beta}\in W^{1,\infty}_{\mathrm{loc}}(\R^n)$, thus $Y^\beta g_{\alpha\beta}\in W^{1,\infty^-}(\R^n)$. Thus another appeal to Remark \ref{rk:W-1pMultipl} concludes the proof.
\end{proof}
Let us denote here the Hodge Laplacian on 1-forms by $\Delta_H = \delta \de + \de\delta$. We will also extend the operator $\Delta_H$ to vector fields in the following way:
\begin{equation*}
	\Delta_H X = (\Delta_H X^\flat)^\sharp,
\end{equation*}
where
\begin{equation*}
    \flat: TM \longrightarrow T^*M,\qquad \sharp: T^*M \longrightarrow TM
\end{equation*}
are the Riesz musical isomorphisms, which are local Lipeomorphisms if the metric is locally Lipschitz. For the Laplace-Beltrami operator on scalar functions we will follow the convention $\Delta f = -\delta df$.

\begin{Theorem}[Generalized Bochner-Weitzenböck identity]\label{thm:BochWeitz}
    Let $(M,g)$ be a Riemannian manifold with metric $g \in C^{0,1}_{\loc}(\cT^0_2 M)$. Then for all vector fields $X\in W^{1,\infty^-}_\loc(TM)$ the following identity holds in $W^{-1,1+\frac{1}{n}}_{\loc}(M) \subseteq \mathcal{D}'(M)$: 
	\begin{equation}\label{eq:BochWeitz}
    	\Ric(X,X)= \Delta \frac{\abs{X}^2}{2} - \abs{\nabla X}^2 +
            g( \Delta_H X,X).
	\end{equation}
\end{Theorem}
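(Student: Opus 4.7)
The plan is to reduce to the case of smooth vector fields $X$, for which the identity was established by Mondino--Ryborz in \cite[Prop.~3.9]{mondino2024equivalence}, and then to extend by density via mollification of $X$, using Remark \ref{rk:W-1pMultipl} and Lemma \ref{lem:W-1pCovar} to ensure that each term in \eqref{eq:BochWeitz} depends continuously on $X$ in the appropriate topology. Before extending, one should verify that all four terms are indeed well defined as elements of $W^{-1,1+\frac{1}{n}}_\loc(M)$: since $g \in C^{0,1}_\loc$ gives $\Gamma^\alpha_{\beta\gamma} \in L^\infty_\loc$, the Ricci tensor lies in $W^{-1,\infty^-}_\loc(\cT^0_2 M) \subseteq W^{-1,1+\frac{1}{n}}_\loc(\cT^0_2 M)$, and then $\Ric(X,X)$, $\Delta\abs{X}^2/2$, $\abs{\nabla X}^2$ and $g(\Delta_H X, X)$ are all well defined thanks to Remark \ref{rk:W-1pMultipl} combined with Lemma \ref{lem:W-1pCovar}.

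Next, I would regularize by setting $X_\eta := X \star_M \rho_\eta$. Standard Sobolev mollification in charts (applied to the partition-of-unity construction defining $\star_M$) yields $X_\eta \to X$ in $W^{1,p}_\loc(TM)$ for every finite $p$, since $X \in W^{1,\infty^-}_\loc = \bigcap_{p<\infty} W^{1,p}_\loc$. Each $X_\eta$ being smooth, \cite[Prop.~3.9]{mondino2024equivalence} applied to the Lipschitz metric $g$ delivers the distributional identity
\begin{equation*}
    \Ric(X_\eta, X_\eta) = \Delta \frac{\abs{X_\eta}^2}{2} - \abs{\nabla X_\eta}^2 + g(\Delta_H X_\eta, X_\eta).
\end{equation*}

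Finally, I would pass to the limit $\eta \to 0^+$ in each term, checking convergence in $W^{-1,1+\frac{1}{n}}_\loc(M)$. The left-hand side converges via the continuous bilinear map of Remark \ref{rk:W-1pMultipl}, applied componentwise to $X_\eta \otimes X_\eta \to X \otimes X$ in $W^{1,n+1}_\loc$. The term $\Delta\abs{X_\eta}^2/2$ converges in $W^{-1,p}_\loc$ for $p < \infty$, since $\abs{X_\eta}^2 = g(X_\eta, X_\eta) \to \abs{X}^2$ in $W^{1,p}_\loc$ and $\Delta$ is continuous $W^{1,p}_\loc \to W^{-1,p}_\loc$ with Lipschitz coefficients. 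The term $\abs{\nabla X_\eta}^2$ converges in $L^{p/2}_\loc$ because $\nabla X_\eta \to \nabla X$ in $L^p_\loc$ by Lemma \ref{lem:W-1pCovar}. Similarly, $g(\Delta_H X_\eta, X_\eta) \to g(\Delta_H X, X)$ in $W^{-1,1+\frac{1}{n}}_\loc$ by another application of Remark \ref{rk:W-1pMultipl}, once one notes that $\Delta_H X_\eta \to \Delta_H X$ in $W^{-1,p}_\loc$ for all $p < \infty$ thanks to the analogous continuity of the Hodge Laplacian with Lipschitz coefficients. The main obstacle is the careful bookkeeping of Sobolev exponents needed to multiply a negative-order distribution such as $\Ric$ with a Sobolev tensor like $X \otimes X$, which is exactly what the multiplication theorem of Remark \ref{rk:W-1pMultipl} supplies.
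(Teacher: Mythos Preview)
Your approach is correct but takes a genuinely different route from the paper's. The paper proves the identity by direct computation: it fixes a local $C^{0,1}_\loc$ orthonormal frame $\{e_i\}$, expands $d\delta X^\flat$ and $\delta d X^\flat$ explicitly in terms of iterated covariant derivatives, verifies a cancellation identity $\sum_i(\nabla_{e_i}X^\flat(\nabla_Y e_i) + \nabla_{\nabla_Y e_i}X^\flat(e_i)) = 0$, and combines the pieces to obtain $g(\Delta_H X,X) = -g(\sum_i\nabla^2_{e_i,e_i}X,X) + \Ric(X,X)$, after which $g(\sum_i\nabla^2_{e_i,e_i}X,X) = \Delta\tfrac{|X|^2}{2} - |\nabla X|^2$ is shown directly. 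Each manipulation is justified at the distributional level via Lemma~\ref{lem:W-1pCovar} and Remark~\ref{rk:W-1pMultipl}. Your route---quoting \cite[Prop.~3.9]{mondino2024equivalence} for smooth $X$ and extending by mollification---is more economical and leverages existing literature, whereas the paper's computation is self-contained (it does not rely on \cite{mondino2024equivalence}) and makes transparent why the identity holds term by term at this regularity.

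One point in your sketch deserves more care: the claim that $\Delta_H X_\eta \to \Delta_H X$ in $W^{-1,p}_\loc$ for all $p<\infty$ ``thanks to the analogous continuity of the Hodge Laplacian with Lipschitz coefficients'' understates what is needed. The Hodge Laplacian on $1$-forms is \emph{not} a second-order operator with merely Lipschitz coefficients; its zeroth-order part involves derivatives of the Christoffel symbols (equivalently, curvature), which for a Lipschitz metric lie only in $W^{-1,\infty^-}_\loc$. Hence the correct mapping property is $\Delta_H\colon W^{1,n+1}_\loc(T^*M) \to W^{-1,1+1/n}_\loc(T^*M)$ continuously, and this follows precisely from Remark~\ref{rk:W-1pMultipl}, not from standard elliptic theory with Lipschitz coefficients. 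This weaker convergence is still sufficient for your limit argument, but the dependence on the multiplication theorem is exactly the same as in the paper's direct computation---so in the end both approaches hinge on the same Sobolev-multiplication bookkeeping, just organized differently.
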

\begin{proof}
    Let us argue locally by fixing an ONF $\{e_i\}_i$, which consists of $C^{0,1}_\loc$-vector fields. Lemma \ref{lem:W-1pCovar} ensures that all objects in this proof are well-defined.\\
    We first compute $\de\delta X^\flat$ explicitly, evaluated on an arbitrary $W^{1,\infty^-}_\loc$-vector field $Y$. Observe that since $g\in W^{1,\infty}_\loc(\cT^0_2 M)$ and $X\in W^{1,\infty^-}_\loc(TM)$, then $X^\flat \in W^{1,\infty^-}_\loc (T^*M)$. 
    \begin{align*}
        \de\delta X^\flat (Y)&= \nabla_Y\Big(-\sum_i \nabla_{e_i} X^\flat (e_i)\Big) = -\sum_i \big(\nabla_Y (\nabla_{e_i} X^\flat)(e_i) +\nabla_{e_i} X^\flat (\nabla_Y e_i) \big)\\
        &=-\sum_i \big(\nabla^2_{Y,e_i}X^\flat(e_i) + \nabla_{e_i} X^\flat (\nabla_Y e_i) +\nabla_{\nabla_Y e_i} X^\flat (e_i)\big).
    \end{align*}
    Now we claim that 
    \begin{align*}
        \sum_i \big(\nabla_{e_i}  X^\flat (\nabla_Y e_i) +\nabla_{\nabla_Y e_i} X^\flat (e_i)\big) =0 .
    \end{align*}
    Indeed up to $L^1_\loc$-linearity observe that for $\nabla X^\flat = \alpha \otimes \beta$
    \begin{align*}
        &\sum_i \alpha\otimes\beta (e_i, \nabla_Y e_i) + \alpha\otimes\beta (\nabla_Y e_i , e_i) = \sum_i \alpha\otimes\beta ( \nabla_Y (e_i\otimes e_i))\\
        &= \nabla_Y\left[\sum_i \alpha\otimes\beta (e_i, e_i) \right] -\sum_i(\nabla_Y \alpha)\otimes \beta(e_i,e_i) -\sum_i \alpha\otimes(\nabla_Y \beta) (e_i,e_i)\\
        &=\nabla_Y (\alpha (\beta^\sharp)) - (\nabla_Y \alpha)(\beta^\sharp) - \alpha(\nabla_Y \beta^\sharp)=0.
    \end{align*}
    Let us denote by $S^T$ the transposed tensor of the $(0,2)$-tensor $S$, i.e. $S^T(V,W):= S(W,V)$. We can then compute
    \begin{align*}
        \delta\de X^\flat (Y) &= \delta\left[(V,W)\mapsto \nabla X^\flat (V,W) - \nabla X^{\flat}(W,V) \right] (Y)\\
        &=-\sum_i \nabla_{e_i} ( \nabla X^\flat - \nabla X^{\flat T})(e_i, Y) \\
        &=-\sum_i \nabla_{e_i} (\nabla X^\flat)(e_i, Y) + \sum_i \nabla_{e_i} ( \nabla X^{\flat})(Y,e_i) \\
        &= -\sum \nabla^2_{e_i,e_i} X^\flat (Y) + \sum_i \nabla^2_{e_i,Y} X^\flat (e_i).
    \end{align*}
    Adding together the two previous results, with $Y=X$, we get
    \begin{align*}
        \Delta_H X^\flat (X) = - \sum_i \nabla^2_{e_i,e_i}X^\flat (X) + \sum_i (\nabla^2_{e_i,X} X^\flat (e_i) - \nabla^2_{X,e_i} X^\flat (e_i)),
    \end{align*}
    which can be rewritten as
    \begin{align*}
        g( \Delta_H X, X) = -g( \sum_i \nabla^2_{e_i,e_i}X ,X) + \Ric(X,X).
    \end{align*}
    Indeed it holds that $\nabla V^\flat = (\nabla V)^\flat$ for $V\in L^{\infty^-}_\loc(TM)$, which is to be understood as a distributional identity. This can be verified in coordinates:
\begin{align*}
    &[\nabla V^\flat - (\nabla V)^\flat]_{\alpha\beta}\\
    &=\partial_\alpha (g_{\beta i} V^i) - \Gamma^j_{\alpha\beta}g_{ji}V^i - g_{\beta i}\partial_\alpha V^i -\Gamma_{\alpha i}^j g_{\beta j}V^i\\
    &= \frac{V^i}{2}\left( 2\partial_\alpha g_{\beta i} -\partial_\alpha g_{\beta i} -\partial_\beta g_{\alpha i} +\partial_i g_{\alpha\beta } -\partial_\alpha g_{\beta i} -\partial_i g_{\alpha \beta} +\partial_\beta g_{\alpha i}\right)\\
    &=0.
\end{align*}
Apart from $g_{\beta i} \partial_\alpha V^i$, which is admissible by Remark \ref{rk:W-1pMultipl}, all the above are pointwise multiplications of functions.\\
    Now observe that
    \begin{align*}
        g( \sum_i \nabla^2_{e_i,e_i}X ,X) 
        &= \sum_i (g(  \nabla_{e_i}\nabla_{e_i}X,X) -g(\nabla_{\nabla_{e_i}e_i}X,X))\\
        &=\sum_i (\nabla_{e_i}g(\nabla_{e_i}X,X) - g( \nabla_{e_i}X, \nabla_{e_i}X) -g(\nabla_{\nabla_{e_i}e_i}X,X))\\
        &=\sum_i \nabla_{e_i}\left[V\mapsto g(\nabla_V X,X )\right] (e_i) - \abs{\nabla X}^2\\
        &=\sum_i \nabla_{e_i}(\de \frac{\abs{X}^2}{2}) (e_i) - \abs{\nabla X}^2\\
        &= \Delta \frac{\abs{X}^2}{2} - \abs{\nabla X}^2.
    \end{align*}
\end{proof}

\begin{Remark}
    Observe that by polarization of (\ref{eq:BochWeitz}) we obtain a complete characterization of $\Ric(X,Y)$ for $X,Y \in W^{1,\infty^-}_\loc(TM)$:
    \begin{align*}
        \Ric(X,Y) &= \frac{1}{4}\left(\Ric(X+Y,X+Y)- \Ric(X-Y,X-Y)\right)\\
        &= \Delta \frac{g( X,Y)}{2} - g( \nabla X ,\nabla Y) +\frac{1}{2}\left(g( \Delta_H X,Y) + g( \Delta_H Y,X)\right).
    \end{align*}
    Additionally, in the proof of Theorem \ref{thm:BochWeitz} we obtained the low-regularity generalization of another well-known formula:
    \begin{equation*}
        \Ric(X,\cdot) = \Delta_H X^\flat - \nabla^*\nabla X^\flat \qquad \text{for all }X\in W^{1,\infty^-}_\loc(TM)
    \end{equation*}
    where $\nabla^*\nabla X^\flat = -\sum_i \nabla^2_{e_i,e_i} X^\flat$ is the so-called \emph{Bochner} or \emph{connection Laplacian}.
\end{Remark}

\begin{Remark}
    Though not necessary for the application of the result in the later parts of the paper, Theorem \ref{thm:BochWeitz} can be expected to generalize to the case $g\in C^0(\cT^0_2 M)\cap W^{1,2}_\loc(\cT^0_2 M)$, $X\in C^0(T M)\cap W^{1,2}_\loc(T M)$, as an identity in $\D'(M)$. Once one proves all objects in the proof are well-defined, the proof can be followed as is. For instance, one could use the multiplication $W^{1,2}(\R^n)\times W^{-1,2}(\R^n) \to W^{-n,2}(\R^n)$, defined in \cite[Thm. 8.1]{behholst2021sobolevmultiplication} to justify the extention of the Levi-Civita connection associated to $g$ to
    \begin{align*}
        \nabla: (C^0(TM) \cap W^{1,2}_\loc(TM))\times L^2_\loc(TM)\longrightarrow \D'(TM),
    \end{align*}
    and similarly
    \begin{align*}
        g(\nabla_\cdot \cdot , \cdot): (C^0(TM)\cap W^{1,2}_\loc(TM))\times L^2_\loc(TM) \times (C^0(TM)\cap W^{1,2}_\loc(TM))\longrightarrow \D'(M).
    \end{align*}
\end{Remark}

\subsection{The Cheeger-Gromoll splitting theorem for $C^1$-Riemannian metrics}\label{Section: C1splitting}

Let $(M,g)$ be a complete\footnote{Whenever we say that a Riemannian manifold $(M,g)$, with $g$ of lower regularity, is \emph{complete}, we mean that the induced metric space $(M,d_g)$ is complete. The connection to geodesic completeness is established in Theorem \ref{thm:C1hopfrinow}.}, connected smooth manifold endowed with a $C^1$ Riemannian metric $g$. In the recent article \cite{mondino2024equivalence}, the equivalence between a distributional Ricci curvature bound $\Ric \geq 0$ and the metric measure space $(M, d_g, \Vol)$ being an $\mathsf{RCD}(0,\dim M)$-space is established. Thus, due to \cite{gigli2013splitting, gigli2014splitoverview}, we know the $\mathsf{RCD}$-splitting theorem is applicable to $(M, d_g, \Vol)$ given the existence of a line. That general result, however, yields a splitting map which is a priori only Lipschitz. We want to show that for $g\in C^1$ it is in fact a $C^2$-isometry. We will also briefly address the case of higher regularity metrics and how the regularity of the splitting map increases in those cases.

We will now present a $C^1$-version of the Cheeger-Gromoll splitting theorem with a proof that does not rely on the calculus of $\mathsf{RCD}$-spaces. Instead, we will rely on approximation arguments via convolution.

\begin{Theorem}[Cheeger-Gromoll splitting theorem for $C^1$-metrics]\label{thm:SplittingTheorem}
    Let $(M,g)$ be a complete, connected Riemannian manifold with metric of regularity $C^1$. Let $\Ric\geq 0$ in the distributional sense, i.e.\ $\Ric(X,X) \geq 0$ in $\mathcal{D}'(M)$ for every $X \in C^{\infty}_c(TM)$. If $(M,g)$ contains a line $c:\R \to M$ (i.e.\ a geodesic satisfying $d_g(c(s),c(t)) = |s-t|$ for every $s,t \in \R$), then it is $C^2$-isometric to a product
    \begin{align*}
        (M,g) \cong (M'\times \R, g'\oplus \de t^2),
    \end{align*}
    where $(M',g')$ is a smooth manifold with a $C^1$-Riemannian metric $g'$, dimension\footnote{In the case $\dim M = 1$, $M'$ is a point and $M = c(\R)\cong \R$.} $\dim M -1$ and $\Ric_{M'}\geq 0$ in the distributional sense.
\end{Theorem}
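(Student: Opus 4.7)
My plan is to follow the classical Cheeger-Gromoll argument while tracking regularity carefully throughout. Starting from the line $c$, I would define the Busemann functions
\[
b^{\pm}(x) := \lim_{t\to\infty}\bigl(t - d_g(x,c(\pm t))\bigr).
\]
Standard arguments (completeness plus $c$ being a line) ensure that these limits exist, are 1-Lipschitz, satisfy $b^+ + b^- \leq 0$ on $M$ with equality on $c(\R)$ (via the triangle inequality), and that $b^+\circ c = \mathrm{id}_\R$ (so in particular $|\nabla b^+| = 1$ along $c$). The goal is then to prove that $b^+$ is harmonic, its gradient is a parallel unit vector field, and the flow of this field produces the splitting.

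The most delicate step is promoting the distributional bound $\Ric \geq 0$ to distributional Laplace comparison $\Delta b^{\pm} \geq 0$ in $\D'(M)$. I plan to proceed by approximation with the smooth metrics $g_\eps$ of Remark \ref{Remark: Approximatingmetrics}: Propositions \ref{prop:standard_properties_of_starM}(i) and \ref{prop:LpRicConvergence}(i) combined with $g \in C^1$ give $\Ric \star_M \rho_\eps \geq 0$ and $\Ric_{g_\eps} - \Ric \star_M \rho_\eps \to 0$ locally uniformly, hence $\Ric_{g_\eps} \geq -\eta_\eps\, g_\eps$ with $\eta_\eps \to 0$ locally uniformly. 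For each smooth $g_\eps$ with this almost-nonnegative Ricci, classical Laplace comparison applied to the Busemann functions $b^\pm_\eps$ of $g_\eps$ yields $\Delta_{g_\eps} b^\pm_\eps \geq -\delta_\eps$ in $\D'(M)$ with $\delta_\eps\to 0$. By \eqref{eq:distances_g_geps} and $C^1_\loc$-convergence $g_\eps \to g$, the distances $d_{g_\eps}$ converge locally uniformly to $d_g$, so $b^\pm_\eps \to b^\pm$ locally uniformly; passing to the distributional limit (the coefficients of $\Delta_{g_\eps}$ converge in $C^0_\loc$, the functions $b^\pm_\eps$ uniformly) gives $\Delta b^{\pm} \geq 0$ in $\D'(M)$. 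The strong maximum principle applied to $b^+ + b^- \leq 0$, which attains its maximum $0$ on $c(\R)$, forces $b^+ + b^- \equiv 0$; combined with $\Delta b^\pm \geq 0$ this gives $\Delta b^+ = -\Delta b^- = 0$, so $b^+$ is harmonic.

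Elliptic regularity for divergence-form operators $\partial_\alpha(\sqrt{|g|}g^{\alpha\beta}\partial_\beta \cdot)$ with $C^1$ coefficients yields $b^+ \in W^{2,p}_\loc \cap C^{1,\alpha}_\loc$ for every $p<\infty$ and $\alpha \in (0,1)$, so that $X := \nabla b^+ \in W^{1,\infty^-}_\loc(TM)$ and Theorem \ref{thm:BochWeitz} applies. Since $X^\flat = \de b^+$ and $\Delta_H \de b^+ = \de(-\Delta b^+) = 0$, the formula reduces to
\[
\Delta \tfrac{1}{2}|X|^2 = \Ric(X,X) + |\nabla X|^2 \geq 0 \quad \text{in } \D'(M).
\]
Since $b^+$ is 1-Lipschitz, $|X|^2 \leq 1$ a.e., with $|X|^2 = 1$ on $c(\R)$; and as $X \in C^{0,\alpha}_\loc$, the continuous nonnegative function $1 - |X|^2$ attains the value $0$ on the connected manifold $M$ while being a distributional supersolution of $\Delta$, so the strong maximum principle forces $|X|^2 \equiv 1$. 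Bochner then yields $\nabla X \equiv 0$ and $\Ric(X,X) \equiv 0$.

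With $X$ parallel and continuous, the pointwise equation $\partial_\alpha X^\beta = -\Gamma^\beta_{\alpha\gamma}X^\gamma$ together with $\Gamma \in C^{k-1}$ bootstraps $X$ from $C^0$-regularity up to $C^k$, so $b^+ \in C^{k+1}$. The level set $M' := (b^+)^{-1}(0)$ is therefore a $C^{k+1}$-embedded hypersurface (since $|\nabla b^+|=1$), and the flow $\Phi(p,t) := \gamma_p(t)$ of the $C^k$ unit vector field $X$ is a $C^k$-map $M'\times \R \to M$. Integral curves of $X$ are unit-speed geodesics and parallelism of $X$ preserves the orthogonal splitting $TM = \R X \oplus \ker \de b^+$ along the flow, so $\Phi$ is a local isometry between the product metric $h \oplus \de t^2$ and $g$; global bijectivity follows from completeness and $b^+(M) = \R$. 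The induced metric $h$ on $M'$ is $C^k$ and inherits $\Ric_h \geq 0$ from $\Ric_g \geq 0$ together with the product structure. The main obstacle is the distributional Laplace comparison above: Proposition \ref{prop:LpRicConvergence} provides only $L^p_\loc$-closeness of $\Ric_{g_\eps}$ to $\Ric \star_M \rho_\eps$ in general, and it is precisely the stronger locally uniform convergence available when $g \in C^1$ that allows the smooth comparison theorem to survive the passage to the distributional limit.
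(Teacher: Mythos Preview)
Your overall strategy matches the paper's, and your route to $|\nabla b^+| \equiv 1$ --- via Bochner plus the strong maximum principle, using $|\nabla b^+| = 1$ along $c$ --- is a valid and rather clean alternative to the paper's direct argument (which, at each $x$, produces a unit vector $v$ with $g(\nabla b(x),v)=1$ as a limit of initial velocities of minimizers from $x$ to $c(t)$).

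However, there is a genuine gap in your subharmonicity step. You propose to form Busemann functions $b^\pm_\eps$ for the smooth approximants $g_\eps$ and invoke classical Laplace comparison to obtain $\Delta_{g_\eps} b^\pm_\eps \geq -\delta_\eps$. This is problematic on two counts. First, $c$ is a line for $g$ but need not be a ray for $g_\eps$, so the limit $\lim_{t\to\infty}(t - d_{g_\eps}(x,c(t)))$ may fail to exist; and even granting existence, the convergence $b^\pm_\eps \to b^\pm$ would require uniform-in-$t$ control on $|d_{g_\eps}(x,c(t)) - d_g(x,c(t))|$ as $t\to\infty$, which local uniform convergence of the metrics does not provide. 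Second, Laplace comparison for a Busemann function needs the Ricci lower bound along the entire ray, i.e.\ globally, while your bound $\Ric_{g_\eps} \geq -\eta_\eps g_\eps$ with $\eta_\eps \to 0$ is only available \emph{locally} uniformly --- the convolution construction yields no global lower Ricci bound for $g_\eps$.

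The paper avoids both issues by reversing the order of limits: for each \emph{fixed} $t$, it applies smooth Laplacian comparison to the distance functions $d_{g_\eps}(\cdot, c(t))$ (not to Busemann functions), working on a compact set containing all $g_\eps$-minimizers from $\supp\phi$ to $c(t)$ so that the local Ricci bound suffices; then it lets $\eps \to 0$ to obtain $\int d_t\,\Delta\phi\,\de\Vol \geq -\int \frac{n-1}{d_t}\phi\,\de\Vol$, and only afterwards lets $t\to\infty$ to conclude $\Delta b^\pm \geq 0$. With this modification your outline goes through; the remaining steps (elliptic regularity, Bochner, bootstrapping $b \in C^2$, the flow isometry) are in line with the paper.
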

Let us fix a sequence $\eps_i \searrow 0$. We will denote $g_i = g_{\eps_i}$ (see Remark \ref{Remark: Approximatingmetrics}) and all the related objects will have the subscript index $i$.

As is customary in the context of splitting theorems, we start by considering the approximate Busemann functions $b_t^{\pm}(x):= t - d(x,c(\pm t))$. It is easily seen that for fixed $x \in M$, $t \mapsto b_t^{\pm}(x)$ is nondecreasing, $1$-Lipschitz and bounded above by $d(x,c(0))$. Thus the limits $b^{\pm}(x):= \lim_{t \to \infty} b^{\pm}_t(x)$ exist, are $1$-Lipschitz, and the convergence is locally uniform by Dini's theorem.

\begin{Proposition}\label{prop:busemannSubharmonic}
    The Busemann functions $b^\pm$ are subharmonic in the distributional sense, i.e. for all non-negative $\vphi\in C^\infty_c(M)$ it holds that
    \begin{equation}
        \int_M b^\pm \Delta \vphi \,\de\Vol \geq 0.\label{eq:busemann_subharmonic}
    \end{equation}
\end{Proposition}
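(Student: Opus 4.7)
The plan is to regularize $g$ by the smooth metrics $g_i$ of Remark \ref{Remark: Approximatingmetrics} and apply the classical Laplacian comparison theorem on each $(M,g_i)$, then pass to the limits $i\to\infty$ and $t\to\infty$ in succession. The essential ingredient is that on $(M,g_i)$ the Ricci curvature is almost non-negative in a quantitative pointwise sense: since $g\in C^1$, the strengthened version of Proposition \ref{prop:LpRicConvergence}(i) gives locally uniform convergence $\Ric_i - \Ric\star_M\rho_i \to 0$, while Proposition \ref{prop:standard_properties_of_starM}(i) applied to the distributional inequality $\Ric\geq 0$ yields $\Ric\star_M\rho_i \geq 0$ pointwise. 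Hence on any relatively compact open set $U\comp M$ there exists a sequence $\lambda_i = \lambda_i(U)\to 0^+$ with $\Ric_i \geq -(n-1)\lambda_i g_i$ on $U$, where $n=\dim M$ (we may assume $n\geq 2$, since for $n=1$ the splitting is trivial).

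For each $i$ and $t>0$, the smoothness of $g_i$ permits the classical Laplacian comparison theorem of Calabi: as distributions on $M\setminus\{c(t)\}$,
\begin{equation*}
    \Delta_{g_i}\!\bigl(t - d_{g_i}(\cdot, c(t))\bigr) \;\geq\; -(n-1)\sqrt{\lambda_i}\coth\!\bigl(\sqrt{\lambda_i}\,d_{g_i}(\cdot, c(t))\bigr).
\end{equation*}
Fix a non-negative $\varphi\in C^\infty_c(M)$, initially with $c(t)\notin\supp\varphi$. Testing the previous inequality against $\varphi$ and letting $i\to\infty$, using that $g_i\to g$ in $C^1_\loc$ (whence $\Delta_{g_i}\varphi\to\Delta_g\varphi$ and $\Vol_{g_i}\to\Vol_g$ uniformly on $\supp\varphi$, and $d_{g_i}(\cdot,c(t))\to d_g(\cdot,c(t))$ uniformly on compacts via \eqref{eq:distances_g_geps}), together with $\sqrt{\lambda_i}\coth(\sqrt{\lambda_i}\,r)\to 1/r$, one obtains
\begin{equation*}
    \int_M b_t^+\,\Delta_g\varphi\,\de\Vol_g \;\geq\; -(n-1)\int_M \frac{\varphi}{d_g(\cdot,c(t))}\,\de\Vol_g.
\end{equation*}
Since $1/d_g(\cdot,c(t))\in L^1_\loc$ for $n\geq 2$, a standard truncation argument removes the assumption $c(t)\notin\supp\varphi$.

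Next, send $t\to\infty$. The locally uniform convergence $b_t^+\to b^+$ established before the statement makes the left-hand side converge to $\int_M b^+\,\Delta_g\varphi\,\de\Vol_g$, while the line property of $c$ gives $d_g(x,c(t))\geq t-d_g(x,c(0))\to\infty$ uniformly on $\supp\varphi$, so the right-hand side vanishes. This yields \eqref{eq:busemann_subharmonic}; the case of $b^-$ is entirely symmetric.

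The main obstacle is the quantitative control encoded in the first step: pointwise almost-non-negativity $\Ric_i\geq -(n-1)\lambda_i g_i$ with $\lambda_i\to 0$ is precisely what the upgrade from $L^p_\loc$ to locally uniform convergence in Proposition \ref{prop:LpRicConvergence}(i) affords under the $C^1$-assumption on $g$; without this upgrade one would obtain only an integral Ricci lower bound, which would not combine cleanly with the pointwise Calabi comparison theorem. The rest is a routine double limit and a harmless integrable singularity at $c(t)$.
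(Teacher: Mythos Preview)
Your proof is correct and follows essentially the same route as the paper: smooth $g$ to $g_i$, extract an almost-nonnegative Ricci bound for $g_i$ on compacta via the $C^1$-upgrade in Proposition~\ref{prop:LpRicConvergence}(i), apply the smooth Laplacian comparison, and take the double limit $i\to\infty$, then $t\to\infty$. You handle the integrable singularity at $c(t)$ more explicitly than the paper does; both your write-up and the paper's share the same (easily reparable) imprecision that the comparison at points of $\supp\varphi$ requires the Ricci lower bound along $g_i$-minimal geodesics from $c(t)$, not merely on $\supp\varphi$---fixable by letting $U$, and hence $\lambda_i$, depend on $t$, which is harmless since the limit $i\to\infty$ is taken first.
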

\begin{proof}
    Let us  denote $d_t:=d(\,\cdot\, ,c(t))$ and $d_{i,t}:=d_{i}(\,\cdot\, ,c(t))$. Given $\vphi\in C^\infty_c(M)$, let $K \Subset M$ be a compact set with $\supp(\phi) \subseteq K$. Then $d_{i,t} \to d_t$ uniformly on $K$ as $i\to \infty$. Moreover, due to $g_i \to g$ in $C^1_{loc}$ and $\Ric \geq 0$, there exists a sequence $\delta_i\searrow 0$ such that $\Ric_i \geq - (n-1) \delta_i g_i$ on $TM|_K$ (cf.\ Proposition \ref{prop:LpRicConvergence}). Thus 
    \begin{align*}
        \int_M d_{i,t} \Delta_i \phi \, \de\Vol_i = \int_M (\Delta_{i} d_{i,t}) \phi \, \de\Vol_i \leq (n-1) \int_M \sqrt{\delta_i} \coth(\sqrt{\delta_i} d_{i,t}) \phi \, \de\Vol_i,
    \end{align*} 
    because integration happens on the compact set $K$ and standard Laplacian comparison (see e.g.\ \cite[Lem.\ 7.1.9]{petersen2006riemannian}) 
    works there due to the estimate on $K$ for $\Ric_i$. Also
    \begin{align*}
        \int_M d_{i,t} \Delta_i \phi \, \de\Vol_i \longrightarrow \int_M d_t \Delta \phi \,\de\Vol.
    \end{align*}
    Noting that $\lim_{i\to\infty}\sqrt{\delta_i}\coth(\sqrt{\delta_i} d_{i,t}) = \frac{1}{d_{t}}$ uniformly on $K$, we conclude that
    \begin{align*}
    \int_M d_t \Delta \phi \, \de\Vol &= \lim_{i\to\infty} \int_M d_{i,t} \Delta_i \phi \, \de\Vol_i \\
    &\leq \lim_{i\to\infty} (n-1) \int_M \sqrt{\delta_i} \coth(\sqrt{\delta_i} d_{i,t}) \phi \, \de\Vol_i
    = \int_M \frac{n-1}{d_t} \phi \, \de\Vol.    
    \end{align*}
    Note that for any $t \in \R$, $\int_M t \Delta \phi \, \de\Vol = \int_M (\Delta t) \phi \, \de\Vol = 0$, so
    \begin{align*}
        \int_M (t-d_t) \Delta \phi \, \de\Vol \geq - \int_M \frac{n-1}{d_t} \phi \, \de\Vol.
    \end{align*}
    The LHS converges to $\int_M b \Delta \phi \, \de\Vol$ and the RHS to $0$ for $t \to \infty$, so we get that $b^+$ is subharmonic. An analogous proof establishes the same property for $b^-$.
\end{proof}
\begin{Remark}\label{rk:distribLaplacian}
    Let us briefly note that $f\in C^{0,1}(M)$ being \emph{harmonic (subharmonic, superharmonic) in the distributional sense}, i.e.
    \begin{equation*}
        \int_M f \Delta \vphi \,\de\Vol =0 \quad(\text{resp. }\geq 0,\, \leq 0) \qquad \text{for all non-negative }\vphi\in C^\infty_c(M),
    \end{equation*}
    is equivalent to
    \begin{equation*}
        \Delta (f\circ \psi^{-1}) = \frac{1}{\sqrt{\det g}}\partial_\alpha ( \sqrt{\det g} \,g^{\alpha\beta} \partial_\beta( {f\circ\psi^{-1}})) =0 \quad(\text{resp. }\geq 0,\,\leq 0)  \qquad \text{in }\D'(\psi(U))
    \end{equation*}
    for every chart $(U,\psi)$. Indeed, let us prove the subharmonic case, the other cases are analogous. Let $\vphi\in C^\infty_c(M)$ be non-negative and, without loss of generality, we can assume $\supp \vphi$ is fully contained in $U$. Let us denote $\Bar{f}:=f\circ\psi^{-1},\,\Bar{\vphi}:=\vphi \circ \psi^{-1}$. Then we have
    \begin{align*}
        \int_M f \Delta \vphi \,\de\Vol 
        &= \int_{\psi(U)} \Bar{f} \partial_\beta ( \sqrt{\det g} \,g^{\alpha\beta} \partial_\alpha \Bar\vphi)\,\de x 
        = - \int_{\psi(U)}\partial_\beta \Bar{f} \sqrt{\det g} \,g^{\alpha\beta} \partial_\alpha \Bar\vphi \,\de x\\
        &= \langle \partial_\alpha ( \sqrt{\det g} \,g^{\alpha\beta} \partial_\beta \Bar{f}) , \Bar\vphi\rangle \geq 0.
    \end{align*}
    By arbitrariness of $\Bar{\phi}\ge 0$ in $C^\infty_c(\psi(U))$ we have that $\partial_\alpha ( \sqrt{\det g} \,g^{\alpha\beta} \partial_\beta \Bar{f})\geq 0$. This implies that $\partial_\alpha ( \sqrt{\det g} \,g^{\alpha\beta} \partial_\beta \Bar{f})$ is a distribution of order 0, so its product with the positive continuous function ${1}/{\sqrt{\det g}}$ is well-defined and it preserves the non-negativity.
\end{Remark}
\begin{Proposition}
    We have $b^+ = -b^-$. As a consequence, $b^+$ is harmonic in the distributional sense.
\end{Proposition}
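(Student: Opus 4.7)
The plan is to show the identity $b^+ + b^- \equiv 0$ via a combination of the triangle inequality, the line property of $c$, and a strong maximum principle argument. Once this is established, the harmonicity of $b^+$ follows immediately from the subharmonicity of both $b^+$ and $b^-$.

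First I would establish the pointwise inequality $b^+ + b^- \leq 0$. For any $x \in M$ and $t > 0$, the triangle inequality combined with the line property gives
\[
d_g(x, c(t)) + d_g(x, c(-t)) \geq d_g(c(-t), c(t)) = 2t,
\]
which rearranges to $b^+_t(x) + b^-_t(x) \leq 0$; passing to the locally uniform limit $t\to\infty$ yields $b^+ + b^- \leq 0$ on $M$. Next, evaluating on the line, for each $s\in\R$ and $t>|s|$ one computes $b^\pm_t(c(s)) = t - |t\mp s| = \pm s$, so $b^+(c(s)) + b^-(c(s)) = 0$ for every $s$.

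Then I would combine these facts with Proposition \ref{prop:busemannSubharmonic}. The function $u:=b^++b^-$ is Lipschitz (hence in $W^{1,2}_{\loc}$), distributionally subharmonic by linearity, globally nonpositive, and attains the value $0$ on the nonempty set $c(\R)$ inside the connected manifold $M$. A strong maximum principle then forces $u \equiv 0$. To justify this in the present regularity, I would work in a chart and appeal to Remark \ref{rk:distribLaplacian} to rewrite the distributional inequality $\Delta u \geq 0$ in the divergence form
\[
\partial_\alpha\!\left(\sqrt{\det g}\, g^{\alpha\beta} \partial_\beta \bar u\right) \geq 0,
\]
with coefficients $\sqrt{\det g}\, g^{\alpha\beta} \in C^1_{\loc}$ that are locally uniformly elliptic thanks to $g\in C^1$. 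The classical De Giorgi--Nash--Moser strong maximum principle for weak subsolutions of uniformly elliptic operators in divergence form with continuous coefficients then propagates the interior maximum on $c(\R)$ to all of $M$, giving $u \equiv 0$.

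Finally, once $b^+ = -b^-$ is established, the subharmonicity of $b^-$ gives that $b^+ = -b^-$ is distributionally superharmonic; combined with the already established subharmonicity of $b^+$, this yields $\Delta b^+ = 0$ in $\D'(M)$, as desired. The main obstacle in this plan is the invocation of the strong maximum principle in low regularity: one must ensure that the classical result for divergence-form operators applies to distributional (rather than $C^2$) subsolutions. This is precisely where the hypothesis $g\in C^1$ is used, since it guarantees continuous coefficients in the divergence form and allows a direct appeal to standard elliptic theory without any need for the more delicate $\mathsf{RCD}$-based arguments.
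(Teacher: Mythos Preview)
Your proposal is correct and follows essentially the same approach as the paper's proof: both establish $b^++b^-\le 0$ via the triangle inequality, check equality along the line, and then invoke the strong maximum principle for weak subsolutions of divergence-form elliptic operators (the paper cites \cite[Thm.\ 8.19]{giltrud2001elliptic}) to conclude $b^++b^-\equiv 0$, whence harmonicity. One minor remark: your claim that ``this is precisely where the hypothesis $g\in C^1$ is used'' overstates the requirement, since the strong maximum principle in this form already applies with bounded measurable coefficients, so continuity of the coefficients is not actually needed at this step.
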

\begin{proof}
    Observe that for all $x \in M$, $s \in \R$,
    \begin{align*}
        b^+(x)+b^-(x) &= \lim_{t\to\infty}(2t -d(x,c(t))-d(x,c(-t)))\leq \lim_{t\to\infty}(2t -d(c(t),c(-t)))=0;\\
        b^+(c(s))+b^-(c(s))&= \lim_{t\to\infty}2t -d(c(s),c(t))-d(c(s),c(-t))=0.
    \end{align*}
    Since $b^\pm \in C^{0,1}(M)\subseteq W^{1,2}_\loc(M)$, $\Delta b^\pm \geq 0$ holds in the weak sense as well. Thus by the strong maximum principle for elliptic PDE (see \cite[Thm.\ 8.19]{giltrud2001elliptic}) we can conclude that $b^+ +b^-=0$. Since $b^+=-b^-$, it is both subharmonic and superharmonic, thus harmonic.
\end{proof}
From now on let us denote $b:=b^+$.
\begin{Corollary}
    It holds that $b\in W^{2,\infty^-}_\loc(M)$. Furthermore, $b\in C^{1,\alpha}_\loc(M)$ for all $\alpha\in(0,1)$.
\end{Corollary}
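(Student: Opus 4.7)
The statement is local, so we may work in a coordinate chart $(U,\psi)$ and, by Remark \ref{rk:distribLaplacian}, read off the PDE satisfied by $\bar b := b \circ \psi^{-1}$ on $\psi(U) \subseteq \R^n$, namely
\begin{equation*}
\partial_\alpha\!\left( a^{\alpha\beta} \partial_\beta \bar b \right) = 0 \qquad \text{in } \mathcal{D}'(\psi(U)),
\end{equation*}
where $a^{\alpha\beta} := \sqrt{\det g}\, g^{\alpha\beta}$. Since $g \in C^1$ and $g$ is Riemannian, the matrix $(a^{\alpha\beta})$ is of class $C^1$ and uniformly elliptic on any relatively compact subdomain.

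The plan is to bootstrap using standard elliptic regularity. As $b \in C^{0,1}_\loc$ (it is $1$-Lipschitz), we have $\bar b \in W^{1,\infty}_\loc$. Writing the equation in non-divergence form,
\begin{equation*}
a^{\alpha\beta} \partial_\alpha \partial_\beta \bar b = -(\partial_\alpha a^{\alpha\beta})\, \partial_\beta \bar b,
\end{equation*}
the right-hand side lies in $L^\infty_\loc$ because $\partial a \in C^0_\loc$ and $\partial \bar b \in L^\infty_\loc$. Since the coefficients $a^{\alpha\beta}$ are continuous (even $C^1$) and uniformly elliptic, the Calderón--Zygmund / interior $W^{2,p}$ estimates (see e.g.\ \cite[Thm.\ 9.11]{giltrud2001elliptic}) apply to the operator $a^{\alpha\beta}\partial_\alpha\partial_\beta$ and yield $\bar b \in W^{2,p}_\loc(\psi(U))$ for every $p \in (1,\infty)$. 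Taking the intersection over all $p$ gives $b \in W^{2,\infty^-}_\loc(M)$.

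The Hölder statement then follows from the Sobolev--Morrey embedding: from $W^{2,p}_\loc \hookrightarrow C^{1,1-n/p}_\loc$ for $p > n$, letting $p \to \infty$ covers every exponent $\alpha \in (0,1)$, so $b \in C^{1,\alpha}_\loc(M)$ for all such $\alpha$. The only potentially delicate point is verifying that the general Calderón--Zygmund theory applies, but this is immediate here since the coefficient matrix is $C^1$ and the ellipticity constant can be chosen uniformly on each compact subset using the continuity of $g$.
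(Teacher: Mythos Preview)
Your approach is essentially the same as the paper's: localize, use that $b$ solves a uniformly elliptic equation with $C^1$ coefficients, pass to non-divergence form with bounded right-hand side, invoke $L^p$ elliptic estimates, and finish with Sobolev--Morrey. The only technical slip is the direct appeal to \cite[Thm.\ 9.11]{giltrud2001elliptic}: that result is an \emph{a priori} estimate stated for strong solutions $u \in W^{2,p}_\loc$, so it does not by itself promote a $W^{1,\infty}_\loc$ weak solution of the divergence-form equation to $W^{2,p}_\loc$; the non-divergence rewriting $a^{\alpha\beta}\partial_\alpha\partial_\beta \bar b = -(\partial_\alpha a^{\alpha\beta})\partial_\beta \bar b$ is at this stage only a distributional identity, not a pointwise one to which Chapter~9 of Gilbarg--Trudinger applies. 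The paper closes this gap by first invoking a divergence-form regularity result (\cite[Thm.\ 4.9]{giaqmar2012elliptic}) to obtain $b \in W^{2,2}_\loc$ from the weak formulation; once second derivatives exist in $L^2_\loc$, the non-divergence form becomes a genuine strong equation, and then $L^p$ estimates (\cite[Thm.\ 7.3]{giaqmar2012elliptic}) upgrade this to $W^{2,p}_\loc$ for all finite $p$. Inserting this single intermediate $W^{2,2}_\loc$ step into your argument makes it complete and identical in substance to the paper's.
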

\begin{proof}
    Let us restrict ourselves to a relatively compact open chart domain $\Omega$. As a consequence of Proposition \ref{prop:busemannSubharmonic}, it holds that $b\in W^{1,2}(\Omega)$ is a weak solution of
    \begin{equation*}
        \partial_\alpha \left( \sqrt{\det g}\, g^{\alpha\beta}\partial_\beta u \right)=0 \qquad \text{on } \Omega.
    \end{equation*}
    Due to elliptic regularity (see e.g.\ \cite[Thm.\ 4.9]{giaqmar2012elliptic}), it holds that $b\in W^{2,2}_{\mathrm{loc}}(\Omega)$. Now observe that on $\Omega$, $b$ it is a strong solution of
    \begin{equation*}
        g^{\alpha\beta}\partial_\alpha\partial_\beta b= -\left(\partial_\alpha g^{\alpha\beta} \right)\partial_\beta b - g^{\alpha\beta} \partial_\alpha (\log \sqrt{\det g})\, \partial_\beta b.
    \end{equation*}
    Recall that $b$ is Lipschitz, thus its first order derivatives $\partial_\beta b$ are in $L^p_{\mathrm{loc}}(\Omega)$. Due to $L^p$-estimates for solutions of elliptic PDE \cite[Thm.\ 7.3]{giaqmar2012elliptic}, we get $b\in W^{2,p}_\text{loc}(\Omega)$ for all $p\in [1,\infty)$. 
    The second statement follows directly from the embeddings of Sobolev spaces in H\"older spaces.
\end{proof}

Before proceeding with the next part of the proof, we need to briefly discuss the notion of geodesics when the metric $g$ is just continuously differentiable. Due to the fact that now the Christoffel symbols $\Gamma_{\alpha\beta}^\delta$ are merely continuous, the Cauchy problem
\begin{equation*}
    \begin{cases}
        \Ddot{\gamma}^\delta+ \Gamma_{\alpha\beta}^\delta\Dot\gamma^\alpha\Dot\gamma^\beta=0 \quad \text{for }\delta=1,...,\dim M\\
        \gamma(0)=p, \quad \Dot\gamma(0)=v
    \end{cases}
\end{equation*}
admits at least one solution, due to Peano's Theorem \cite[Ch. 2, Thm. 2.1]{hartman2002ODE}, but it may not be unique.\\
Under the assumption that $(M,d_g)$ is a complete metric space, \cite[Rk.\ 2.5.29]{bubuiv2001metric} ensures that any two given points admit a minimizing curve joining them. As seen in \cite{samstein2018geodlowreg}, under the assumption that $g\in C^1(\cT^0_2 M)$ these curves are $C^2$ and satisfy the geodesic equation.\\
When $g$ is continuously differentiable we also have the following version of the Hopf-Rinow theorem (where by \emph{geodesic} we mean any solution of the aforementioned Cauchy problem):
\begin{Theorem}[$C^1$-Hopf-Rinow]\label{thm:C1hopfrinow}
     Let $(M,g)$ be a connected Riemannian manifold with $g\in C^1(\cT^0_2 M)$. Then the following are equivalent:
     \begin{enumerate}
         \item $(M,d_g)$ is a complete metric space.
         \item $(M,d_g)$ is a proper metric space, i.e.\ closed bounded sets are compact.
         \item $(M,g)$ is geodesically complete, i.e.\ every geodesic $\gamma:(a,b)\to M$ can be extended to a geodesic $\bar{\gamma}:\R\to M$.
     \end{enumerate}
 \end{Theorem}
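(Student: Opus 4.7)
The plan is to establish the cycle $(ii)\Rightarrow(i)\Rightarrow(ii)$ together with $(ii)\Rightarrow(iii)\Rightarrow(i)$, exploiting the locally compact length-space structure of $(M,d_g)$ and local analysis of the geodesic ODE whose coefficients $\Gamma^\delta_{\alpha\beta}$ are now only continuous.

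The implication $(ii)\Rightarrow(i)$ is immediate: a Cauchy sequence is bounded, hence precompact by properness, hence convergent. For $(i)\Rightarrow(ii)$, I would note that $(M,d_g)$ is by construction a length space and, being a topological manifold, is locally compact; the classical metric Hopf-Rinow theorem \cite[Thm.\ 2.5.28]{bubuiv2001metric} then yields properness directly.

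For $(ii)\Rightarrow(iii)$, the idea is that on any bounded parameter interval a geodesic $\gamma\colon(a,b)\to M$ has bounded, hence relatively compact, image by $(ii)$. Since $|\dot\gamma|_g$ is constant, $\gamma$ is Lipschitz, so if $b<\infty$ then $\gamma(t)\to p\in M$ as $t\to b$. Passing to a chart around $p$, the continuous Christoffel symbols are bounded on a small compact neighborhood, so the geodesic ODE $\ddot\gamma^\delta=-\Gamma^\delta_{\alpha\beta}\dot\gamma^\alpha\dot\gamma^\beta$ gives $\ddot\gamma$ bounded and hence $\dot\gamma$ extends continuously to $t=b$. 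Applying Peano's theorem at $(p,\dot\gamma(b))$ yields a local geodesic past $b$, and the concatenation with $\gamma$ is $C^2$ because both one-sided second derivatives at $b$ agree via the ODE. A standard maximality argument over the length of the extension interval then shows that $\gamma$ extends to all of $\R$.

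The crux of the argument is $(iii)\Rightarrow(i)$, where the main obstacle is the absence of a single-valued exponential map, due to non-uniqueness of Peano solutions to the geodesic ODE when $g\in C^1$. My strategy is to fix $p\in M$ and show that all closed metric balls $\overline{B^{d_g}_R(p)}$ are compact by a continuity-in-$R$ argument: let $T_*:=\sup\{R\ge 0 : \overline{B^{d_g}_R(p)}\text{ is compact}\}$, positive by local compactness of $M$. Assuming $T_*<\infty$ for contradiction, I would take $(q_n)\subseteq \overline{B^{d_g}_{T_*}(p)}$ with $d_g(p,q_n)\to T_*$ and realize each $q_n$ as the endpoint of a unit-speed minimizing geodesic $\gamma_n$ from $p$; existence of such minimizers comes from the length-space property applied inside compact subballs of radius strictly less than $T_*$ (combined with \cite{samstein2018geodlowreg} to upgrade them to $C^2$-geodesics), and extension to $\gamma_n\colon\R\to M$ is provided by $(iii)$. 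Since $\dot\gamma_n(0)$ lies in the compact unit sphere in $T_pM$, a subsequence converges to some $v\in T_pM$; a $C^1$-Arzel\`a-Ascoli bootstrap, based on $C^1$-equicontinuity from the geodesic ODE with continuous $\Gamma$, then yields $C^1$-convergence of $\gamma_n$ to a limit geodesic $\bar\gamma$ on successively larger compact subintervals of $[0,T_*]$. Passing to the limit would give $q_n\to \bar\gamma(T_*)\in M$, and the same bootstrap used just past $T_*$ would show $\overline{B^{d_g}_{T_*+\delta'}(p)}$ compact for some $\delta'>0$, contradicting maximality of $T_*$. The delicate point, and the main obstacle, is this bootstrap: without ODE uniqueness, the limit $\bar\gamma$ must be built by iterated subsequence extraction along $(\gamma_n)$, and one must verify at each stage that the $C^1$-limit indeed solves the geodesic equation, which works precisely because continuity of $\Gamma^\delta_{\alpha\beta}$ is enough to pass to the limit in the ODE.
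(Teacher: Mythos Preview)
Your proposal is correct, and for $(i)\Leftrightarrow(ii)$ and $(ii)\Rightarrow(iii)$ it matches the paper closely: the paper also invokes the metric Hopf--Rinow--Cohn-Vossen theorem \cite[Thm.\ 2.5.28]{bubuiv2001metric} for the first, and phrases your explicit ``limit plus Peano extension'' as the equivalent statement that integral curves of the geodesic spray on $TM$ remain in compact sets on finite time intervals (hence the spray is complete).

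The real divergence is in $(iii)\Rightarrow(i)$, where you work much harder than necessary. The paper observes that Hopf--Rinow--Cohn-Vossen, which you already invoke, contains a \emph{third} equivalent condition: every minimizing geodesic $\sigma\colon[a,b)\to M$ admits a continuous extension to $[a,b]$. Since metric minimizers in a $C^1$ Riemannian manifold are $C^2$ solutions of the geodesic ODE by \cite{samstein2018geodlowreg}, condition $(iii)$ yields this extension immediately, and the implication is a one-liner. Your approach instead reproves this part of Hopf--Rinow--Cohn-Vossen from scratch via the $T_*$-continuity method and an Arzel\`a--Ascoli bootstrap for geodesics with merely continuous Christoffel symbols. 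That argument is sound (the key use of $(iii)$ is to extend the limit geodesic $\bar\gamma$ from $[0,T_*)$ to $T_*$, which then serves as the limit point for $(q_n)$), but it does not buy you anything here: you are not avoiding Hopf--Rinow--Cohn-Vossen, only failing to exploit its full strength. One minor technical point to patch in your write-up: when $d_g(p,q_n)=T_*$ exactly, the minimizing geodesic from $p$ to $q_n$ is not yet guaranteed; you should first approximate $q_n$ by nearby $q_n'$ with $d_g(p,q_n')<T_*$.
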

 \begin{proof}
     The equivalence of the first two conditions is ensured by the Hopf-Rinow-Cohn-Vossen Theorem \cite[Thm.\ 2.5.28]{bubuiv2001metric} (in particular this equivalence holds even for continuous metrics, cf.\ \cite{Burtscher_length}). Due to the same theorem, we also have that the first two conditions are equivalent to
     \begin{equation*}
         \text{all minimizing geodesics }\sigma:[a,b)\to M\text{ admit a continuous extension }\bar{\sigma}:[a,b]\to M,
     \end{equation*}
     which follows from (iii).\\
     Finally let us prove that (i),(ii) $\Rightarrow$ (iii). As for the smooth case, for any geodesic $\gamma$ it holds that
     \begin{align*}
        \dv{}{t}g(\Dot{\gamma},\Dot{\gamma}) = 2 g ({\Ddot\gamma},\Dot{\gamma})=0.
     \end{align*}
     Indeed, following a classical proof such as \cite[Thm. 5.1.2]{petersen2006riemannian} we notice that $\gamma\in C^2((a,b),M)$ and $g\in C^1(\cT^0_2 M)$ are sufficient conditions for the above formula. It follows that integral curves of the corresponding geodesic spray on $TM$ remain in compact sets on finite time intervals, implying completeness of the spray and thereby
     geodesic completeness by projecting to $M$ (cf.\ \cite[Ch. 4, Th.\ 2.3]{Lang}).
 \end{proof}

\begin{Proposition}
    The norm of the gradient satisfies $\abs{\nabla b}=1$ on all of $M$.
\end{Proposition}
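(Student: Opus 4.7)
The plan is to combine three ingredients: the $1$-Lipschitz property of $b$ (which gives the upper bound $|\nabla b|\le 1$), an explicit computation on the line $c$ (which shows equality holds there), and the Bochner--Weitzenb\"ock identity of Theorem \ref{thm:BochWeitz} applied to $X=\nabla b$ together with the strong maximum principle (which propagates the equality from $c$ to all of $M$).

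First I would note that, since $b$ is $1$-Lipschitz and by the preceding corollary $b\in C^{1,\alpha}_{\loc}(M)$, the continuous vector field $\nabla b$ satisfies $|\nabla b|\le 1$ pointwise on $M$. Next I would evaluate $b$ along the line: for $s>t$ the unit-speed line property gives $d_g(c(t),c(s))=s-t$, hence $b_s(c(t))=t$ and therefore $b(c(t))=t$ for every $t\in \R$. Differentiating yields $\de b(\dot c(t))=1$, and combined with $|\dot c|=1$ and $|\nabla b|\le 1$ this forces $|\nabla b|(c(t))=1$ for all $t$.

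The heart of the argument is to show that $|\nabla b|^2$ is weakly subharmonic. The corollary preceding this proposition gives $b\in W^{2,\infty^-}_{\loc}(M)$, so $X:=\nabla b\in W^{1,\infty^-}_{\loc}(TM)$ and Theorem \ref{thm:BochWeitz} applies. Moreover $X^\flat=\de b$, so $\de X^\flat=\de \de b=0$, while $\delta X^\flat=-\Delta b=0$ by harmonicity of $b$; hence $\Delta_H X=0$. Substituting into \eqref{eq:BochWeitz} and using $\Ric(X,X)\ge 0$ gives, in $W^{-1,1+\frac{1}{n}}_{\loc}(M)\subseteq \D'(M)$,
\begin{equation*}
    \Delta \tfrac{|\nabla b|^2}{2} \;=\; \Ric(\nabla b,\nabla b) + |\nabla^2 b|^2 \;\ge\; 0.
\end{equation*}

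Finally I would apply the strong maximum principle. By Remark \ref{rk:distribLaplacian} the distributional subharmonicity of $|\nabla b|^2$ translates, in any chart, to the continuous function $|\nabla b|^2\in C^{0,\alpha}_{\loc}(M)$ being a weak subsolution of the linear uniformly elliptic equation $\partial_\alpha(\sqrt{\det g}\,g^{\alpha\beta}\partial_\beta u)\ge 0$ with continuous coefficients. Since $|\nabla b|^2\le 1$ on $M$ and equals $1$ on the line $c(\R)$, the strong maximum principle \cite[Thm.\ 8.19]{giltrud2001elliptic} (applied locally and then propagated by connectedness of $M$) forces $|\nabla b|^2\equiv 1$. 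The main delicate point is the first step of this last argument: one must verify that the various objects entering \eqref{eq:BochWeitz} are well defined and interact correctly (in particular the identification $\delta \de b=-\Delta b$ and the passage between distributional and weak formulations of the Laplacian), but these follow from the regularity $b\in W^{2,\infty^-}_{\loc}\cap C^{1,\alpha}_{\loc}$ and Lemma \ref{lem:W-1pCovar}.
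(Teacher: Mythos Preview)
Your proposal is correct but takes a genuinely different route from the paper. The paper establishes $|\nabla b(x)|\ge 1$ pointwise by the classical asymptotic-ray argument: for each $t$ it takes a unit-speed minimizing geodesic $\sigma_t$ from $x$ to $c(t)$, extracts a subsequence of initial velocities $v_{t_j}\to v$, invokes a $C^2$-compactness lemma for geodesics (the Riemannian analogue of \cite[Cor.\ 2.6]{KOSS22C1HawkingPenrose}) to obtain a limit geodesic $\sigma$ with $\sigma'(0)=v$, and passes to the limit in $(b_{t_j}\circ\sigma_{t_j})'\equiv 1$ to conclude $g(\nabla b(x),v)=1$. You instead bring the Bochner--Weitzenb\"ock identity forward (the paper only uses it in the \emph{next} proposition, to prove $\Hess b=0$) and combine it with the strong maximum principle, using the line $c$ to supply interior points where the supremum of $|\nabla b|^2$ is attained. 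Your approach is more analytic and avoids the low-regularity ODE compactness machinery; it also has the pleasant side effect that once $|\nabla b|^2\equiv 1$ is known, the same Bochner identity immediately gives $0=\Ric(\nabla b,\nabla b)+|\Hess b|^2$ with both summands nonnegative, so $\Hess b=0$ falls out for free and the two propositions merge into one. The paper's route, by contrast, keeps the geodesic step and the Bochner step logically separate and is closer in spirit to the classical smooth proof.
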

\begin{proof}
    Since $b$ is 1-Lipschitz, it is enough to prove that $\abs{\nabla b(x)}\geq 1$ for any fixed $x\in M$. As above, consider the approximate Busemann function $b_t(x)=t-d(x,c(t))$. Given 
    $ t\in\R$ (with $x\neq c(t)$), call $v_t:=\sigma_t'(0)\in T_x M$, where $\sigma_t$ is a length minimizing unit speed curve from $x$ to $c(t)$. Due to Theorem \ref{thm:C1hopfrinow}, 
    each $\sigma_t$ can be extended to all of $\R$ as a solution of the geodesic equation. 
    Setting $d:=d(x,c(t))$ we have $\sigma_t: [0,d]\to M$ and $b_t(\sigma_t(s)) = t+s-d$. Hence $(b_t\circ \sigma_t)'(s) = 1$ for all $s\in [0,d]$.

    Due to compactness of the unit sphere in $T_xM$, we can extract a sequence $t_j\to \infty$ and a unit vector $v\in T_xM$ such that $v_{t_j} \to v$. At the same time, due to the Riemannian analogue of \cite[Cor.\ 2.6]{KOSS22C1HawkingPenrose} applied to the family $\{ \sigma_{t_j}\}_j$, 
    there exists $\delta>0$ and a geodesic $\sigma:[-\delta,\delta]\to M$  such that $\sigma(0)=x, \sigma'(0)=v$ and, up to passing to a subsequence of $\{t_j\}_j$, $\sigma_{t_j} \to \sigma$ in $C^2([-\delta, \delta])$.
    We then obtain that $b_{t_j}\circ \sigma_{t_j} \to b \circ \sigma$ uniformly on $[0, \delta]$. At the same time $(b_{t_j}\circ\sigma_{t_j})'\vert_{[0,\delta]}=1$ identically. Consequently,  $b\circ \sigma \vert_{[0,\delta]}$ is differentiable with $(b\circ \sigma)' \vert_{[0,\delta]}\equiv 1$. Finally, we deduce
    \begin{equation*}
        g(\nabla b(x),v) 
         = (b\circ \sigma)'(0)
        =1.
    \end{equation*}
\end{proof}

Since $b\in W^{2,\infty^-}_\loc(M)$, we can define its Hessian $\Hess b = \nabla \de b\in L^{\infty^-}_\loc(\cT^0_2 M)$. In local coordinates
\begin{equation*}
    \Hess b = \left( \partial_\alpha \partial_\beta b - \Gamma_{\alpha \beta}^\gamma \partial_\gamma b \right) d x^\alpha \otimes d x^\beta.
\end{equation*}
\begin{Proposition}
    $\Hess b =0$ almost everywhere.
\end{Proposition}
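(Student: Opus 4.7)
The plan is to apply the generalized Bochner-Weitzenböck identity (Theorem \ref{thm:BochWeitz}) to the vector field $X = \nabla b$, which is admissible because $\nabla b \in W^{1,\infty^-}_\loc(TM)$ by the preceding Corollary. Two of the three terms on the right-hand side will vanish without any extra work. First, since $|\nabla b|^2 \equiv 1$ by the preceding proposition, the term $\Delta(|\nabla b|^2/2)$ is identically zero. Second, under the musical isomorphism $(\nabla b)^\flat = db$; the distributional harmonicity $\Delta b = 0$ gives $\delta(db) = -\Delta b = 0$ while $d(db) = 0$ automatically, so $\Delta_H(db) = d\delta(db) + \delta d(db) = 0$ as a distribution, and hence $\Delta_H(\nabla b) = 0$. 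The identity therefore collapses to
\begin{equation*}
\Ric(\nabla b, \nabla b) = -|\Hess b|^2 \qquad \text{in } W^{-1,1+1/n}_\loc(M).
\end{equation*}

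The next task is to upgrade the hypothesis $\Ric(X,X) \geq 0$, stated only against smooth compactly supported $X$, to distributional non-negativity of the left-hand side above. To this end, I would approximate $\nabla b$ by a sequence $X_j \in C^\infty_c(TM)$ via localized component-wise convolution of $\nabla b$ (with a cutoff $\chi \equiv 1$ on the support of the test function under consideration), so that $X_j \to \nabla b$ in $W^{1,p}_\loc(TM)$ for every $p < \infty$. For each such smooth $X_j$, the hypothesis yields $\int \Ric(X_j,X_j)\,\phi\,\de\Vol \geq 0$ for any non-negative $\phi \in C^\infty_c(M)$, and Bochner rewrites this as
\begin{equation*}
\int_M \Bigl[\Delta \tfrac{|X_j|^2}{2} - |\nabla X_j|^2 + g(\Delta_H X_j, X_j)\Bigr]\phi\,\de\Vol \geq 0.
\end{equation*}
Passing to the limit $j \to \infty$ term-by-term—locally uniform convergence $|X_j|^2 \to 1$ forces the first term to zero after integrating by parts against $\Delta\phi$; $L^p_\loc$-convergence $\nabla X_j \to \Hess b$ (for all $p < \infty$) gives the second term $\to \int |\Hess b|^2 \phi\,\de\Vol$; and a duality pairing in the spirit of Remark \ref{rk:W-1pMultipl} gives the third term $\to 0$—yields
\begin{equation*}
-\int_M |\Hess b|^2 \phi\,\de\Vol \geq 0 \qquad \forall\, \phi \in C^\infty_c(M),\, \phi \geq 0.
\end{equation*}
Combined with the pointwise non-negativity of $|\Hess b|^2 \in L^{\infty^-}_\loc(M)$, this forces $\Hess b = 0$ almost everywhere.

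The principal technical obstacle is the third term $g(\Delta_H X_j, X_j)$, because the Hodge Laplacian $\Delta_H$ does not commute with localized manifold convolution, so commutator errors must be controlled. A Friedrichs-type commutator lemma should deliver $\Delta_H X_j \to \Delta_H(\nabla b) = 0$ in $W^{-1,1+1/n}_\loc(T^*M)$; pairing this with $X_j\phi \to (\nabla b)\phi$ in $W^{1,n+1}_\loc$ via Remark \ref{rk:W-1pMultipl} then closes the argument. Once that analytic detail is settled, the statement is a direct consequence of the Bochner identity together with the harmonicity of $b$ and the pointwise normalization $|\nabla b| \equiv 1$.
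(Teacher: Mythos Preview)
Your proposal is correct and matches the paper's proof: apply Theorem \ref{thm:BochWeitz} to $X=\nabla b$, kill two terms via $|\nabla b|\equiv 1$ and $\Delta_H(db)=0$ (the paper writes this as $\Delta_H(\nabla b)=-\nabla(\Delta b)=0$), and conclude from $\Ric(\nabla b,\nabla b)\ge 0$.

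The only point worth flagging is your handling of the approximation step. The paper dispatches it in one line (``easily seen to hold by approximation''), and indeed there is a more direct route than the one you sketch: rather than rewriting $\Ric(X_j,X_j)$ via Bochner and passing to the limit term by term---which is what creates the Hodge-Laplacian commutator issue you identify as the ``principal technical obstacle''---one can pass to the limit directly in the local coordinate expression $\Ric_{\alpha\beta}X_j^\alpha X_j^\beta$. Here $\Ric_{\alpha\beta}\in W^{-1,1+1/n}_\loc$ is a fixed distribution (independent of $j$), and $X_j^\alpha X_j^\beta \to (\nabla b)^\alpha(\nabla b)^\beta$ in $W^{1,n+1}_\loc$, so the continuity of the multiplication in Remark \ref{rk:W-1pMultipl} delivers $\Ric(X_j,X_j)\to\Ric(\nabla b,\nabla b)$ in $W^{-1,1+1/n}_\loc$ with no Friedrichs-type commutator control needed. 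Your route would also work, but the obstacle you worry about can be sidestepped entirely.
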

\begin{proof}
    Since $\de b \in W^{1,\infty^-}_\loc(T^*M)$, then also $\nabla b \in W^{1,\infty^-}_\loc(TM)$. We can thus apply the Bochner-Weitzenb\"ock identity (Theorem \ref{thm:BochWeitz}). Noting that 
    \begin{align*}
     \Delta_H(\nabla b) = (\Delta_H(db))^\sharp = (d\delta d b)^\sharp = -\nabla (\Delta b),
    \end{align*}
    this gives
    \begin{equation*}
        \Ric(\nabla b,\nabla b)= \Delta \frac{\abs{\nabla b}^2}{2} - \abs{\Hess b}^2 -
            g( \nabla \Delta b, \nabla b)= - \abs{\Hess b}^2.
    \end{equation*}
    Since $\Ric(\nabla b,\nabla b)\geq 0$ (even though $\nabla b$ is not smooth and compactly supported, this is easily seen to hold by approximation), we conclude that $\Hess b =0$.
\end{proof}

\begin{Corollary}
    It holds that $b\in C^2(M)$.
\end{Corollary}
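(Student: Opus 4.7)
The plan is to bootstrap regularity directly from the vanishing Hessian identity, using that the Christoffel symbols are now continuous. Working in a smooth local chart, the formula for the Hessian of a function reads
\begin{equation*}
(\Hess b)_{\alpha\beta} = \partial_\alpha \partial_\beta b - \Gamma^\gamma_{\alpha\beta} \partial_\gamma b,
\end{equation*}
so the previous proposition gives, as an equality of $L^{\infty^-}_{\loc}$-functions,
\begin{equation*}
\partial_\alpha \partial_\beta b = \Gamma^\gamma_{\alpha\beta} \partial_\gamma b \quad \text{a.e.}
\end{equation*}
Since $g \in C^1$, the Christoffel symbols $\Gamma^\gamma_{\alpha\beta}$ are continuous; and since we have already shown $b \in C^{1,\alpha}_{\loc}(M)$ for every $\alpha \in (0,1)$, the partial derivatives $\partial_\gamma b$ are continuous. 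Hence the right-hand side is a continuous function on the chart.

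The conclusion then follows from the elementary fact that a function $u \in W^{1,1}_{\loc}(\Omega)$ whose distributional gradient coincides with a continuous function is of class $C^1$. Applied to $u = \partial_\beta b$ (which is already continuous) with distributional derivatives $\partial_\alpha u = \Gamma^\gamma_{\alpha\beta} \partial_\gamma b \in C^0$, this yields $\partial_\beta b \in C^1$ on each chart, and therefore $b \in C^2(M)$. The elementary fact itself can be proved by a one-line mollifier argument: one sets $u_\eps := u \star \rho_\eps$ in the chart, notes that $u_\eps \to u$ and $\partial_\alpha u_\eps = (\partial_\alpha u) \star \rho_\eps \to \partial_\alpha u$ both locally uniformly (the latter because $\partial_\alpha u$ is continuous), and concludes that the classical partial derivatives of $u$ exist and are continuous.

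I do not expect any serious obstacle here: the heavy lifting (elliptic regularity for $b$, the Bochner--Weitzenb\"ock argument giving $\Hess b = 0$, and $C^1$-regularity of $g$) has already been done. The only subtle point is confirming that the pointwise a.e.\ identity $\partial_\alpha \partial_\beta b = \Gamma^\gamma_{\alpha\beta} \partial_\gamma b$ does promote $\partial_\beta b$ from a merely H\"older-continuous function with $L^{\infty^-}_{\loc}$ weak derivatives to a genuinely $C^1$ function, but this is exactly what the mollifier argument above gives. Higher regularity of $b$ (e.g.\ $C^{k+1}$ when $g \in C^k$) would follow by iterating this identity.
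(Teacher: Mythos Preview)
Your proof is correct and follows essentially the same route as the paper: both use $\Hess b=0$ in coordinates to obtain $\partial_\alpha\partial_\beta b=\Gamma^\gamma_{\alpha\beta}\partial_\gamma b\in C^0$, and then invoke the elementary fact that a distribution whose distributional derivatives are continuous is itself $C^1$. The only difference is that the paper cites this last fact from the literature (Horv\'ath), whereas you spell out the mollifier argument directly.
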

\begin{proof}
    In local coordinates we have that
    \begin{align*}
        \Hess b = \left( \partial_\alpha \partial_\beta b - \Gamma_{\alpha \beta}^\gamma \partial_\gamma b \right) d x^\alpha \otimes d x^\beta=0.
    \end{align*}
    Thus for every $\alpha, \beta$
    \begin{align*}
         \partial_\alpha \partial_\beta b = \Gamma_{\alpha \beta}^\gamma \partial_\gamma b \in C^0 \quad \text{a.e.},
    \end{align*}
so the claim follows from \cite[Ch.\ 4, Prop.\ 10]{Horvath}. 
\end{proof}
After these preparations, we can now establish the desired isometry. 

\begin{Proposition}\label{prop:phiisometry}
    Let $M':= b^{-1}(\{0\})$. Then $M'$ is an embedded $C^2$-hypersurface of $M$ with vanishing second fundamental form and
    can be endowed with a $C^\infty$-structure such that
        \begin{alignat*}{3}
            \Phi: \quad\R &\times M' & \quad&\longrightarrow & &\;\;M\\
            (&t,p)& &\longmapsto &\quad &\Fl_t^{\nabla b}(p)
        \end{alignat*}
        is an isometric $C^2$-diffeomorphism. 
\end{Proposition}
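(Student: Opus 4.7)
The plan is to use everything established in the preceding propositions: $b \in C^2(M)$, $|\nabla b| \equiv 1$, and $\Hess b = 0$ almost everywhere. Since $b \in C^2$ and $g \in C^1$, the tensor $\Hess b$ is continuous, so the almost-everywhere vanishing upgrades to pointwise vanishing on $M$; raising an index, $\nabla \nu \equiv 0$ as a continuous $(1,1)$-tensor, where $\nu := \nabla b \in C^1(TM)$. Since $db$ is nowhere zero, the $C^2$ implicit function theorem makes $M' := b^{-1}(\{0\})$ a $C^2$-embedded hypersurface with global unit normal $\nu|_{M'}$. The Weingarten map of $M'$ is the restriction of $\nabla\nu$ to $TM'$, so $\sff^{M'} \equiv 0$.

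\textbf{$\Phi$ as a $C^1$-diffeomorphism.} Because $\nabla_\nu \nu = 0$ and $|\nu| = 1$, the integral curves of $\nu$ are unit-speed geodesics; Theorem \ref{thm:C1hopfrinow} together with completeness of $(M,d_g)$ ensures that the flow $\Fl_t^\nu$ is defined on all of $\R \times M$, and $\nu \in C^1$ makes it jointly $C^1$ in $(t,x)$. Differentiating along the flow gives $\frac{\de}{\de t}\,b(\Fl_t^\nu(q)) = db(\nu) = 1$, so $b \circ \Phi(t,p) = t$ on $\R \times M'$. This forces injectivity of $\Phi$ (applying $b$ to $\Phi(t_1,p_1)=\Phi(t_2,p_2)$ gives $t_1 = t_2$, then invertibility of $\Fl_{t_1}^\nu$ gives $p_1 = p_2$), while surjectivity follows from $q = \Phi(b(q), \Fl_{-b(q)}^\nu(q))$ with $\Fl_{-b(q)}^\nu(q) \in M'$. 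The explicit $C^1$ inverse $q \mapsto (b(q), \Fl_{-b(q)}^\nu(q))$ exhibits $\Phi$ as a $C^1$-diffeomorphism.

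\textbf{Isometry.} To verify $\Phi^*g = \de t^2 \oplus h$ (with $h$ the induced metric on $M'$), fix $(t_0, p) \in \R \times M'$ and $v, w \in T_pM'$, and set $\gamma(t) := \Phi(t,p)$, $W(t) := d\Fl_t^\nu(v)$, $Z(t) := d\Fl_t^\nu(w)$, so that $d\Phi_{(t_0,p)}(\partial_t) = \nu(\gamma(t_0))$ and $d\Phi_{(t_0,p)}(v) = W(t_0)$. Standard ODE theory for the linearized flow yields the variational equation $\dot W^i = \partial_j \nu^i(\gamma)\,W^j$ in local coordinates, so using $\gamma' = \nu$ one computes
\[
\nabla_{\gamma'} W = \bigl(\dot W^i + \Gamma^i_{jk}\nu^j W^k\bigr)\partial_i = \bigl(\partial_j\nu^i + \Gamma^i_{jk}\nu^k\bigr)W^j\partial_i = (\nabla\nu)(W) = 0,
\]
and analogously $\nabla_{\gamma'} Z = 0$. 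Hence $W$ and $Z$ are parallel along $\gamma$; since the Levi-Civita connection is metric, this gives $g(\nu,\nu)\equiv 1$, $g(\nu,W)\equiv g(\nu(p),v) = 0$ (as $v \perp \nu(p)$ on $M'$), and $g(W,Z)\equiv g(v,w) = h(v,w)$. Evaluated at $t = t_0$ this is exactly $\Phi^*g = \de t^2 \oplus h$ at $(t_0,p)$.

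\textbf{Main difficulty.} The isometry step is the one requiring care: since $\nu$ is merely $C^1$, the variation $\psi(s,t) = \Fl_t^\nu(\alpha(s))$ of the classical proof is at most jointly $C^1$, so the identity $\nabla_{\partial_t}\partial_s\psi = \nabla_{\partial_s}\partial_t\psi$ used in the smooth argument is not directly available. The workaround above recasts the parallelism of the variation field $W$ as a pointwise ODE computation that only involves $\nabla\nu$ as a continuous $(1,1)$-tensor, requiring only $C^1$ regularity of $\nu$ and $C^0$ regularity of the Christoffel symbols.
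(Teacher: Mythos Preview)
Your proof is correct and follows the same overall structure as the paper's: hypersurface regularity from $b\in C^2$ and $|\nabla b|=1$, global flow from $\nabla_\nu\nu=0$ plus Theorem~\ref{thm:C1hopfrinow}, bijectivity from $b\circ\Phi(t,p)=t$. The only noteworthy difference is in the isometry step. The paper differentiates $|T_{(t,p)}\Phi(0,v)|^2$ directly in coordinates, which requires justifying that the mixed partials $\partial_t\partial_a\Phi^\beta$ exist classically (invoking that a distributional derivative represented by a continuous function is a classical derivative). Your route instead shows that the variation field $W(t)=d\Fl_t^\nu(v)$ is parallel along the flow line via the variational equation $\dot W^i=(\partial_j\nu^i)W^j$ and the identity $\nabla_{\gamma'}W=(\nabla\nu)(W)=0$; this is a clean repackaging of the same computation that sidesteps the mixed-partial subtlety by appealing to the standard ODE fact that for $\nu\in C^1$ the linearized flow satisfies the variational equation. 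Your version also yields the stronger conclusion that $W$ is parallel (not just of constant norm), which makes the cross terms $g(\nu,W)$ and $g(W,Z)$ fall out immediately without a separate orthogonality argument.
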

\begin{proof}
    Since $\Hess b=0$, also $\nabla \nabla b=0$ and thus the integral curves of $\nabla b$ are geodesics. By Theorem \ref{thm:C1hopfrinow}, all integral curves of $\nabla b$ are defined on all of $\R$. Since $b \in C^2(M)$ and $\abs{\nabla b}=1$, all level sets $b^{-1}(t)$ are embedded $C^2$-hypersurfaces.\\
    Recall that for $(t,p)\in \R\times M'$, the flow $\Fl_t^{\nabla b} (p)$ of the vector field $\nabla b$ denotes the point $\gamma(t)\in M$, where $\gamma$ is the solution of
    \begin{equation*}
        \begin{cases}
            \Dot\gamma(t) = \nabla b(\gamma(t)),\\
            \gamma(0)=p.
        \end{cases}
    \end{equation*}
    It is a well-known consequence of the flow box theorem (\cite[Prop.\ 4.1.13]{Abraham_Marsden_Ratiu}) that $(t,p)\mapsto \Phi(t,p):= \Fl^{\nabla b}_t(p)$ is a $C^1$-map $\Phi:\R \times M' \to M$, while $t \mapsto \Fl^{\nabla b}_t(p)$ is $C^2$ for every fixed $p \in M$.\\
    Note that for $t \in \R$ and $p \in M'$
        \begin{equation*}
            b(\Fl_t^{\nabla b} (p) )=t,
        \end{equation*}and consequently
        \begin{equation*}
            \Fl_t^{\nabla b} (M' )= b^{-1}(t).
        \end{equation*}
    This immediately implies that $\Phi$ is a $C^1$-diffeomorphism. Let us first check that $\Phi$ is a $C^1$-isometry. Observe that
    \begin{equation}\label{eq:from_here}
       T_{(t,p)}\Phi(a,0) = a \nabla b(\Phi (t,p)).
    \end{equation}
    Fix $p\in M'$. Since $\nabla b(\Phi(t,p)) \perp T_{\Phi(t,p)} (\{b=t\})$ for each $t$, for any $(a,v)\in \R \times T_{p} M$ we have 
    \[g(T_{(t,p)}\Phi(a,v), T_{(t,p)}\Phi(a,v)) = a^2 |\nabla b(t)|^2 + |T_{(t,p)}\Phi(0,v)|^2
    = a^2 + |T_{(t,p)}\Phi(0,v)|^2.
    \] 

    Thus we may conclude the proof by showing that $\abs{T_{(t,p)}\Phi(0,v)}^2$ is independent of $t$ for any given $v\in T_{p}M'$ (because for $t=0$ it holds that $T_{(0,p)} \Phi (0,v) = v$). 
    On $\R\times M'$ let us index with latin letters the coordinates on $\{0\}\times M$ to distinguish them from the flow parameter $t$. Greek letters will be reserved to coordinates on $M$. Locally, by definition, we have
    \begin{align*}
        \partial_t \Phi^\beta (t,p) = (\nabla b)^\beta (\Phi(t,p))
    \end{align*}
    and by symmetry of the second distributional derivatives
    \begin{align*}
        \partial_t\partial_a \Phi^\beta (t,p)=\partial_a\partial_t \Phi^\beta (t,p)= \partial_a[(\nabla b)^\beta (\Phi(t,p))] = \partial_a \Phi^\mu(t,p) \partial_\mu(\nabla b)^\beta (\Phi(t,p)).
    \end{align*}
    Given that these mixed second derivatives are distributions represented by continuous functions, by \cite[Th.\  2.1.2]{Friedlander} the derivatives exist also in the classical sense. Now the following computation is admissible due to the fact that the products are merely multiplications between functions.
    \begin{align*}
        \dv{}{t} \abs{T_{(t,p)}\Phi(0,v)}^2&=\partial_t( g_{\alpha\beta}(\Phi(t,\,.\,))\partial_j\Phi^\alpha v^j\partial_k\Phi^\beta v^k) \\
        &=\partial_t\Phi^\mu \partial_\mu g_{\alpha\beta}\partial_j\Phi^\alpha v^j\partial_k\Phi^\beta v^k +
        2g_{\alpha\beta}\partial_t \partial_j\Phi^\alpha v^j\partial_k\Phi^\beta v^k\\
        &=(\nabla b)^\mu (\Gamma^\sigma_{\mu\alpha}g_{\sigma\beta} + \Gamma^\lambda_{\mu\beta}g_{\alpha\lambda})\partial_j\Phi^\alpha v^j\partial_k\Phi^\beta v^k +
        2g_{\alpha\beta}\partial_j \Phi^\nu \partial_\nu(\nabla b)^\alpha  v^j\partial_k\Phi^\beta v^k\\
        &=2 g(\nabla_{T_{(t,p)}\Phi(0,v)} \nabla b, T_{(t,p)}\Phi(0,v) ) \\
        &= 2(\Hess b)(T_{(t,p)}\Phi(0,v),T_{(t,p)}\Phi(0,v))\\
        &=0.
    \end{align*}
We conclude that $\Phi$ is a $C^1$-isometry. At this point, the regularity of the embedding of $M'$ in $M$ is no longer relevant. We endow $M'$ with a $C^2$-compatible $C^\infty$-structure (cf.\ \cite[Thm.\ 2.9]{Hirsch}). Then $\Phi$ can be understood as a $C^1$-isometry of $C^1$-metrics on smooth manifolds. Thus, a result of Taylor \cite[Thm.\ 2.1]{taylor2006existence} asserting the $C^2$-regularity of distance-preserving maps between $C^1$-Riemannian metrics concludes the proof.
 
\end{proof}
With the following corollary we conclude the proof of Theorem \ref{thm:SplittingTheorem}.
\begin{Corollary}
    In the setting of Theorem \ref{thm:SplittingTheorem}, $M'$ is a $C^\infty$-manifold with a 
    $C^1$-Riemannian metric with $\Ric_{M'} \geq 0$ in the distributional sense.
\end{Corollary}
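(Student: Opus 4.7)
The plan is to confirm that $g'$ is of class $C^1$ and then deduce the distributional Ricci bound by exploiting the product structure provided by $\Phi$. I would proceed in three steps.

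First, I would observe that since $b\in C^2(M)$ with $|\nabla b|\equiv 1$, the level set $M' = b^{-1}(\{0\})$ is a $C^2$-embedded hypersurface, so the inclusion $\iota:M'\hookrightarrow M$ is $C^2$. In local charts the induced metric has components $g'_{ij} = (g_{\alpha\beta}\circ\iota)\,\partial_i\iota^\alpha\,\partial_j\iota^\beta$, a product of $C^1$-factors, whence $g'\in C^1(\cT^0_2 M')$. The same regularity is then inherited by the product metric $g'\oplus dt^2$ on $M'\times\R$, which by Proposition \ref{prop:phiisometry} is isometric to $(M,g)$ via $\Phi$.

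Second, I would extract the bound by testing against product data on $M'\times\R$. Given $X'\in C^\infty_c(TM')$ and a nonnegative $\psi\in C^\infty_c(M')$, choose $\chi\in C^\infty_c(\R)$ with $\chi\geq 0$ and $\int_\R\chi\,dt>0$, and $\eta\in C^\infty_c(\R)$ with $\eta\equiv 1$ on $\supp\chi$; set $\tilde X(p,t):=\eta(t)X'(p)\in C^\infty_c(T(M'\times\R))$. Because the $\R$-factor of the product is flat, one has the distributional identity
\begin{equation*}
\Ric_{g'\oplus dt^2}(\tilde X,\tilde X) \;=\; \eta(t)^2\, \pi^*\bigl(\Ric_{g'}(X',X')\bigr) \qquad \text{in } \mathcal{D}'(M'\times\R),
\end{equation*}
where $\pi:M'\times\R\to M'$ is the projection. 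Testing against $\psi(p)\chi(t)\,d\Vol_{g'}\,dt = \psi(p)\chi(t)\,d\Vol_{g'\oplus dt^2}$ and using $\eta^2\equiv 1$ on $\supp\chi$ yields
\begin{equation*}
\bigl\langle\Ric_{g'\oplus dt^2}(\tilde X,\tilde X),\,\psi\chi\, d\Vol_{g'\oplus dt^2}\bigr\rangle \;=\; \Bigl(\int_\R\chi\,dt\Bigr)\bigl\langle\Ric_{g'}(X',X'),\,\psi\,d\Vol_{g'}\bigr\rangle.
\end{equation*}
The left-hand side corresponds via $\Phi$ to $\langle\Ric_g(\Phi_*\tilde X,\Phi_*\tilde X),\,\Phi_*(\psi\chi\,d\Vol_{g'\oplus dt^2})\rangle$, which is nonnegative by hypothesis; since $\int\chi\,dt>0$ and $\psi\geq 0$ was arbitrary, this forces $\Ric_{g'}(X',X')\geq 0$ in $\mathcal{D}'(M')$.

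The main obstacle I anticipate is twofold: justifying the distributional product formula for $\Ric$ at $C^1$-regularity, and transferring a distributional Ricci bound across the merely $C^1$-isometry $\Phi$. Both issues I would resolve by the smoothing scheme of Subsection \ref{Subsection: C1distribcurvat}: smooth $g'$ to $g'_\eps:=g'\star_{M'}\rho_\eps$, observe that on the smoothed product $g'_\eps\oplus dt^2$ the product formula and the isometric correspondence are classical, and then pass to the limit using Proposition \ref{prop:LpRicConvergence}, which provides the needed convergence of the regularized Ricci tensors. A minor technical point is that $\Phi_*\tilde X$ is only $C^1_c$, not $C^\infty_c$; this is harmless because the distributional Ricci tensor of a $C^1$-metric is of order at most one, hence can be tested against $C^1_c$-vector fields (alternatively, approximate $\Phi_*\tilde X$ in $C^1$ by smooth compactly supported vector fields, along which nonnegativity of $\Ric_g(\cdot,\cdot)$ passes to the limit).
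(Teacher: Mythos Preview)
Your approach is considerably more detailed than the paper's, which dispatches the corollary in one sentence: ``This is immediate, since the distributional Ricci tensor of a product Riemannian manifold splits.'' You correctly isolate the two nontrivial ingredients---the product formula for $\Ric$ at $C^1$-regularity and the transfer of the distributional bound across the $C^1$-isometry $\Phi$---and your treatment of the first via smoothing $g'$ is sound.

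However, your proposed resolution of the second obstacle has a gap. Smoothing $g'$ to $g'_\eps$ does make the product identity $\Ric_{g'_\eps\oplus dt^2}=\Ric_{g'_\eps}\oplus 0$ classical, but it does \emph{not} make the isometric correspondence classical: $\Phi$ is an isometry from $(M'\times\R,\,g'\oplus dt^2)$ to $(M,g)$, not from $(M'\times\R,\,g'_\eps\oplus dt^2)$ to anything on $M$. After smoothing $g'$ you are still left needing the naturality statement $\Phi^*\Ric_g=\Ric_{\Phi^*g}$ for the \emph{unsmoothed} metrics, and this is exactly the $C^1$-diffeomorphism invariance of an order-one distributional tensor that you yourself flagged as delicate. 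Smoothing $g$ on $M$ instead does not help either, since $\Phi^*g_\eps$ is only $C^0$ (as $\Phi$ is $C^1$) and hence has no distributional curvature to speak of; and smoothing $\Phi$ destroys the isometry. One genuine way around this is to pass through the equivalence with the $\mathsf{CD}(0,n)$ condition (Section~\ref{Section: Syntheticdistributional}), which depends only on the metric measure structure and is therefore preserved by $\Phi$; a more self-contained fix would require either an explicit argument that distributional Ricci lower bounds for $C^1$ metrics are invariant under $C^1$-compatible changes of smooth structure, or an upgrade of $\Phi$ to $C^2$-regularity. Neither is supplied, so as written the argument for the transfer step is incomplete.
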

\begin{proof}
This is immediate, since the distributional Ricci tensor of a product Riemannian manifold splits.
\end{proof}

We saw that if the metric is of regularity $C^1$, then the splitting map is of regularity $C^2$. This continues to hold in higher non-smooth regularities, which we summarize in the following result.
\begin{Theorem}
\label{thm: higherregularitysplitting}
    Under the same hypotheses as Theorem \ref{thm:SplittingTheorem}, assume that $g\in C^k(\cT^0_2 M)$, with $k\in \N$, $k\geq 1$. Then $M'$ is a $C^{k+1}$-embedded hypersurface of $M$ with vanishing second fundamental form. Endowing $M'$ with a 
    $C^{k+1}$-compatible $C^\infty$-structure, the splitting map $\Phi: (\R \times M', dt^2 \oplus g')  \to (M,g)$ defined in Proposition \ref{prop:phiisometry} is an isometry of regularity $C^{k+1}$, and $(M',g')$ is a smooth manifold with Riemannian metric $g' \in C^k(\cT^0_2 M')$ having non-negative Ricci curvature.
\end{Theorem}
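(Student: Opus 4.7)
The plan is to bootstrap the regularity of the Busemann function $b$ constructed in the proof of Theorem \ref{thm:SplittingTheorem}, exploiting the identity $\Hess b = 0$ together with the additional smoothness of $g$. Once the optimal regularity of $b$ is known, all statements about $M'$, $g'$, and $\Phi$ will follow from the implicit function theorem, standard ODE theory for $C^k$ vector fields, and the (already established) product argument for the Ricci bound.

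Recall that the proof of Theorem \ref{thm:SplittingTheorem} gives $b \in C^2(M)$ with $|\nabla b| \equiv 1$ and $\Hess b = 0$. In any chart, the latter reads
\[
\partial_\alpha \partial_\beta b \;=\; \Gamma^\gamma_{\alpha \beta}\, \partial_\gamma b,
\]
and the assumption $g \in C^k$ forces $\Gamma^\gamma_{\alpha\beta} \in C^{k-1}$. I would then run the following elementary integer bootstrap: if $b \in C^m$ for some $2 \leq m \leq k$, then $\partial_\gamma b \in C^{m-1}$ and the right-hand side lies in $C^{\min(k-1,\,m-1)} = C^{m-1}$, hence $\partial_\alpha\partial_\beta b \in C^{m-1}$ and so $b \in C^{m+1}$. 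Iterating until $m = k+1$ yields $b \in C^{k+1}(M)$.

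With $b \in C^{k+1}(M)$ and $\nabla b \neq 0$ everywhere, the implicit function theorem gives that $M' = b^{-1}(0)$ is a $C^{k+1}$-embedded hypersurface. The gradient $\nabla b = g^{\alpha\beta} \partial_\beta b\,\partial_\alpha$ is then $C^k$ (product of $C^k$ factors), so standard flow theory (\cite[Prop.\ 4.1.13]{Abraham_Marsden_Ratiu}) shows that $\Phi(t,p) = \Fl^{\nabla b}_t(p)$ is $C^k$ jointly in $(t,p)$. The pointwise computation in Proposition \ref{prop:phiisometry} then shows that $\Phi$ is a $C^k$-diffeomorphism and an isometry; the vanishing of the second fundamental form of $M'$ is a direct restatement of $\Hess b = 0$. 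The induced metric $g'$ on $M'$ is the pullback of $g \in C^k$ under the $C^{k+1}$-inclusion $M' \hookrightarrow M$, hence lies in $C^k(\cT^0_2 M')$.

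Finally, the distributional Ricci lower bound $\Ric_{g'} \geq 0$ on $M'$ is handled exactly as in the corollary following Proposition \ref{prop:phiisometry}: under the isometric splitting $(M,g) \cong (M' \times \R,\, g' \oplus dt^2)$ the distributional curvature splits, and the $dt^2$ factor contributes nothing to Ricci, so $\Ric_g \geq 0$ reduces to $\Ric_{g'} \geq 0$ in $\mathcal{D}'(M')$. The main (and really only) technical point is the bootstrap, and even that is pure book-keeping on integer regularity, with no delicate analytic input needed beyond the already-proven identity $\Hess b = 0$.
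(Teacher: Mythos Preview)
Your proof is correct and follows essentially the same approach as the paper: the core step is the identical integer bootstrap on the equation $\partial_\alpha\partial_\beta b = \Gamma^\gamma_{\alpha\beta}\partial_\gamma b$ to obtain $b\in C^{k+1}$, after which the regularity of $\Phi$ follows from the flowbox theorem applied to $\nabla b\in C^k$. Your write-up is in fact slightly more explicit than the paper's in spelling out why $M'$ is $C^{k+1}$ (implicit function theorem) and why $g'\in C^k$ (pullback along a $C^{k+1}$ inclusion), but the substance is the same.
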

\begin{proof}
    Following the proof of Theorem \ref{thm:SplittingTheorem}, recall that $\Hess b =0$ and $b\in C^2(M)$. 
   Thus we proceed by a bootstrapping argument:
    \begin{align*}
        \partial_\alpha \partial_\beta b &= \Gamma_{\alpha \beta}^\gamma \partial_\gamma b \in C^1 &  &\Longrightarrow & b &\in C^3(M), \\
        & \hspace{0.6em}\vdots &  & \hspace{0.8em} \vdots & & \  \hspace{0.8em} \vdots
        \\
        \partial_\alpha \partial_\beta b &= \Gamma_{\alpha \beta}^\gamma \partial_\gamma b \in C^{k-1} &  & \Longrightarrow & b &\in C^{k+1}(M).       
    \end{align*}
    Recall that the splitting map is $\Phi(t,p)= \Fl_t^{\nabla b}(p)$, which is just the point $u(t)$ where $u:\R\to M$ is the solution to
    \begin{align*}
        \begin{cases}
            \Dot{u}(t) = \nabla b (u(t)),\\
            u(0)=p.
        \end{cases}
    \end{align*}
    Due to the flowbox theorem \cite[Thm.\ 4.1.13]{Abraham_Marsden_Ratiu}, the map $\Phi$ is of the same regularity as $\nabla b$, which is of the same regularity as $g$, i.e., $C^k$. Once again, invoking \cite[Thm.\ 2.1]{taylor2006existence} in the same way as in the proof of Proposition \ref{prop:phiisometry} yields $C^{k+1}$-regularity of the isometry, concluding the proof.
\end{proof}

\begin{Remark}[Weighted generalizations]
It is possible to generalize Theorems \ref{thm:SplittingTheorem} and \ref{thm: higherregularitysplitting} to Riemannian manifolds whose volume measure is weighted by $e^{-V}$, where $V \in C^1(M)$. This is currently work in progress in Lorentzian signature \cite{Quintetellipticsplitting24}, where an elliptic approach to Lorentzian splitting theorems \cite{braun2024elliptic} based on the $p$-d'Alembertian (studied in a synthetic setting in \cite{Octet24+}) is used for low regularity metrics. The observations in that work regarding the effects of the weight are not signature-dependent, and so they provide the arguments needed for a weighted generalization of our results, as claimed.
\end{Remark}

\subsection{Flatness for $C^1$ semi-Riemannian manifolds}
\label{Subsection: Flatness}

The purpose of this subsection is to characterize flatness for metrics of regularity $C^1$. Since no properties specific to Riemannian signature are needed for this, we will (for this section only) consider arbitrary (non-degenerate) semi-Riemannian metrics $g$. For smooth metrics, it is well-known that local isometry to flat space is equivalent to the vanishing of the Riemann curvature tensor. This continues to be the case if the metric is $C^1$, as we will show now.

For the remainder of this subsection, we fix a semi-Riemannian manifold $(M,g)$ such that $g$ is of regularity $C^1$ with signature $(k,l)$, $k,l \geq 0$ and $k+l = n := \dim M$. We also fix a background complete (smooth) Riemannian metric $h$ on $M$. Let $g_{\varepsilon}$ be the smooth semi-Riemannian metrics of signature $(k,l)$ from Remark \ref{Remark: Approximatingmetrics}. We first note the following consequence of the vanishing of the distributional Riemann curvature tensor. 

\begin{Proposition}[Consequences of $\Riem_g=0$]
\label{Proposition: SmallnessofapproxRiemann}
Let $(M,g)$ be a $C^1$ semi-Riemannian manifold satisfying $\Riem_g=0$. Then for the Riemann curvature tensors $\Riem_{\varepsilon}$ of the smooth approximating metrics $g_{\varepsilon}$ the following holds: For all compact $K\Subset M$, for all $C, \delta > 0$ there exists $\varepsilon_0 > 0$ such that for all $\varepsilon<\varepsilon_0$ and for all $C^1$-vector fields $X_1,X_2,X_3$ on $K$ with $\abs{X_j}_h \leq C$, we have that $\abs{\Riem_{\varepsilon}(X_1,X_2)X_3}_h \leq \delta$.
\begin{proof}
    By assumption, $\Riem(X_1,X_2)X_3 = 0$, hence also $(\Riem(X_1,X_2)X_3)\star_M \rho_{\varepsilon} = 0$. 
    It follows from \cite[Prop.\ 4.2(ii) and Rem.\ 4.4]{KOV:22} that 
    $(\Riem(X_1,X_2)X_3) \star_M \rho_{\varepsilon} - \Riem_{\varepsilon}(X_1,X_2)X_3 \to 0$ in $C^0_{loc}$, 
    , giving the claim.
\end{proof}
\end{Proposition}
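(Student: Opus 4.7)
The plan is to exploit the tensorial nature of the Riemann curvature and reduce the statement to a statement about locally uniform convergence of the approximating Riemann tensors themselves. Indeed, $\Riem_\varepsilon$ is a $(1,3)$-tensor field, so its value at a point $p$ on vector fields $X_1, X_2, X_3$ depends only on the pointwise values $X_j(p)$. Consequently, a pointwise bound $\abs{X_j}_h \leq C$ on the compact set $K$ combined with a $C^0_{\loc}$-bound on $\Riem_\varepsilon$ (viewed intrinsically, with norms taken with respect to $h$) is exactly what is needed to produce a uniform bound on $\abs{\Riem_\varepsilon(X_1,X_2)X_3}_h$.

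First, I would invoke the hypothesis $\Riem_g = 0$ in the distributional sense to conclude that the regularized object $\Riem_g \star_M \rho_\varepsilon$ vanishes identically (for $\varepsilon$ small enough so that the convolution is well-defined on a neighborhood of $K$). Next, I would apply the approximation result from \cite{KOV:22} mentioned in the statement, which asserts that the difference $\Riem_g \star_M \rho_\varepsilon - \Riem_\varepsilon$ converges to zero in $C^0_{\loc}$. Combining these two facts immediately yields $\Riem_\varepsilon \to 0$ in $C^0_{\loc}$ as a $(1,3)$-tensor field. Concretely, there exists $\varepsilon_0 > 0$ such that for all $\varepsilon < \varepsilon_0$, the $h$-operator norm of $\Riem_\varepsilon$ on $K$ is smaller than $\delta/C^3$.

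Finally, for any triple of $C^1$-vector fields $X_1, X_2, X_3$ on $K$ satisfying $\abs{X_j}_h \leq C$, pointwise tensorial estimation gives
\begin{equation*}
    \abs{\Riem_\varepsilon(X_1,X_2)X_3}_h(p) \;\leq\; \abs{\Riem_\varepsilon}_h(p)\cdot \abs{X_1}_h(p)\abs{X_2}_h(p)\abs{X_3}_h(p) \;\leq\; \frac{\delta}{C^3}\cdot C^3 \;=\; \delta
\end{equation*}
for every $p \in K$ and every $\varepsilon < \varepsilon_0$, which is the desired conclusion.

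The main obstacle is really to formulate the intermediate convergence statement intrinsically, i.e.\ to check that the result cited from \cite{KOV:22} can be interpreted as convergence of the $(1,3)$-tensor fields in a norm that is compatible with evaluation on vector fields bounded only in the $C^0$-sense (rather than $C^1$). Once this is in place, no differentiation of the $X_j$'s is needed — the tensor character of $\Riem_\varepsilon$ does all the work, and the $C^1$ regularity of the $X_j$ is not even essential beyond ensuring that $\Riem_\varepsilon(X_1,X_2)X_3$ is defined as a continuous vector field.
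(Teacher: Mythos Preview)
Your proposal is correct and follows essentially the same approach as the paper: both use that $\Riem_g=0$ implies the regularization vanishes, and then invoke the $C^0_{\loc}$-convergence result from \cite{KOV:22} comparing $\Riem_\varepsilon$ with $\Riem\star_M\rho_\varepsilon$. The only difference is presentational: the paper applies the convergence to the evaluated expression $\Riem(X_1,X_2)X_3$ directly, whereas you first establish $\Riem_\varepsilon\to 0$ in $C^0_{\loc}$ as a $(1,3)$-tensor and then use a pointwise operator-norm estimate, which has the minor advantage of making the uniformity over all triples $(X_1,X_2,X_3)$ with $\abs{X_j}_h\le C$ completely explicit and confirming (as you note) that the $C^1$-regularity of the $X_j$ plays no role in the estimate.
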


We shall also require the following observation on the dependence of solutions of the parallel transport equation on parameters and initial conditions (cf.\ \cite[Ch.\ V]{hartman2002ODE}): Consider, in some local chart, solutions of the initial value problem for parallel transport of a vector $v$ along a smooth family of curves $(t,x)\mapsto c(t,x)$ ($t\in \R$, $x\in \R^m$):
\begin{equation}\label{eq:eps_parallel_transport}
\begin{split}
    \frac{dV_\eps^k}{dt}(t,x,v) &= - {}^{g_\eps}\Gamma^k_{ij}(c(t,x)) \frac{d(x^j\circ c)}{dt} V_\eps^i(t)(t,x,v), \quad V_\eps(0,x,v) = v \\
    \quad \frac{dV^k}{dt}(t,x,v) &= - {}^{g}\Gamma^k_{ij}(c(t,x)) \frac{d(x^j\circ c)}{dt} V^i(t,x,v), \quad V_\eps(0,x,v) = v 
\end{split}
\end{equation}
Then $V$ is continuously differentiable with respect to $t$, and we have the following convergence properties: $[t\mapsto V_\eps(t,x,v)] \to [t\mapsto V(t,x,v)]$ in $C^1_{\text{loc}}$, locally uniformly in $(x,v)$, and $V_\eps \to V$ locally uniformly in $(t,x,v)$. The same convergence properties hold if on the right hand side of the $V_\eps$-equation a term $b_\eps(x)V_\eps(t,x,v)$ is added, with $b_\eps$  smooth in $x$ and converging to $0$
locally uniformly as $\eps\to 0$.

The following Proposition contains the main arguments needed to prove the flatness criterion.

\begin{Proposition}[Extending vectors to parallel vector fields]
\label{Proposition: Extendingtoparallelvf}
Let $(M,g)$ be a $C^1$ semi-Riemannian manifold satisfying $\Riem_g = 0$. Then for any $p \in M$ and $v \in T_pM$, there exists an open neighborhood $U$ around $p$ and a parallel vector field $V \in C^1(TU)$ with $V_p = v$.
\begin{proof}
As this is a local question, suppose $M=\R^n$ and $p=0$. Consider an open cube $\{x \in \R^n \mid |x^j| < \eta \quad \forall j\} =: C_{\eta}$ around $0$.\par 
We start with $v \in T_pM$ and parallel transport it along the $x^1$-axis (inside $C_{\eta}$) with respect to $g_{\varepsilon}$, then from each point on the $x^1$-axis we parallel transport the resulting vector field along the $x^2$-axis (again with respect to $g_{\varepsilon}$) and so on. By smooth dependence on initial data and the right hand side in the parallel transport equation, this construction yields a smooth vector field $V_{\varepsilon}$ on $C_{\eta}$. By construction, $\nabla^{\varepsilon}_{\partial_1} V_{\varepsilon} = 0$ along the $x^1$-axis in $C_{\eta}$, $\nabla^{\varepsilon}_{\partial_2} V_{\varepsilon} = 0$ in the $x^1x^2$-plane in $C_{\eta}$, and so on (where $\nabla^{\varepsilon}$ denotes the Levi-Civita connection with respect to $g_{\varepsilon}$). Generally, on $M_k:=\{x^{k+1} = \dots = x^n = 0\} \cap C_{\eta}$, we have $\nabla^{\varepsilon}_{\partial_k} V_{\varepsilon} = 0$. Let us now determine, e.g., $\nabla^{\varepsilon}_{\partial_1} V_{\varepsilon}$ in $M_2$. For this, we calculate
\begin{align*}
    \nabla^{\varepsilon}_{\partial_2}\nabla^{\varepsilon}_{\partial_1} V_{\varepsilon} = \nabla^{\varepsilon}_{\partial_1}\nabla^{\varepsilon}_{\partial_2} V_{\varepsilon} + \Riem_{\varepsilon}(\partial_2,\partial_1)V_{\varepsilon}.
\end{align*}
The first term vanishes in $M_2$, so we are left with estimating the curvature term. We take $K:=\overline{C_{\eta/2}}$, then, 
by the above observation
$V_\eps$ converges in $C^0$ to the corresponding solution of the $g$-parallel transport equation. In particular there exists $C >0$ such that on $K$ (with $|\, .\,|$ denoting the Euclidean norm)
\begin{align*}
    |V_{\varepsilon}| \leq C \quad \forall \varepsilon.
\end{align*}
Now let $\delta > 0$ be arbitrary, then by Proposition \ref{Proposition: SmallnessofapproxRiemann} there exists $\varepsilon_0$ such that for all $\varepsilon < \varepsilon_0$ and all $1\le i,j \le n$, we have
\begin{align*}
    |\Riem_{\varepsilon}(\partial_i,\partial_j)V_{\varepsilon}| < \delta \quad \text{on } K.
\end{align*}
This shows that $\nabla^{\varepsilon}_{\partial_1} V_{\varepsilon}$ is almost $g_{\varepsilon}$-parallel along the $x^2$-lines. Now let $W_{\varepsilon}$ be the vector field in $M_2$ that arises by $g_{\varepsilon}$-parallel transporting $\nabla^{\varepsilon}_{\partial_1} V_{\varepsilon}$ from points on the $x^1$-axis along the $x^2$-lines. We will now make precise that $W_{\varepsilon}$ and $\nabla^{\varepsilon}_{\partial_1}V_{\varepsilon}$ are close. Since the difference of $W_{\varepsilon}$ and $\nabla^{\varepsilon}_{\partial_1} V_{\varepsilon}$ is the solution of the parallel transport equation with an additional error term $\Riem_{\varepsilon}(\partial_2,\partial_1)V_{\varepsilon}$ which tends to $0$ uniformly, the difference $W_{\varepsilon} - \nabla^{\varepsilon}_{\partial_1}V_{\varepsilon}$ converges in $C^0$ to the zero vector field.
But then necessarily, $\nabla^{\varepsilon}_{\partial_1}V_{\varepsilon}$ converges to the $g$-parallel transport along the $x^2$-lines of the vectors $\nabla_{\partial_1}^{\varepsilon}V_{\varepsilon}$ evaluated along the $x^1$-axis (because $W_{\varepsilon}$ does so). Moreover, by construction $\nabla^{\varepsilon}_{\partial_1}V_{\varepsilon} = 0$ on the $x^1$-axis, hence $W_{\varepsilon} = 0$, so $\nabla^{\varepsilon}_{\partial_1}V_{\varepsilon} \to 0$ in $M_2$. Iterating this procedure for all derivatives in all coordinate directions, one sees that $\nabla_{\partial_k}^{\varepsilon} V_{\varepsilon}$ is uniformly small on $C_{\eta/2}$. 
By construction, $V_{\varepsilon} \to V$ in $C^0$, where $V$ is the $C^1$-vector field obtained from $g$-parallel transporting $v$ along $x^1$, then along $x^2$ and so on (note that the $V$ so constructed is indeed $C^1$ by \eqref{eq:eps_parallel_transport}). 
But then, again by \eqref{eq:eps_parallel_transport}, also
$\nabla^{\varepsilon}_{\partial_k}V_{\varepsilon} \to \nabla_{\partial_k} V$, and we obtain that $V$ is $g$-parallel in $C_{\eta/2}$, concluding the proof.
\end{proof}
\end{Proposition}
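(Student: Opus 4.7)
The plan is to work locally in a chart around $p = 0 \in \R^n$ on a small cube $C_\eta$ and to build the desired parallel extension $V$ as the $C^0$-limit of vector fields $V_\eps$ constructed by iterated parallel transport for the smooth approximants $g_\eps$, exploiting the key fact (Proposition \ref{Proposition: SmallnessofapproxRiemann}) that the smoothed Riemann tensors $\Riem_\eps$ become arbitrarily small when applied to $C^1$-vector fields of bounded $h$-norm. The classical smooth proof proceeds by path-independent parallel transport using $\Riem = 0$; here $\Riem_g = 0$ holds only distributionally, so we must regularize and quantify how "nearly holonomy-free" the approximations are.

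For each $\eps$, I would define $V_\eps \in C^\infty(TC_\eta)$ by the standard iterative construction: $g_\eps$-parallel transport $v$ along the $x^1$-axis, then from each point on that axis along the $x^2$-direction, and so on. By construction $\nabla^\eps_{\partial_k} V_\eps \equiv 0$ on the coordinate slice $M_k := \{x^{k+1} = \dots = x^n = 0\} \cap C_\eta$, and smooth dependence on parameters yields the required regularity of $V_\eps$.

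The heart of the matter is to show that on a smaller cube $K = \overline{C_{\eta/2}}$, all the quantities $\nabla^\eps_{\partial_k} V_\eps$ are uniformly small (in $\eps$) in every coordinate direction. The commutator identity
\begin{equation*}
\nabla^\eps_{\partial_j}\nabla^\eps_{\partial_k} V_\eps \;=\; \nabla^\eps_{\partial_k}\nabla^\eps_{\partial_j} V_\eps \;+\; \Riem_\eps(\partial_j,\partial_k) V_\eps
\end{equation*}
says that along $x^j$-lines emanating from a slice where $\nabla^\eps_{\partial_k} V_\eps$ is already known to vanish, this quantity satisfies a parallel transport ODE with inhomogeneous term $\Riem_\eps(\partial_j,\partial_k) V_\eps$. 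Since $V_\eps \to V$ in $C^0$ by the continuous dependence result for \eqref{eq:eps_parallel_transport}, the $h$-norm of $V_\eps$ stays uniformly bounded on $K$, and Proposition \ref{Proposition: SmallnessofapproxRiemann} then forces the inhomogeneous term to zero uniformly. Continuous dependence of ODEs on the forcing term then makes $\nabla^\eps_{\partial_k} V_\eps$ itself uniformly small, and an induction on slices (using the almost-vanishing on $M_j$ as the initial condition when moving in the $x^{j+1}$-direction, etc.) propagates the estimate to all of $C_{\eta/2}$.

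Finally, passing to the limit, $V_\eps \to V$ and $\nabla^\eps_{\partial_k} V_\eps \to \nabla_{\partial_k} V$ (again from \eqref{eq:eps_parallel_transport}), so the uniform smallness forces $\nabla V = 0$ on $C_{\eta/2}$, with $V_p = v$ and $V \in C^1$. The main obstacle is the middle step: carefully controlling the error from the commutator identity through the slice-by-slice extension with estimates that are uniform in $\eps$. Merely knowing $\Riem_\eps \to 0$ distributionally or pointwise would be insufficient; one genuinely needs the uniform vanishing of $\Riem_\eps$ when paired against $C^1$-vector fields of bounded $h$-norm, which is exactly what Proposition \ref{Proposition: SmallnessofapproxRiemann} provides and is therefore the crucial ingredient feeding into the ODE comparison.
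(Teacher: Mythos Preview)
Your proposal is correct and follows essentially the same approach as the paper: construct $V_\eps$ by iterated $g_\eps$-parallel transport on a cube, use the commutator identity together with Proposition~\ref{Proposition: SmallnessofapproxRiemann} to show that each $\nabla^\eps_{\partial_k} V_\eps$ satisfies an inhomogeneous parallel transport equation with vanishing forcing and small initial data, propagate the smallness by induction on slices, and pass to the limit via \eqref{eq:eps_parallel_transport}. The paper makes the ODE comparison step explicit by introducing the auxiliary field $W_\eps$ (the genuine $g_\eps$-parallel transport of the initial data along $x^j$-lines) and comparing it to $\nabla^\eps_{\partial_k} V_\eps$, whereas you invoke continuous dependence on the forcing term more abstractly, but the content is identical.
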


\begin{Lemma}[Coordinates from $C^1$-vector fields]
\label{Lemma: Coordinatesfromvectorfields}
Let $N$ be a smooth manifold of dimension $n$ and let $E_1,\dots,E_n$ be $C^1$-vector fields defined on an open set $U \subseteq N$ satisfying $[E_i,E_j] = 0$ for all $i,j$. Then there exist $C^2$-coordinates $\phi=(x^j):U \to \phi(U) \subseteq \R^n$ onto an open set in $\R^n$ such that $\frac{\partial}{\partial x^j} = E_j$.
\begin{proof}
This is proven in the same manner as the smooth case, for the latter see e.g.\ \cite[Thm.\ 9.46]{LeeSmoothmanifolds2nded}.
\end{proof}
\end{Lemma}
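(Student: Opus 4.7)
I would closely follow the standard smooth proof (cf.\ \cite[Thm.\ 9.46]{LeeSmoothmanifolds2nded}), tracking regularity carefully. Since the statement is local and the conclusion requires $\{E_j(p)\}$ to be a basis of $T_pN$, I work in a neighborhood of a fixed point $p\in U$ (the intended application in Subsection \ref{Subsection: Flatness} produces a parallel orthonormal frame, so linear independence is automatic). As $C^1$ vector fields are locally Lipschitz, the Picard--Lindel\"of theorem gives for each $j$ a $C^1$-flow $\Fl^{E_j}$ defined on a neighborhood of $\{0\}\times V$ for some open $V\ni p$. I then define
\begin{equation*}
    \Psi(t_1,\ldots,t_n) := \Fl^{E_1}_{t_1}\circ \cdots \circ \Fl^{E_n}_{t_n}(p)
\end{equation*}
on a small cube around the origin in $\R^n$. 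Assuming that the flows commute pairwise, $\Psi$ is invariant under permutations of the composition order, so $\partial_{t_j}\Psi(t) = E_j(\Psi(t))$ for every $t$. Since $E_j\in C^1$ and $\Psi\in C^1$ (as a composition of $C^1$-maps), each $\partial_{t_j}\Psi$ is $C^1$, hence $\Psi\in C^2$. As $D\Psi(0)$ has columns $E_1(p),\ldots,E_n(p)$ and is invertible, the $C^2$-inverse function theorem produces a $C^2$-inverse $\phi := \Psi^{-1}$, and by construction $\partial/\partial x^j = E_j$.

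The main obstacle is thus showing $\Fl^{E_i}_s\circ\Fl^{E_j}_t = \Fl^{E_j}_t\circ\Fl^{E_i}_s$ for small $s,t$ when $[E_i,E_j]=0$ and the $E_i$ are merely $C^1$. The classical smooth proof uses $\frac{d}{dt}(\Fl^{E_j}_t)^*E_i = (\Fl^{E_j}_t)^*[E_j,E_i]$, which is problematic here because the left-hand side would differentiate the Jacobian of a $C^1$-flow, which is only continuous. I would circumvent this by approximation: let $E_j^\eps$ be smoothings of $E_j$ via manifold convolution (Subsection~\ref{Subsection: C1distribcurvat}), so that $E_j^\eps\to E_j$ in $C^1_\loc$. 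Then the coordinate expression $[E_i^\eps,E_j^\eps]^l = (E_i^\eps)^k\partial_k (E_j^\eps)^l - (E_j^\eps)^k\partial_k (E_i^\eps)^l$ converges to $[E_i,E_j]=0$ in $C^0_\loc$, while classical ODE stability yields $\Fl^{E_j^\eps}\to \Fl^{E_j}$ in $C^1_\loc$, uniformly on compact time intervals. For each fixed $\eps$ the smooth identity integrates to
\begin{equation*}
    (\Fl^{E_j^\eps}_t)^*E_i^\eps - E_i^\eps \;=\; \int_0^t (\Fl^{E_j^\eps}_\tau)^*[E_j^\eps, E_i^\eps]\,d\tau,
\end{equation*}
whose right-hand side tends to $0$ in $C^0_\loc$ as $\eps\to 0$ (using the above $C^1$ convergence of $\Fl^{E_j^\eps}$ to bound the Jacobian factor). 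Consequently $\alpha^\eps(s):=\Fl^{E_j^\eps}_{-t}\circ\Fl^{E_i^\eps}_s\circ\Fl^{E_j^\eps}_t(x)$ is an integral curve of the vector field $(\Fl^{E_j^\eps}_t)^*E_i^\eps$, which converges uniformly to $E_i$. By Gr\"onwall's lemma, $\alpha^\eps(s)\to\Fl^{E_i}_s(x)$ uniformly on compact sets in $(s,x)$; on the other hand, the $C^0$-convergence of the individual flows forces $\alpha^\eps(s)\to\Fl^{E_j}_{-t}\circ\Fl^{E_i}_s\circ\Fl^{E_j}_t(x)$. Identifying the two limits yields the desired commutativity.

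With commutativity established, the outline of the first paragraph produces the chart $\phi$ and concludes the proof. A minor point to be verified is that the common neighborhood on which all the flows are defined and commute can be chosen small enough that $\Psi$ is injective and $D\Psi$ remains invertible throughout; this follows from continuity of the flows and of $(E_1,\ldots,E_n)$ near $p$ together with $D\Psi(0)$ being invertible.
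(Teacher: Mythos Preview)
Your proposal is correct and follows essentially the same approach as the paper, which simply refers to the smooth argument in \cite[Thm.\ 9.46]{LeeSmoothmanifolds2nded}; you have filled in the details with appropriate regularity bookkeeping. One remark: the approximation detour for commutativity of flows, while valid, is not strictly necessary---since for a $C^1$ vector field the Jacobian of the flow is $C^1$ in the time variable via the variational equation, the identity $\frac{d}{dt}(\Fl^{E_j}_t)^*E_i = (\Fl^{E_j}_t)^*[E_j,E_i]$ can be verified directly at this regularity.
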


Let us now come to the main result:

\begin{Theorem}[Flatness for $C^1$-metrics]
\label{Theorem: FlatnesscriterionC1}
Let $(M,g)$ be an $n$-dimensional $C^1$ semi-Riemannian manifold of signature $(k,l)$. Then the following are equivalent:
\begin{enumerate}
    \item $(M,g)$ is \textit{(rank one) distributionally curvature-flat}, i.e.\ $\Riem_g = 0 \in \mathcal{D}^{'(1)}(T^1_3 M)$.
    \item $(M,g)$ is \textit{$C^1$ frame-flat}, i.e.\ for each $p \in M$ there exists a neighborhood $U$ of $p$ and an orthonormal frame $E_1,\dots,E_n$ on $U$ of regularity $C^1$ consisting of parallel vector fields.
    \item $(M,g)$ is \textit{$C^2$ coordinate-flat}, i.e.\ for each $p \in M$ there is a coordinate neighborhood $(U,\phi)$ containing $p$ such that the coordinate map $\phi: U \to \phi(U) \subseteq \R^n$ is a $C^2$-diffeomorphism from $U$ onto an open subset of $\R^n$ satisfying $\phi^* g_{k,l} = g$, where $g_{k,l}$ is the semi-Euclidean metric of signature $(k,l)$.
\end{enumerate}
\begin{proof}
    $(i) \Rightarrow (ii)$: Suppose that $\Riem_g=0$. Let $p \in M$ and let $(b_1,\dots,b_n)$ be a $g$-orthonormal basis of $T_pM$. By Proposition \ref{Proposition: Extendingtoparallelvf}, we can extend the $b_i$ to parallel $C^1$ vector fields $E_i$ in a neighborhood $U$ around $p$. Since the $E_i$ are parallel, they constitute an orthonormal frame in $U$.\par
    $(ii) \Rightarrow (iii)$: Let $E_1,\dots,E_n$ be a parallel orthonormal frame of regularity $C^1$ defined on an open set $U$. Then their Lie brackets vanish:
    \begin{align*}
        [E_i,E_j] = \nabla_{E_i}E_j - \nabla_{E_j} E_i = 0.
    \end{align*}
    Thus, by Lemma \ref{Lemma: Coordinatesfromvectorfields}, there exist coordinates $x^j$ on $U$ such that $\frac{\partial}{\partial x^j} = E_j$, which are the required flat $C^2$-coordinates.\par 
    $(iii) \Rightarrow (i)$: Given $C^2$-flat coordinates, it is a purely symbolic calculation (just like in the smooth setting) to prove that $\Riem_g$ is the pullback of the vanishing curvature tensor on $\R^{k,l}$, hence it is also zero.
\end{proof}
\end{Theorem}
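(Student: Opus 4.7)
The plan is to establish the equivalences via the cyclic chain $(i) \Rightarrow (ii) \Rightarrow (iii) \Rightarrow (i)$, essentially mimicking the classical smooth proof but leveraging the low-regularity machinery (in particular Proposition \ref{Proposition: Extendingtoparallelvf} and Lemma \ref{Lemma: Coordinatesfromvectorfields}) that has just been prepared.

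For $(i) \Rightarrow (ii)$, I would fix $p \in M$ and choose a $g_p$-orthonormal basis $b_1, \dots, b_n$ of $T_pM$. Applying Proposition \ref{Proposition: Extendingtoparallelvf} to each $b_j$ gives parallel $C^1$-vector fields $E_j$ defined on neighborhoods $U_j \ni p$; passing to $U = \bigcap_j U_j$ yields a common domain. Since $E_j$ is $g$-parallel, $\partial_k(g(E_i, E_j)) = g(\nabla_{\partial_k} E_i, E_j) + g(E_i, \nabla_{\partial_k} E_j) = 0$ in the distributional sense, so $g(E_i, E_j)$ is constant in $U$ (shrunk to be connected) and hence equals $g_p(b_i, b_j) = \pm \delta_{ij}$. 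This gives the desired parallel orthonormal frame.

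For $(ii) \Rightarrow (iii)$, let $E_1, \dots, E_n$ be the parallel $C^1$-orthonormal frame on $U$. The Levi-Civita connection being torsion-free yields
\begin{equation*}
[E_i, E_j] = \nabla_{E_i} E_j - \nabla_{E_j} E_i = 0
\end{equation*}
(a pointwise identity between continuous vector fields, since the Christoffel symbols are $C^0$ and the $E_j$ are $C^1$). Lemma \ref{Lemma: Coordinatesfromvectorfields} then produces $C^2$-coordinates $(x^j)$ on a possibly smaller neighborhood with $\partial/\partial x^j = E_j$. In these coordinates, $g_{ij} = g(E_i, E_j) = \pm \delta_{ij}$ is constant, so the coordinate map is the required isometry onto an open subset of $\R^{k,l}$.

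For $(iii) \Rightarrow (i)$, in any flat $C^2$-coordinate chart the components $g_{ij}$ are constant, the Christoffel symbols vanish identically, and thus the distributional Riemann tensor vanishes in that chart by the same symbolic identity as in the smooth case; since this holds in a neighborhood of every point, $\Riem_g = 0 \in \mathcal{D}^{'(1)}(\cT^1_3 M)$. I expect no serious obstacles here beyond Proposition \ref{Proposition: Extendingtoparallelvf}, which is the real heart of the argument; the remaining steps reduce to checking that standard smooth arguments go through once we have $C^1$-parallel fields to work with, which follows from the fact that pointwise algebraic identities between continuous tensor fields (like $[E_i,E_j] = 0$ or $g(E_i,E_j) = \text{const}$) need only hold in the distributional sense to hold classically.
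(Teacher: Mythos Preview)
Your proposal is correct and follows essentially the same cyclic strategy as the paper's proof, invoking Proposition~\ref{Proposition: Extendingtoparallelvf} for $(i)\Rightarrow(ii)$, torsion-freeness plus Lemma~\ref{Lemma: Coordinatesfromvectorfields} for $(ii)\Rightarrow(iii)$, and the symbolic vanishing of the Christoffel symbols for $(iii)\Rightarrow(i)$. The only differences are cosmetic: you spell out a few details (intersecting the neighborhoods $U_j$, the distributional constancy of $g(E_i,E_j)$) that the paper leaves implicit.
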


\begin{Corollary}[Flatness for $C^k$-metrics]
\label{Corollary: FlatnessforCkmetrics}
If $g \in C^k$ in the setting of Theorem \ref{Theorem: FlatnesscriterionC1}, with $\mathbb{N} \ni k \geq 2$, then, in the equivalent descriptions of flatness, the Riemann curvature tensor is $C^{k-2}$, the local parallel orthonormal frame $(E_1,\dots,E_n)$ is $C^k$, and the flat coordinates are $C^{k+1}$. 
\end{Corollary}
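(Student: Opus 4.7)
The plan is to mirror the logical structure of Theorem \ref{Theorem: FlatnesscriterionC1} and track regularity through elementary bootstrap arguments at each step. The bound on $\Riem_g$ is immediate from its coordinate formula: since $g \in C^k$ implies $\Gamma^\alpha_{\beta\gamma}\in C^{k-1}$, and $\Riem_g$ involves one more derivative of $\Gamma$ together with algebraic products, one has $\Riem_g \in C^{k-2}(\cT^1_3 M)$.

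For the parallel orthonormal frame, I would invoke Proposition \ref{Proposition: Extendingtoparallelvf} (which applies since $C^k \subseteq C^1$) to produce a local $C^1$ parallel orthonormal frame $(E_1,\dots,E_n)$, and then bootstrap on the parallelism equation
\begin{equation*}
    \partial_\alpha E_j^\beta = -\Gamma^\beta_{\alpha\gamma} E_j^\gamma.
\end{equation*}
If $E_j$ is already known to be $C^m$ with $1\leq m < k$, the right-hand side is a product of a $C^{k-1}$ object and a $C^m$ object, hence of class $C^{\min(k-1,m)}=C^m$, so $E_j \in C^{m+1}$. Starting from $m=1$, the iteration terminates at $E_j \in C^k$, the stage at which $\Gamma$ becomes the bottleneck.

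For the flat coordinates, Lemma \ref{Lemma: Coordinatesfromvectorfields} provides a chart $\phi:U\to\phi(U)\subseteq \R^n$ with $\partial_{x^j} = E_j$, whose inverse satisfies $\partial_{x^j}(\phi^{-1})^\alpha = E_j^\alpha\circ \phi^{-1}$. The same philosophy applies: starting from $\phi^{-1}\in C^2$ (as given by Lemma \ref{Lemma: Coordinatesfromvectorfields} applied to $C^1$ commuting fields) and using $E_j \in C^k$, the chain rule gives $\phi^{-1}\in C^{m+1}$ whenever $\phi^{-1}\in C^m$ with $m\leq k$, so the iteration terminates at $\phi^{-1}\in C^{k+1}$. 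An appeal to the $C^{k+1}$ inverse function theorem (whose hypothesis holds because $D\phi^{-1}$ is the non-singular frame matrix $(E_j^\alpha)$) then promotes $\phi$ itself to class $C^{k+1}$.

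The only subtle point I anticipate is ensuring that the parallelism relation above is a classical pointwise identity rather than a merely distributional one once $E_j$ is only $C^1$; this is immediate from the fact that $E_j$ was constructed in Proposition \ref{Proposition: Extendingtoparallelvf} via iterated parallel transport along coordinate directions, so that the induced derivatives agree with the values prescribed by the connection. All subsequent bootstrap steps are then entirely routine.
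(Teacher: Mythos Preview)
Your proposal is correct. The paper states this corollary without proof, evidently regarding it as an immediate consequence of the same bootstrap mechanism used in Theorem \ref{thm: higherregularitysplitting} (where $\partial_\alpha\partial_\beta b = \Gamma^\gamma_{\alpha\beta}\partial_\gamma b$ is iterated); your argument spells out exactly that mechanism for the parallel frame equation $\partial_\alpha E_j^\beta = -\Gamma^\beta_{\alpha\gamma}E_j^\gamma$ and then for the chart inverse, and is entirely in keeping with the paper's methods.
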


Thus, we have shown that the usual notions of flatness (frame-flat, coordinate-flat, and curvature-flat) all agree for semi-Riemannian metrics of regularity $C^1$. We note that a variant of these results was established previously in Riemannian signature by Taylor \cite[Thm.\ 3.1]{taylor2006existence} using elliptic methods, which do not readily generalize to arbitrary semi-Riemannian signature.

Such questions appear in various low regularity rigidity problems, see e.g.\ the rigidity of the low regularity positive mass theorem in \cite{LeeLefloch2015Positivemasstheorem}. Below $C^1$-regularity, significant difficulties arise. E.g. for Lipschitz frames with vanishing Lie brackets (in a suitable sense), the integrated coordinates are usually only Lipschitz \cite{RampazzoSussmann2007Commutators}. We relegate to a future project the task of adapting the convolution and approximation based techniques to Lipschitz metrics in order to prove a variant of the flatness criterion in that regularity.

\section[Synthetic and distributional curvature bounds]{Synthetic and distributional curvature bounds for non-smooth Riemannian manifolds}
\label{Section: Syntheticdistributional}
When considering smooth manifolds endowed with a Riemannian metric of low regularity (usually less than $C^2$), two main notions of Ricci curvature bounds from below can be applied: the \emph{distributional} bound, i.e.\ $\Ric(X,X) \geq K$ in the sense of distributions for every $X \in C^{\infty}_c(TM)$, as seen in the previous sections of this paper, and the \emph{synthetic} $\mathsf{RCD}(K,N)$ condition on the metric measure space $(M,d,\Vol)$, with $N\geq \dim M$ (see \cite{ambrosiogiglisavare2014metric, erkust2015equivalence}).
This raises the natural question whether 
the two conditions are equivalent (and, if so, down to which regularity of $g$). First progress in this direction was provided by \cite{KOV:22}, while
more recently Mondino and Ryborz proved in \cite{mondino2024equivalence} the equivalence in the weighted case for $g\in W^{1,2}_\loc(\cT^0_2 M) \cap C^0(\cT^0_2 M)$, up to a volume growth bound.

We will now present our own version of said results, developed independently from \cite{mondino2024equivalence}. The second implication to be presented below is a slight improvement in a special case of \cite[Thm. 7.1]{mondino2024equivalence}, as it does not require the volume growth bound and we explicitly construct a pointed measured Gromov-Hausdorff approximating sequence of smooth Riemannian manifolds, while the first one is strictly weaker than \cite[Thm. 6.11]{mondino2024equivalence}. We will nevertheless present our proof, as our stronger hypotheses allow for more direct and transparent arguments.

\subsection{Compatibility between different notions of Sobolev spaces}
\label{Subsection: CompatibilitybetweennotionsofSobolev}
Unless otherwise stated, let $(M,g)$ be a (metrically) complete, connected Riemannian manifold with $C^{0,1}_\loc$ Riemannian metric $g$. Assume that the metric measure space $(M,d,\Vol)$ satisfies the $\mathsf{RCD}(K,\infty)$ condition for some $K\in \R$. We choose to assume initially the $\mathsf{RCD}(K,\infty)$ condition as it is a priori weaker than $\mathsf{RCD}(K,\dim M)$. Observe, however, that Theorem \ref{thm:RCDtodistributional} from the next section, together with Theorem \ref{thm:distribtosynth}, proves that the two conditions are equivalent in this setting. For $C^\infty$ Riemannian manifolds, this is well-known ( \cite[Cor.\ 2.6]{han2020measure}).

We shall assume from now on familiarity with the second order calculus on $\mathsf{RCD}(K,\infty)$ spaces developed by Gigli in \cite{gigli2018nonsmooth}. See also \cite{gigpas2020notes} for a more didactic survey; we will use the latter as main reference. The cited sources rely on a definition of Sobolev spaces, initially developed in \cite{cheeger1999sobolev} and \cite{amgisa2014calculus}, which is tailored towards general metric measure spaces, where a priori we do not have smooth coordinates. We point towards \cite{ambrosio2024metric} for a detailed overview of different (equivalent) methods to define Sobolev spaces on metric measure spaces. We will now explain why these, and the differential operators associated with them, coincide with the notion of Sobolev spaces we used so far.
\begin{Remark}
    We define
    \begin{gather*}
        W^{1,2}(M) := \left\{ f\in W^{1,2}_\loc(M): \int f^2 + \abs{\de f}^2 \,\de\Vol <+\infty\right\},\\
        \norm{f}_{W^{1,2}(M)}^2:= \norm{f}_{L^2(M)}^2 + \norm{\abs{\de f}}_{L^2(M)}^2.
    \end{gather*}
    Let us temporarily denote by $W^{1,2}_*(M)$ the Sobolev space defined on the metric measure space $(M,d,\Vol)$ and by $\de_*$ its associated differential operator. Observe that $C^\infty_c(M)\subseteq W^{1,2}(M)\cap W^{1,2}_*(M)$ because, due to the properties of the classical derivatives, the minimal weak upper gradient $\abs{\de_* f}$ and the norm of the differential $\abs{\de f}$ coincide {for any $f\in C^\infty_c(M)$}; a proof of this fact in the case $M=\R^n$ with the Euclidean metric can be found in \cite[Subsec.\ 2.1.5]{gigpas2020notes}, and it is easily seen to generalize to the class of metrics we consider. This can be extended to $C^{0,1}_c$ functions via mollification. Indeed we know that $C^{0,1}_c(M)\subseteq W^{1,2}(M)\cap W^{1,2}_*(M)$. Also, $f_n :=f \star_M \rho_\frac{1}{n} \rightarrow f$ in $W^{1,2}(M)$. In particular, $\lim_n \abs{\de_* f_n} = \lim_n \abs{\de f_n} = \abs{\de f}$. Thus, up to passing to subsequences, $\de_* f_n$ is a weakly converging sequence of 1-forms and by closure of $\de_*$ \cite[Thm. 4.1.2]{gigpas2020notes} we have that $\de_* f = \lim_n \de_* f_n$ and $\abs{\de_* f}=\lim_n \abs{\de_* f_n} = \abs{\de f}$. We can conclude that $\norm{f}_{W^{1,2}(M)} = \norm{f}_{W^{1,2}_*(M)}$.\\
    Observe that $C^{0,1}_c(M)$ is dense in both spaces: in $W^{1,2}(M)$ due to mollification and in $W^{1,2}_*(M)$ due to \cite[Lem. 6.2.12]{gigpas2020notes}, recalling that bounded supports are compact due to the completeness of $(M,g)$. We can conclude that $W^{1,2}(M) = W^{1,2}_*(M)$ and that the weak upper gradient and the norm of the weak differential coincide. As a byproduct, we also proved that $C^\infty_c(M)$ is dense in $W^{1,2}(M)$.
\end{Remark}
One important consequence of this remark is that due to the uniqueness of their constructions, all the tensor modules defined in \cite{gigpas2020notes} coincide with the spaces of $L^2$-tensor fields defined through coordinates.

We will call \emph{test functions}, \emph{test 1-forms} and \emph{test vector fields} the elements of, respectively,
\begin{align*}
    \Test(M)&:=\left\{f\in C^{0,1}(M)\cap L^\infty(M) \cap W^{1,2}(M):  \Delta f\in L^\infty(M) \cap W^{1,2}(M)  \right\},\\
    \TestF(M)&:=\left\{\sum_{i=1}^k f_i \,\de h_i \in L^2(T^*M): k\in \N,\, f_i,h_i \in \Test(M)  \right\}\\
    \TestV(M)&:=\left\{\sum_{i=1}^k f_i \nabla h_i \in L^2(TM): k\in \N,\, f_i,h_i \in \Test(M)  \right\}.
\end{align*}
Here, $\Delta$, $\de$ and $\nabla$ are understood in the distributional sense. Our definition of test functions is taken from \cite{gigpas2020notes}, but throughout the literature the definition can differ slightly. $\Test$ functions in metric measure spaces play the role that is played by $C^\infty_c$-functions in smooth Riemannian manifolds in the definitions of differential operators through integration by parts. As we lower the regularity of the Riemannian metric $g$, though, it is important to observe that $C^\infty_c$ functions may not be test functions anymore. The following result shows that such a discrepancy does not occur if $g \in C^{0,1}_\loc \cap W^{2,2}_{\loc}$:
\begin{Proposition}\label{lem:temp_1}
    Let $(M,g)$ be a complete, connected Riemannian manifold with $g\in C^{0,1}_\loc(\cT^0_2 M) \cap W^{2,2}_\loc(\cT^0_2 M)$, such that $(M,d,\Vol)$ is an $\mathsf{RCD}(K,\infty)$ space for some $K\in \R$.  Then 
    \begin{equation*}
        C^\infty_c(M)\subseteq \Test(M), \qquad C^\infty_c(T^*M)\subseteq \TestF(M).
    \end{equation*}
    Furthermore, if $g\in C^{1,1}_\loc(\cT^0_2 M) \cap W^{3,2}_\loc(\cT^0_2 M)$ then
    \begin{equation*}
        C^\infty_c(TM)\subseteq \TestV(M).
    \end{equation*}
\end{Proposition}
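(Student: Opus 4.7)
The plan is to prove the three inclusions sequentially, with the first being the core computation and the other two following by a standard partition-of-unity plus cutoff trick.

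For $C^\infty_c(M) \subseteq \Test(M)$, fix $f \in C^\infty_c(M)$. The inclusions $f \in C^{0,1}(M) \cap L^\infty(M) \cap W^{1,2}(M)$ are immediate, so everything reduces to showing $\Delta f \in L^\infty(M) \cap W^{1,2}(M)$. In local coordinates one has
\begin{equation*}
    \Delta f = g^{\alpha\beta} \partial_\alpha \partial_\beta f + \bigl(\partial_\alpha g^{\alpha\beta} + g^{\alpha\beta} \partial_\alpha \log \sqrt{|g|}\bigr) \partial_\beta f.
\end{equation*}
Since $\supp(\Delta f) \subseteq \supp(f)$ is compact and all $g$-coefficients lie in $L^\infty_\loc$ under $g \in C^{0,1}_\loc$, we get $\Delta f \in L^\infty(M)$. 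Differentiating once more produces terms of three types: smooth bounded terms (from $\partial^3 f$), products of $L^\infty_\loc$ functions with smooth compactly supported functions (harmless), and finally the term $\partial_\gamma \partial_\alpha g^{\alpha\beta} \cdot \partial_\beta f$, which involves a distributional second derivative of $g$. Precisely this term is handled by the $W^{2,2}_\loc$ assumption: it lies in $L^2_\loc$, and hence (by compact support) in $L^2(M)$. Therefore $\Delta f \in W^{1,2}(M)$, giving $f \in \Test(M)$.

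For $C^\infty_c(T^*M) \subseteq \TestF(M)$, take $\omega \in C^\infty_c(T^*M)$ and a finite collection of chart domains $(U_\alpha, \psi_\alpha = (x^1_\alpha, \dots, x^n_\alpha))$ covering $\supp \omega$, together with a subordinate partition of unity $\{\phi_\alpha\} \subseteq C^\infty_c(M)$ and cutoffs $\eta_\alpha \in C^\infty_c(U_\alpha)$ with $\eta_\alpha \equiv 1$ on a neighborhood of $\supp \phi_\alpha$. Writing $\omega = \sum_{\alpha,i} \omega_{\alpha,i}\, dx^i_\alpha$ on each chart and applying the Leibniz rule $d(\eta_\alpha x^i_\alpha) = x^i_\alpha\, d\eta_\alpha + \eta_\alpha\, dx^i_\alpha$, one checks the global identity
\begin{equation*}
    \phi_\alpha \omega_{\alpha,i}\, dx^i_\alpha = (\phi_\alpha \omega_{\alpha,i})\, d(\eta_\alpha x^i_\alpha) - (\phi_\alpha \omega_{\alpha,i} x^i_\alpha)\, d\eta_\alpha,
\end{equation*}
valid globally because $\phi_\alpha$ vanishes wherever $\eta_\alpha \neq 1$. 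All functions appearing as coefficients or as $h_j$'s belong to $C^\infty_c(M)$ (with $\eta_\alpha x^i_\alpha$ and $\eta_\alpha$ extended by zero), and hence to $\Test(M)$ by the first part. Summing over $\alpha$ and $i$ yields $\omega \in \TestF(M)$.

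The third inclusion $C^\infty_c(TM) \subseteq \TestV(M)$ under the stronger regularity hypothesis is treated analogously, with the algebraic identity adjusted to account for the musical isomorphism. Locally $X = X^k \partial_k$, and since $\nabla(\eta_\alpha x^j_\alpha) = \eta_\alpha g^{aj} \partial_a + x^j_\alpha g^{ab} \partial_b \eta_\alpha\, \partial_a$, multiplying by $\phi_\alpha g_{kj}$ and using $\phi_\alpha \partial_b \eta_\alpha \equiv 0$ yields the global identity
\begin{equation*}
    \phi_\alpha X = \sum_{k,j} \bigl(\phi_\alpha X^k g_{kj}\bigr)\, \nabla\bigl(\eta_\alpha x^j_\alpha\bigr).
\end{equation*}
The functions $\eta_\alpha x^j_\alpha$ are smooth and compactly supported, hence in $\Test(M)$. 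The main point — and the reason the stronger regularity is needed — is that the coefficient $\phi_\alpha X^k g_{kj}$ must itself be a test function: under $g \in C^{1,1}_\loc$ one has $\phi_\alpha X^k g_{kj} \in C^{1,1}_c \subseteq C^{0,1} \cap L^\infty \cap W^{1,2}$, and the same local-coordinate analysis as in the first paragraph shows that the Laplacian of such a function lies in $L^\infty$ precisely when $g \in C^{1,1}_\loc$ (bounding second derivatives of $g_{kj}$) and in $W^{1,2}$ when $g \in W^{3,2}_\loc$ (controlling the relevant third distributional derivative of $g$). The main obstacle across the three steps is exactly this bookkeeping of how one further derivative in $W^{1,2}$ propagates into one more derivative of $g$, which is why the regularity threshold for vector fields is shifted by one relative to that for functions and $1$-forms.
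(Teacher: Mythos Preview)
Your proof is correct and follows essentially the same approach as the paper's. The only noteworthy organizational difference is that the paper establishes the first inclusion directly for the broader class $C^{1,1}_c(M)\cap W^{3,2}_\loc(M)\subseteq \Test(M)$, which then applies verbatim to the coefficient functions $X^\alpha g_{\alpha\beta}$ in the vector-field step; you instead prove step one for $C^\infty_c$ and then re-run the coordinate computation for the $C^{1,1}_c$ coefficients at the end. Your partition-of-unity correction terms in the $1$-form and vector-field steps actually vanish identically (since $\phi_\alpha\,d\eta_\alpha\equiv 0$), so the paper's simpler ``WLOG support in a single chart, cut off $x^\alpha$'' is slightly more economical, but there is no substantive difference. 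One small imprecision: in your derivative count for $\partial_\gamma\Delta f$, the $\partial_\alpha\log\sqrt{|g|}$ term also produces a second-derivative-of-$g$ contribution (via $g^{ij}\partial_\gamma\partial_\alpha g_{ij}$), not just $\partial_\gamma\partial_\alpha g^{\alpha\beta}$; it is handled by exactly the same $W^{2,2}_\loc$ argument, but should be mentioned.
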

\begin{proof}
    Let $f\in C^{1,1}_c(M) \cap W^{3,2}_\loc(M)$. Clearly $f$ is bounded and Lipschitz continuous. Also using the Sobolev chain rule  and Leibniz rule for derivations, we have
    \begin{align*}
        \Delta f = \frac{1}{\sqrt{\abs{g} }} \partial_\alpha \left(g^{\alpha\beta} \sqrt{\abs{g}} \partial_\beta f\right) \in W^{1,2}(M)\cap L^\infty(M).
    \end{align*}
    Given that $C^\infty_c(M)\subseteq C^{1,1}_c(M) \cap W^{3,2}_\loc(M)$, we have proven the first inclusion.\\
    Now consider any $\omega\in C^\infty_c(M)$. Then without loss of generality we can assume $\supp \omega$ is contained in a chart and $\omega = \omega_\alpha \de x^\alpha$, with $\omega_\alpha \in C^\infty_c(M)\subseteq \Test (M)$ and, up to multiplication with an appropriate cutoff function, $x^\alpha\in C^\infty_c (M) \subseteq \Test (M)$.\\
    Finally consider any $X\in C^\infty_c(TM)$. The argument is the same as the previous point, decomposing $X$ into $X= X^\alpha g_{\alpha\beta} \nabla x^\beta$. Then $X^\alpha g_{\alpha\beta} \in C^{1,1}_c(M) \cap W^{3,2}_\loc(M)\subseteq \Test(M)$ and, again up to multiplication with a cutoff function, $x^\alpha\in C^\infty_c (M) \subseteq \Test (M)$.
\end{proof}

\begin{Example}\label{Example: Counterexample to Cinfty subset test}
    The inclusion $C^\infty_c(M)\subseteq \Test(M)$ may fail at lower regularities:
    Consider $\R^n$ with the metric $g\in C^{0,1}_\loc(\R^n)\cap W^{2,1}_\loc(\R^n)$ defined via
    \begin{align*}
        g:=\begin{pmatrix}
                \frac{1}{1+\abs{x_1}^{\frac{3}{2}}} &  & & \\
                 & 1 & & \\
                 &  &\ddots & \\
                 & & &1
                \end{pmatrix}.
    \end{align*}
    It is easy to check that $\partial_1\partial_1 {g_{11}}$ behaves asymptotically near $0$ like $|x_1|^{-\frac{1}{2}}$, hence it is not in $L^2_\loc(\R^n)$. Observe that the distributional Riemann curvature tensor vanishes (due to the product structure of $g$), thus in particular $\Ric = 0$. Now take $\varphi \in C^\infty_c(\R^n)$ such that $\varphi= x_1$ locally near $0$. Then after computing $\Delta \varphi$ near $0$ we get
    \begin{align*}
        \Delta \phi &=  \frac{1}{\sqrt{\abs{g}}} \partial_\alpha (\sqrt{\abs{g}} g^{\alpha\beta} \partial_\beta (x_1))
        =  \frac{1}{\sqrt{g_{11}}} \partial_1 (\sqrt{g_{11}} g^{11})\\
        &= \sqrt{g^{11}} \partial_1 \sqrt{g^{11}}
        = \frac{1}{2}\partial_1 g^{11} \notin W^{1,2}_\loc(\R^n).
    \end{align*}
    This shows that $\varphi\notin\Test(M)$.
\end{Example}
Let us go back to the case $g\in C^{0,1}_\loc(\cT^0_2 M)$. Due to low regularity phenomena as illustrated by Example \ref{Example: Counterexample to Cinfty subset test}, the differential operators defined in \cite{gigpas2020notes} are not a priori the same as their distributional counterparts. The exceptions to this are $\Delta$ and $\Div$, as the density of $C^\infty_c(M)$ in $W^{1,2}(M)$ ensures that $\Delta$ and $\Div$ as defined in \cite{gigpas2020notes} are equivalent to the respective distributional operators whenever they take values in $L^2(\meas)$. The reason is that they are defined  via integration by parts with respect to test objects as defined above.
\begin{Proposition}
    Let $\omega \in C^{0,1}_c(T^*M)$. Then its weak exterior differential $\de \omega$ (defined via weak derivatives) is also its exterior differential in the sense of \cite[Def. 6.4.2]{gigpas2020notes}, i.e.
    \begin{equation*}
        \int \de\omega(X_1,X_2)\,\de\Vol = \int \omega(X_1)\Div X_2 - \omega(X_2)\Div X_1 - \omega([X_1,X_2])\,\de\Vol
    \end{equation*}
    for all $X_1,X_2\in \TestV(M)$.
\end{Proposition}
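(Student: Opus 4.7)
The plan is a mollify-and-pass-to-the-limit argument; the main work is checking that every low-regularity object on either side is well-defined in a function space that accommodates the limit. For any test vector field $X=\sum_i f_i \nabla h_i \in \TestV(M)$ with $f_i,h_i\in\Test(M)$, the Leibniz rule gives $\Div X = \sum_i[g(\nabla f_i,\nabla h_i)+f_i\Delta h_i]\in L^\infty(M)$, since the $f_i,h_i$ are bounded and Lipschitz and $\Delta h_i\in L^\infty(M)$ by definition of $\Test(M)$. In local coordinates, $[X_1,X_2]^k = X_1^j\partial_j X_2^k - X_2^j\partial_j X_1^k$ involves products of bounded Lipschitz components with their a.e.\ derivatives, so $[X_1,X_2]\in L^\infty_\loc(TM)$. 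Since $\omega\in W^{1,\infty}_\loc(T^*M)$ has compact support, the weak exterior differential $\de\omega$ is a bounded, compactly supported $2$-form. Consequently all integrands in the asserted identity are in $L^1_c(M)$.

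To prove the identity, use a partition of unity subordinate to a chart cover of $\supp\omega$ and reduce to the case where $\omega$ is supported in a single chart $(U,\psi)$. Let $\omega^{(n)}\in C^\infty_c(T^*M)$ denote the componentwise Euclidean mollification of $\omega$; then $\supp \omega^{(n)}$ stays in a fixed compact neighborhood of $\supp\omega$, $\omega^{(n)}\to\omega$ uniformly, and $\de\omega^{(n)}\to\de\omega$ in $L^p_\loc$ for every $p\in[1,\infty)$. For each smooth $\omega^{(n)}$ the classical Cartan-type identity
\begin{equation*}
\de\omega^{(n)}(X_1,X_2) = X_1(\omega^{(n)}(X_2)) - X_2(\omega^{(n)}(X_1)) - \omega^{(n)}([X_1,X_2])
\end{equation*}
holds almost everywhere, as it requires only the smoothness of $\omega^{(n)}$ and the Lipschitz regularity of $X_1,X_2$. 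Integrating against $\de\Vol$ and applying the integration by parts formula $\int X_i(f)\,\de\Vol = -\int f\,\Div X_i\,\de\Vol$ to the first two terms on the right yields the desired identity for $\omega^{(n)}$. This integration by parts is legitimate because $f=\omega^{(n)}(X_j)\in C^{0,1}_c(M)\subseteq W^{1,2}(M)$ and $\Div X_i\in L^2(M)$, combined with the density of $C^\infty_c(M)$ in $W^{1,2}(M)$ recorded in the preceding subsection.

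The final step is to send $n\to\infty$ by dominated convergence on each of the four integrals. On the left, $\de\omega^{(n)}\to\de\omega$ in $L^1_\loc$ paired with $X_1,X_2\in L^\infty_\loc$ on the fixed compact support gives the convergence. On the right, the uniform convergence $\omega^{(n)}\to\omega$ combined with $\Div X_i\in L^\infty$ and $[X_1,X_2]\in L^\infty_\loc$ handles the three respective terms. I expect the main (mild) obstacle to be purely organizational, namely tracking which low-regularity space each factor lives in so that the products and integration by parts are legitimate for the merely Lipschitz metric $g$; this last point is resolved by the identification $W^{1,2}(M)=W^{1,2}_*(M)$ with matching norms established above, which lets us treat $\Div$ in the distributional sense interchangeably with its metric-measure-space counterpart.
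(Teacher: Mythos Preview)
Your mollification strategy is valid and genuinely different from the paper's argument. The paper avoids approximating $\omega$ altogether: it writes $\omega = f\,\de x$ locally with $f \in C^{0,1}_c(M)$, $x \in C^\infty_c(M)$, computes $\int \de\omega(X_1,X_2)\,\de\Vol = \int \de f(X_1)\de x(X_2) - \de f(X_2)\de x(X_1)\,\de\Vol$ directly, and then integrates by parts via the identity $\Div(\de x(X_2)X_1) = \de x(X_2)\Div X_1 + X_1(X_2(x))$. Your route is more generic and would adapt to forms not of the shape $f\,\de x$; the paper's is shorter and sidesteps the limit step.

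There is, however, a concrete error in your regularity bookkeeping. You claim that $X_1,X_2\in\TestV(M)$ have \emph{Lipschitz} components, and hence that $[X_1,X_2]\in L^\infty_\loc(TM)$ and $\omega^{(n)}(X_j)\in C^{0,1}_c(M)$. This is not justified: for $h\in\Test(M)$ one only has $h\in C^{0,1}$ and $\Delta h\in L^\infty\cap W^{1,2}$, which through the $H^{2,2}$-regularity of test functions in the $\mathsf{RCD}$ calculus yields $\nabla h$ with components in $W^{1,2}_\loc$, not $C^{0,1}_\loc$. Thus the components of $X_j$ lie only in $L^\infty_\loc\cap W^{1,2}_\loc$, the bracket $[X_1,X_2]$ is only in $L^2_\loc$, and $\omega^{(n)}(X_j)\in W^{1,2}_c(M)$. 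Fortunately none of this breaks your argument: the Cartan identity for smooth $\omega^{(n)}$ still holds a.e.\ under $W^{1,2}_\loc$-regularity of the $X_j$; dominated convergence for the bracket term needs only $[X_1,X_2]\in L^1$ on the fixed compact support; and $\omega^{(n)}(X_j)\in W^{1,2}(M)$ is precisely the regularity your integration by parts against $\Div X_i\in L^2$ requires. You should also make explicit, as the paper does via \cite[Prop.\ 6.3.7]{gigpas2020notes} and the inclusion $C^\infty_c(M)\subseteq D(\Delta)$, that the Lie bracket appearing in \cite[Def.\ 6.4.2]{gigpas2020notes} agrees with the coordinate expression $X_1^j\partial_j X_2^k - X_2^j\partial_j X_1^k$ you compute with.
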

\begin{proof}
    From \cite[Prop. 6.3.7]{gigpas2020notes} we know that in the case of $X_1,X_2\in \TestV(M)$ the vector field $[X_1,X_2]$ is exactly the one associated with the derivation $H^{2,2}(M)\cap C^{0,1}(M)\ni f \mapsto X_1(X_2(f)) - X_2(X_1(f))$. Since $C^\infty_c(M)\subseteq D(\Delta)\subseteq H^{2,2}(M)$, $[X_1,X_2]$ coincides with the usual Lie bracket of vector fields.
    
    Without loss of generality we may assume that the support of $\omega$ is contained in a single chart and that $\omega = f \de x$ for some $f\in C^{0,1}_c(M),\,  x\in C^\infty_c(M)$. Then
    \begin{align*}
        &\int \de\omega(X_1,X_2)\,\de\Vol = \int \de f(X_1) \de x(X_2) -\de f(X_2) \de x(X_1) \,\de\Vol\\
        &= -\int[ \Div (\de x(X_2) X_1) - \Div (\de x(X_1)  X_2 )] f \,\de\Vol.
    \end{align*}
    Observe that in coordinates
    \begin{align*}
        {\Div (\de x(X_1) X_2)}
        &= \frac{1}{\sqrt{\abs{g}}}\partial_\alpha (\sqrt{\abs{g}} \de x (X_1) X_2^\alpha) = \de x (X_1) \Div X_2 + X_2^\alpha \partial_\alpha (\de x (X_1)) \\&= \de x (X_1) \Div X_2 + X_2(X_1(x)).
    \end{align*}
    Similarly ${\Div (\de x(X_2) X_1)}
    =  \de x (X_2) \Div X_1 + X_1(X_2(x))$. Both are well defined $L^1_\loc(M)$ functions.
    Thus we conclude
    \begin{align*}
        &\int \de\omega(X_1,X_2)\,\de\Vol\\
        &= -\int f [ \de x(X_2) \Div X_1 - \de x(X_1) \Div X_2 + X_1(X_2(x)) - X_2(X_1(x))] \,\de\Vol\\
        &=\int \omega(X_1)\Div X_2 - \omega(X_2)\Div X_1 - \omega([X_1,X_2])\,\de\Vol.
    \end{align*}
\end{proof}

\begin{Proposition}
    Let $\omega \in C^{0,1}_c(T^*M)$. Then its weak codifferential $\delta \omega$ is also its codifferential in the sense of \cite[Def. 6.4.16]{gigpas2020notes}, i.e.
    \begin{equation*}
        \int g( \omega,\de f )\,\de\Vol = \int f \delta\omega \,\de\Vol
    \end{equation*}
    for all $f\in \Test(M)$.
\end{Proposition}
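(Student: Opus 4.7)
The strategy is to reduce to the classical integration by parts formula via approximation, exploiting the compact support of $\omega$ to manage the low regularity of $f$. First, since $\omega \in C^{0,1}_c(T^*M)$, fix a smooth cutoff $\chi \in C^\infty_c(M)$ with $\chi \equiv 1$ on a neighborhood $V$ of $\supp \omega$. For any $f \in \Test(M)$, the product $\chi f$ lies in $C^{0,1}_c(M)$, and on $V$ we have $\chi f = f$ and $d(\chi f) = df$ (in the weak sense). Since $\omega$ and $\delta\omega$ are both supported in $\supp \omega \subseteq V$, the claimed identity is equivalent to
\begin{equation*}
    \int g(\omega, d(\chi f)) \, \de\Vol = \int (\chi f)\, \delta\omega \, \de\Vol.
\end{equation*}
Thus it suffices to establish the integration by parts formula
\begin{equation*}
    \int g(\omega, dh) \, \de\Vol = \int h\, \delta\omega \, \de\Vol \quad \text{for all } h \in C^{0,1}_c(M). \tag{$\star$}
\end{equation*}

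To prove $(\star)$, I would set $h_n := h \star_M \rho_{1/n} \in C^\infty_c(M)$, noting that for large $n$ the supports of $h_n$ are all contained in a common compact set. Since $h$ is Lipschitz with compact support, $h_n \to h$ uniformly and $dh_n \to dh$ in $L^p_{\loc}(T^*M)$ for every $p < \infty$ (with uniform $L^\infty$ bound). For each smooth $h_n$, the classical integration by parts in coordinates gives $\int g(\omega, dh_n) \, \de\Vol = \int h_n\, \delta\omega \, \de\Vol$: writing locally $g(\omega, dh_n)\sqrt{|g|} = \sqrt{|g|}\, g^{\alpha\beta}\omega_\alpha \partial_\beta h_n$ and integrating by parts (the product $\sqrt{|g|}\, g^{\alpha\beta}\omega_\alpha$ is Lipschitz with compact support, so its distributional derivative coincides a.e.\ with the classical derivative, which matches the coordinate expression for $\sqrt{|g|}\,\delta\omega$). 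Passing to the limit $n \to \infty$ on both sides is immediate: on the left $dh_n \to dh$ in $L^2$ while $\omega$ is bounded and compactly supported, and on the right $h_n \to h$ uniformly while $\delta\omega \in L^\infty_c(M)$.

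The mildly delicate step is the classical integration by parts at the Lipschitz-coefficient level, since the formula $\delta\omega = -\sum_i \nabla_{e_i}\omega(e_i)$ is only pointwise a.e. However, the computation
\begin{equation*}
    \partial_\beta\bigl(\sqrt{|g|}\, g^{\alpha\beta}\omega_\alpha\bigr) = -\sqrt{|g|}\, \delta\omega
\end{equation*}
(valid distributionally because all factors are Lipschitz, so the Leibniz rule holds and matches the explicit coordinate formula involving Christoffel symbols) reduces the Lipschitz case to the elementary integration by parts formula for a Lipschitz function against a smooth compactly supported test function in $\R^n$. No further regularity of $f$ beyond $f \in W^{1,2}_{\loc}$ is required, so the hypothesis $f \in \Test(M)$ is comfortably sufficient.
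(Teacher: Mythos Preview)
Your proof is correct and follows essentially the same approximation strategy as the paper: reduce to the case of smooth compactly supported test functions (for which the identity holds by definition of the weak codifferential) and pass to the limit. The paper's version is more streamlined---it simply notes that the identity holds for $f\in C^\infty_c(M)$ by definition and extends by density of $C^\infty_c(M)$ in $W^{1,2}(M)\supseteq \Test(M)$, thereby avoiding your cutoff step and the explicit coordinate verification.
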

\begin{proof}
    Recall that by definition
    \begin{equation*}
        \int g( \omega,\de f )\,\de\Vol = \int f \delta\omega \,\de\Vol
    \end{equation*}
    already holds for all $f\in C^\infty_c(M)$. By density the above formula can be extended to all $f\in W^{1,2}(M)$, thus including $f\in \Test(M)$.
\end{proof}
The two previous propositions can be summarized by saying that $C^{0,1}_c(T^*M)\subseteq W_H^{1,2}(T^*M)$, where
\begin{align*}
    W_H^{1,2}(T^*M)&:=\left\{ \omega\in L^2(T^*M): \omega \text{ admits } \de\omega\in L^2(\cT^0_2 M),\;  \delta \omega \in L^2(M) \text{ in the sense of \cite{gigpas2020notes}}\right\}.
\end{align*}
Since $\TestF(M)\subseteq W^{1,2}_H(T^*M)$, we denote by $H^{1,2}_H(T^*M)$ the closure of $\TestF(M)$ with respect to the norm
\begin{align*}
    \norm{ \omega }^2_{W^{1,2}_H(T^*M)}:=\norm{ \omega }^2_{L^2(T^*M)}+\norm{ \de\omega }^2_{L^2(\cT^0_2 M)}+\norm{ \delta \omega}^2_{L^2(M)}.
\end{align*}
Analogous definitions can be given to vector fields.
\begin{align*}
    W_H^{1,2}(TM)&:=\left\{X\in L^2(TM) : X^\flat \in W^{1,2}_H(T^*M)\right\},\\
    H^{1,2}_H(TM) &:= \left\{X\in L^2(TM) : X^\flat \in H^{1,2}_H(T^*M)\right\}.
\end{align*}
We will require the following lemma on approximations via test functions.
\begin{Lemma}
    Let $f\in C^{0,1}_c(M)$. Then there exists a sequence $\{f_n\}_n$ of $\Test(M)$ functions such that $f_n\to f$ in $W^{1,2}(M)$ with $\norm{f_n}_{L^\infty},\, \norm{\de f_n}_{L^\infty}$ uniformly bounded.\\
    Let $x\in C^\infty_c(M)$. 
    Then there exists a sequence $\{x_n\}_n$ in $\Test(M)$ such that $x_n\to x$ in $W^{1,2}(M)$ and $\Delta x_n \to \Delta x$ in $L^2(M)$.
\end{Lemma}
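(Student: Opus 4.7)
The plan is to employ the heat semigroup $(h_t)_{t>0}$ of the $\RCD(K,\infty)$ space $(M,d,\Vol)$, exploiting its $L^\infty$-contraction, Bakry--\'Emery gradient bound, and strong continuity in $W^{1,2}$. For the first claim, the naive choice $f_n = h_{1/n} f$ fails because one has no $L^\infty$ control on $\Delta h_t f$ as $t \searrow 0$; I would instead mollify in the time parameter. Pick non-negative $\kappa_n \in C^\infty_c((0,\infty))$ with $\int \kappa_n = 1$ and $\supp \kappa_n \subseteq [1/n, 2/n]$, and set
\begin{equation*}
f_n := \int_0^\infty \kappa_n(t)\, h_t f \,\de t,
\end{equation*}
understood as a Bochner integral in $L^2(M)$.

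The bound $\|f_n\|_{L^\infty} \le \|f\|_{L^\infty}$ is immediate from $L^\infty$-contractivity, while the Bakry--\'Emery inequality $|\nabla h_t f| \le e^{-Kt}\sqrt{h_t(|\nabla f|^2)}$ combined with the Sobolev-to-Lipschitz property of $\RCD$ spaces yields $\|\de f_n\|_{L^\infty} \le C\|\de f\|_{L^\infty}$ uniformly in $n$ (for $n$ large). The key point is that integration by parts in $t$, using $\partial_t h_t f = \Delta h_t f$, gives
\begin{equation*}
\Delta f_n \;=\; -\int_0^\infty \kappa_n'(t)\, h_t f \,\de t,
\end{equation*}
so $\|\Delta f_n\|_{L^\infty} \le \|\kappa_n'\|_{L^1}\|f\|_{L^\infty}$ and similarly $\Delta f_n \in W^{1,2}$ via the $L^2$- and $W^{1,2}$-contractivity of $h_t$ applied to $f$. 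Hence $f_n \in \Test(M)$. Convergence $f_n \to f$ in $W^{1,2}(M)$ follows from $\|f_n - f\|_{W^{1,2}} \le \int \kappa_n(t) \|h_t f - f\|_{W^{1,2}}\,\de t$ and the strong continuity of $h_t$ in $W^{1,2}$, since $\kappa_n$ concentrates at $t=0$.

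For the second claim, $g\in C^{0,1}_\loc$ and $x\in C^\infty_c(M)$ imply that the distributional Laplacian $\Delta x$ is bounded with compact support, hence in $L^2(M)$; by integration by parts against functions in $\TestF$, $\Delta x$ coincides with the $\RCD$-Laplacian, so $x\in D(\Delta)$. Setting $x_n := h_{1/n} x$, the same arguments as above (without mollification) give $x_n \in \Test(M)$: the bound on $\Delta x_n$ now comes for free since $\Delta x_n = h_{1/n}(\Delta x) \in L^\infty \cap D(\Delta) \subseteq L^\infty \cap W^{1,2}$ by commutation with the semigroup. Strong continuity of $h_t$ on $L^2$ then yields $\Delta x_n = h_{1/n}(\Delta x) \to \Delta x$ in $L^2(M)$, and $x_n \to x$ in $W^{1,2}$ as before.

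The main obstacle I expect is the $L^\infty$ and $W^{1,2}$ control on $\Delta f_n$ for the first claim; plain heat-flow regularization is insufficient because $\|\Delta h_t f\|_{L^\infty}$ need not even be finite. Time-mollification circumvents this by re-expressing $\Delta f_n$ as a smooth weighted integral of $h_t f$ itself, trading an $n$-dependent constant $\|\kappa_n'\|_{L^1}$ (harmless, since the lemma only asks for uniform bounds on $f_n$ and $\de f_n$, not on $\Delta f_n$) for a genuine $L^\infty \cap W^{1,2}$ bound at each fixed $n$.
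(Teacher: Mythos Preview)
Your proposal is correct and takes essentially the same approach as the paper. The paper's own proof is just two citations: the first statement is attributed to \cite[Section 3.2]{gigli2018nonsmooth}, where precisely your time-mollified heat flow construction $f_n = \int \kappa_n(t) h_t f\,\de t$ is carried out, and the second statement is obtained ``by definition upon applying the heat flow to $x$'', i.e.\ your choice $x_n = h_{1/n} x$. You have effectively reconstructed the content behind those references, including the key observation that time-averaging converts the a priori uncontrolled $\Delta h_t f$ into $-\int \kappa_n'(t) h_t f\,\de t$.
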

\begin{proof}
    The first statement has been proven in \cite[Section 3.2]{gigli2018nonsmooth}. The second statement follows by definition upon applying the heat flow to $x$ (\cite[Ch.\ 5]{gigpas2020notes}).
\end{proof}

\begin{Proposition}\label{prop:H12vectors}
    It holds that $C^\infty_c(TM)\subseteq H_H^{1,2}(TM)$.
    
\end{Proposition}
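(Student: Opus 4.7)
The plan is to exhibit, for any $X\in C^\infty_c(TM)$, an explicit approximating sequence of test $1$-forms converging to $X^\flat$ in the norm of $W^{1,2}_H(T^*M)$, which by definition of $H^{1,2}_H$ will give the claim. First I would localize: using a partition of unity subordinate to a chart cover, it suffices to treat the case where $X$ is supported in a single chart $(U,(x^\alpha))$. Let $\chi\in C^\infty_c(U)$ be a cutoff equal to $1$ on a neighborhood of $\supp X$, and set $\tilde x^\beta := \chi x^\beta \in C^\infty_c(M)$. Then on $\supp X$ one has $\de\tilde x^\beta = \de x^\beta$, and writing $X=X^\alpha\partial_\alpha$ we obtain the global identity
\[
X^\flat \;=\; \sum_{\alpha,\beta} f_{\alpha\beta}\,\de\tilde x^\beta, \qquad f_{\alpha\beta} := g_{\alpha\beta}X^\alpha \in C^{0,1}_c(M),
\]
because $g\in C^{0,1}_{\loc}$ and $X^\alpha\in C^\infty_c$. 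Thus $X^\flat$ is a finite sum of the elementary form $f\,\de h$ with $f\in C^{0,1}_c(M)$ and $h\in C^\infty_c(M)$, and it suffices to approximate a single such summand by test $1$-forms in $W^{1,2}_H(T^*M)$.

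Next I would invoke the previous lemma to approximate the two factors separately: choose $f_n\in\Test(M)$ with $f_n\to f$ in $W^{1,2}(M)$ and $\|f_n\|_{L^\infty}+\|\de f_n\|_{L^\infty}$ uniformly bounded, and $h_n\in\Test(M)$ with $h_n\to h$ in $W^{1,2}(M)$ and $\Delta h_n\to \Delta h$ in $L^2(M)$ (the latter uses that $h\in C^\infty_c\subseteq\Test(M)$ by Proposition \ref{lem:temp_1}, which is applicable once we note that this proposition requires higher regularity than $C^{0,1}_{\loc}$; however, for $h\in C^\infty_c$ we may simply take $h_n\equiv h$, which is already in $\Test(M)$ by the same argument as in Proposition \ref{lem:temp_1} restricted to the observation that $\Delta h\in L^\infty\cap W^{1,2}$ holds for compactly supported smooth $h$ under $C^{0,1}_{\loc}$-metrics). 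The candidate approximating forms are then $\omega_n := f_n\,\de h_n \in \TestF(M)$.

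The last step is to verify the three convergences defining the $W^{1,2}_H$-norm. For the $L^2$-convergence $\omega_n\to f\,\de h$, combine $f_n\to f$ in $L^2$ and $\sup_n\|f_n\|_{L^\infty}<\infty$ with $\de h_n\to \de h$ in $L^2$ (and its $L^\infty$-bound). For the exterior derivative, $\de\omega_n = \de f_n\wedge \de h_n$ converges to $\de f\wedge \de h$ in $L^2(\cT^0_2 M)$ by the same product argument, using the uniform $L^\infty$-bound on $\de f_n$. For the codifferential, expand
\[
\delta\omega_n \;=\; -g(\de f_n,\de h_n) - f_n\,\Delta h_n,
\]
an identity one checks first for $f_n,h_n\in C^\infty$ and extends to test functions by the integration-by-parts definition of $\delta$. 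Then $g(\de f_n,\de h_n)\to g(\de f,\de h)$ in $L^2$ by the uniform $L^\infty$-bound on $\de f_n$, and $f_n\Delta h_n\to f\Delta h$ in $L^2$ by the uniform $L^\infty$-bound on $f_n$ together with the $L^2$-convergence of $\Delta h_n$. Combining, $\omega_n\to f\,\de h$ in $W^{1,2}_H(T^*M)$, so $f\,\de h\in H^{1,2}_H(T^*M)$, and summing the finitely many pieces gives $X^\flat\in H^{1,2}_H(T^*M)$.

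The main obstacle is the codifferential term: $\delta$ is a first-order operator, so convergence of $\de h_n$ alone is insufficient and one genuinely needs the stronger control $\Delta h_n\to\Delta h$ in $L^2$ paired with the uniform $L^\infty$-bounds on $f_n$. The factorization of $X^\flat$ as a sum of elementary products $f\,\de h$ with smooth $h$ is what makes this available, since we only need the heat-flow approximation on the smooth factor while the merely Lipschitz factor is controlled in $L^\infty$.
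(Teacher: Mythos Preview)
Your overall strategy coincides with the paper's: localize to a chart, write $X^\flat$ as a finite sum of terms $f\,\de h$ with $f\in C^{0,1}_c(M)$ and $h\in C^\infty_c(M)$, invoke the approximation lemma to produce $f_n,h_n\in\Test(M)$, set $\omega_n=f_n\,\de h_n$, and check convergence of $\omega_n$, $\de\omega_n$, $\delta\omega_n$ in $L^2$. This is exactly what the paper does.

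There is, however, a genuine error in your parenthetical justification. You claim that for $h\in C^\infty_c(M)$ one has $\Delta h\in L^\infty\cap W^{1,2}$ under a merely $C^{0,1}_{\loc}$-metric, and hence that one may take $h_n\equiv h$. This is false: Example~\ref{Example: Counterexample to Cinfty subset test} exhibits a $C^{0,1}_{\loc}$-metric and a $\varphi\in C^\infty_c$ with $\Delta\varphi\notin W^{1,2}_{\loc}$, so in general $C^\infty_c(M)\not\subseteq\Test(M)$. The shortcut $h_n\equiv h$ is therefore not available.

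The fix is simply to delete the parenthetical and use the lemma as stated. Its second clause only assumes $h\in C^\infty_c(M)$, not $h\in\Test(M)$: since $\Delta h\in L^\infty$ (compact support) $\subseteq L^2$, one has $h\in D(\Delta)$, and the heat flow $h_n:=P_{t_n}h$ lies in $\Test(M)$ with $h_n\to h$ in $W^{1,2}$ and $\Delta h_n=P_{t_n}\Delta h\to\Delta h$ in $L^2$. With this correction your argument is complete and matches the paper's proof.
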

\begin{proof}
    Take $V\in C^\infty_c(TM)$, and without loss of generality suppose that
    $\supp V$ is fully contained in a chart domain. In this case we can treat smooth coordinate functions $x^\alpha$ as $C^\infty_c(M)$ functions, since we can just multiply $x^\alpha$ by $\chi\in C^\infty_c(M)$ such that $\chi=1$ on $\supp V$ and $\supp \chi$ is fully contained in said chart. Let $\omega= V^\flat\in C^{0,1}_c(T^*M)$; up to linearity we can write $\omega = f\de x$ where $f\in C^{0,1}_c(M)$ and $x\in C^\infty_c(M)$. Now consider $\{f_n\}_n, \{x_n\}_n$ sequences of $\Test(M)$ functions as stated in the previous lemma. Let us call $\omega_n:= f_n \de x_n$. By definition, $\omega_n$ is a test $1$-form. We want to prove that $\omega_n \to \omega$ in $W_H^{1,2}(T^*M)$. Using the calculus properties of $\de,\delta$ on test forms (see \cite[Prop. 6.4.3]{gigpas2020notes}, \cite[Prop. 6.4.17]{gigpas2020notes} together with \cite[Prop. 4.2.7]{gigpas2020notes}, and \cite[Eq. 3.5.13]{gigli2018nonsmooth} for the correspondence $\delta \omega_n = -\Div \omega_n^\sharp$), we obtain 
    \begin{align*}
        \omega - \omega_n &= {(f-f_n)} \,{\de x} + f_n \,\de (x-x_n)\\
        \de(\omega - \omega_n) &= \de{(f-f_n)} \wedge {\de x} + \de f_n \wedge \de (x-x_n)\\
        \delta(\omega - \omega_n) &= g({\de(f-f_n)} ,{\de x}) - {(f-f_n)} {\Delta x} + g({\de f_n} ,{\de (x- x_n)}) - f_n{\Delta (x-x_n)}.
    \end{align*}
    The properties of $f_n, x_n$ together with the fact that $f,\de x, \Delta x$ are essentially bounded, ensure that the three right hand sides converge to 0 in $L^2$ as $n\to \infty$.
\end{proof}

We will now denote by $\bm{\Ric}: H^{1,2}_H(TM)\times H^{1,2}_H(TM) \rightarrow \{\text{Radon measures on M}\}$ the measure-valued Ricci curvature tensor defined in \cite[Thm. 6.5.1]{gigpas2020notes}.
\begin{Theorem}\label{thm:RCDtodistributional}
    Let $(M,g)$ be a complete, connected Riemannian manifold with $g\in C^{0,1}_\loc(\cT^0_2 M)$, such that $(M,d,\Vol)$ is an $\mathsf{RCD}(K,\infty)$ space for some $K\in \R$. Then $\Ric \geq K$ in the distributional sense, i.e.
    \begin{align*}
        \Ric(X,X) \geq K |X|^2 \qquad \text{as distributions, for all } X\in C^\infty_c(TM).
    \end{align*}
\end{Theorem}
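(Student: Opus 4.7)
The plan is to use the Bochner--Weitzenböck identity as the bridge between Gigli's measure-valued Ricci tensor $\bm{\Ric}$ and the classical distributional Ricci tensor $\Ric$.

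First, for any $X \in C^\infty_c(TM)$, Proposition \ref{prop:H12vectors} gives $X \in H^{1,2}_H(TM)$, so $\bm{\Ric}(X,X)$ is a well-defined Radon measure on $M$. Since $(M,d,\Vol)$ is $\mathsf{RCD}(K,\infty)$, \cite[Thm.\ 6.5.1]{gigpas2020notes} (or the characterization of $\mathsf{RCD}(K,\infty)$ via the measure-valued Bochner inequality) yields the synthetic lower bound
\[
\bm{\Ric}(X,X) \geq K\,\abs{X}^2\,\Vol
\]
as Radon measures on $M$.

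The key step is to identify, for $X \in C^\infty_c(TM)$ and $\varphi \in C^\infty_c(M)$, the pairing $\langle \Ric(X,X), \varphi\,\Vol\rangle$ with the evaluation $\bm{\Ric}(X,X)(\varphi)$. Gigli's $\bm{\Ric}(X,X)$ is constructed so that the synthetic Bochner--Weitzenböck identity holds by definition, in the form
\[
\bm{\Ric}(X,X) = \tfrac{1}{2}\bm{\Delta}\abs{X}^2 - \abs{\nabla X}^2\,\Vol + g(\Delta_H X, X)\,\Vol,
\]
where $\bm{\Delta}$ is the measure-valued Laplacian. On the classical side, Theorem \ref{thm:BochWeitz} gives the same identity in $\D'(M)$:
\[
\Ric(X,X) = \Delta\tfrac{\abs{X}^2}{2} - \abs{\nabla X}^2 + g(\Delta_H X, X).
\]
I would pair the classical identity against $\varphi\,\Vol$ and integrate by parts in the Laplacian term. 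The compatibility results established earlier in this subsection (identification of $W^{1,2}$ with Cheeger's $W^{1,2}_*$, of $\de$, $\delta$ with their metric-measure counterparts, of $\Delta$ and $\Div$ via integration by parts against $C^\infty_c(M) \subseteq \Test(M)$ after approximation) ensure that each term on the right-hand side, once paired with $\varphi$, reproduces exactly the corresponding term in Gigli's expression for $\bm{\Ric}(X,X)(\varphi)$. This yields
\[
\langle \Ric(X,X), \varphi\,\Vol\rangle = \bm{\Ric}(X,X)(\varphi) \geq K\!\int \varphi\,\abs{X}^2\,\de\Vol
\]
for every non-negative $\varphi \in C^\infty_c(M)$, which is exactly the distributional bound $\Ric(X,X) \geq K\abs{X}^2$.

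The main obstacle is the identification step. The term $g(\Delta_H X, X)$ is the most delicate: classically $\Delta_H X$ is only a distribution of order one (lying in $W^{-1,1+1/n}_\loc$), so its pairing against $\varphi X$ requires the multiplication estimates of Remark \ref{rk:W-1pMultipl}, and matching it with Gigli's definition of $\Delta_H$ on $H^{1,2}_H(TM)$ relies on the density of $\TestV(M)$ and the closure of $\Delta_H$. A potentially annoying subtlety is that in the generality $g \in C^{0,1}_\loc$, $C^\infty_c(TM)$ need not be contained in $\TestV(M)$, only in $H^{1,2}_H(TM)$; one bypasses this by approximation, either via the smooth metrics $g_\eps$ of Remark \ref{Remark: Approximatingmetrics} (using Proposition \ref{prop:LpRicConvergence} to pass the classical Bochner identity to the limit) or via a direct regularization $X_\eps = X \star_M \rho_\eps$, together with the closedness of Gigli's differential operators. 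Once this bookkeeping is in place, the asserted distributional lower bound is immediate.
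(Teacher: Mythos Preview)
Your overall strategy matches the paper's---bridge $\Ric$ and $\bm{\Ric}$ via Bochner--Weitzenböck, then invoke the $\mathsf{RCD}$ lower bound---but the identification step has a real gap. You write $\bm{\Ric}(X,X) = \tfrac12\bm{\Delta}|X|^2 - |\nabla X|^2\Vol + g(\Delta_H X,X)\Vol$ for $X\in C^\infty_c(TM)$, but this is Gigli's \emph{definition} only on $\TestV(M)\subseteq D(\Delta_H)$, where $\Delta_H X\in L^2$ and the last term is an honest measure. For general $X\in H^{1,2}_H(TM)$ the measure $\bm{\Ric}(X,X)$ is obtained by continuous extension, not by that formula, and Proposition~\ref{prop:H12vectors} places $C^\infty_c(TM)$ only in $H^{1,2}_H$, not in $D(\Delta_H)$. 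On the classical side $\Delta_H X\in W^{-1,1+1/n}_\loc$ is merely distributional, so $g(\Delta_H X,X)\Vol$ is not a measure and cannot be matched with $\bm{\Ric}(X,X)$ termwise against a weight $\varphi$; integrating by parts produces cross terms in $\de\varphi$ whose fate under Gigli's extension is exactly the issue. Your proposed fixes do not resolve this: regularizing $X$ is pointless since $X$ is already smooth, and regularizing $g$ to $g_\eps$ changes the $\mathsf{RCD}$ structure you want to exploit.

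The paper sidesteps the problem by comparing only \emph{total masses}. For compactly supported $X$ one has $\langle\Delta|X|^2,\Vol\rangle=0$, and by the adjointness defining $\delta$, $\langle g(\Delta_H X,X),\Vol\rangle = \int(|\de X^\flat|^2+|\delta X^\flat|^2)\,\de\Vol$; the latter equals $\bm{\Ric}(X,X)(M)+\int|\nabla X|^2\,\de\Vol$ for every $X\in H^{1,2}_H(TM)$ by \cite[Thm.~6.5.1]{gigpas2020notes}, so Theorem~\ref{thm:BochWeitz} gives $\langle\Ric(X,X),\Vol\rangle=\bm{\Ric}(X,X)(M)$ directly. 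Localization is then done by a tensoriality trick rather than by matching weighted Bochner terms: any nonnegative $\phi\in C^\infty_c(M)$ admits a finite decomposition $\phi=\sum_i\psi_i^2$ with $\psi_i\in C^{1,1}_c(M)$ (\cite[Lem.~4]{guan97mongeampere}), and since $\Ric$ is a tensor (and the multiplication is justified via Remark~\ref{rk:W-1pMultipl}), $\langle\Ric(X,X),\phi\rangle=\sum_i\langle\Ric(\psi_i X,\psi_i X),\Vol\rangle$. Each $\psi_i X$ lies in $W^{1,\infty^-}_\loc(TM)\cap H^{1,2}_H(TM)$ (the latter by \cite[Lem.~3.3]{bregig2024calculus}), so the total-mass identity applies to each summand, and the synthetic bound $\bm{\Ric}(\psi_iX,\psi_iX)(M)\ge K\int|\psi_iX|^2\,\de\Vol$ sums to the claim.
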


\begin{proof}
    Take an arbitrary $X\in C^\infty_c(TM)\subseteq H^{1,2}_H(TM)$.  Due to their compact support and their $W^{-1,\infty^-}_\loc$ regularity, Remark \ref{rk:W-1pMultipl} ensures that we are allowed to evaluate all the following distributions on the non-compactly supported canonical volume density $\Vol\in C^{0,1}_{\mathrm{loc}}(\Vol\, M) =W^{1,\infty}_\loc(\Vol\, M)\subseteq W^{1,\infty^-}_\loc(\Vol\, M)$. First of all
    \begin{align*}
        \langle \Delta \abs{X}^2, \Vol\rangle = -\int_M g \left(\nabla \abs{X}^2, \nabla 1 \right) \,\de\Vol=0.
    \end{align*}
    Then by \cite[Thm. 6.5.1]{gigpas2020notes} and Theorem \ref{thm:BochWeitz} the following holds:
    \begin{align*}
        \bm{{\Ric}}(X,X)(M) &=\int_M \vert{\delta X^\flat}\vert^2 + \vert{\de X^\flat}\vert^2 - \abs{\nabla X}^2\,\de\Vol\\
        &=\left\langle \frac{1}{2} {\Delta}\abs{X}^2 +  g(\Delta_H X, X )- \abs{\nabla X}^2 ,\Vol\right\rangle \\
        &=\left\langle\Ric(X,X),\Vol\right\rangle.
    \end{align*}
    We also know from \cite[Thm. 6.5.1]{gigpas2020notes} that, for all $Y\in H^{1,2}_H(TM)$,
    \begin{align*}
        \bm{{\Ric}}(Y,Y) (M)\geq K \int_M \abs{Y}^2 \,\de\Vol.
    \end{align*}
    Given a non negative $\phi \in C^\infty_c(M)$, we can always represent it as a finite sum of squares of $C^{1,1}_c(M)\subseteq W^{1,\infty^-}_\loc(M)$ functions, $\phi = \sum_{i=1}^m \psi_i^2$. 
    A proof of this fact can be found in \cite[Lem. 4]{guan97mongeampere}. Then the following multiplication is justified by Remark \ref{rk:W-1pMultipl}: 
    \begin{align*}
        \left\langle\Ric(X,X),\phi\right\rangle =  \left\langle (\sum_i \psi^2_i)\Ric(X,X),\Vol\right\rangle =  \sum_i \left\langle\Ric (\psi_i X, \psi_i X),\Vol\right\rangle.
    \end{align*}
    Due to \cite[Lem. 3.3]{bregig2024calculus},  we also know that $\psi_i X \in H^{1,2}_H(TM)$. We can thus conclude
    \begin{align*}
        \left\langle\Ric(X,X),\phi\right\rangle &= \sum_i \left\langle\Ric (\psi_i X, \psi_i X),\Vol\right\rangle
        = \sum_i \bm{{\Ric}}(\psi_i X, \psi_i X) (M)\\
        &\geq K \sum_i \int_M \abs{\psi_i X}^2 \,\de\Vol
        = K \int_M \phi \abs{X}^2 \,\de\Vol.
    \end{align*}
\end{proof}

\subsection[Distributional curvature bound implies synthetic curvature bound]{Distributional curvature bound implies synthetic curvature bound - an approximation based proof}
\label{Subsection: Distributional -> synthetic}
In this section we will give an alternative proof of the fact that a lower bound on the distributional Ricci curvature tensor $\Ric \geq K$ of a $C^{0,1}_\loc$ Riemannian manifold $(M,g)$ implies that the metric measure space $(M, d, \Vol)$ is an $\mathsf{RCD}(K,\dim M)$ space. This has been established for metrics of even lower regularity (namely $g \in C^0 \cap W^{1,2}_\loc$) in \cite{mondino2024equivalence}. Our alternative proof, based on the stability of the variable $\mathsf{CD}$-condition established in \cite{ketterer2017variableCD}, is both shorter and has the advantage of providing a slight improvement in the sense that the volume growth condition \cite[Eq.\ (4.1)]{mondino2024equivalence} is not required for $g$ in $C^{0,1}_\loc$. We explicitly construct a pointed measured Gromov-Hausdorff approximating sequence of smooth Riemannian manifolds for $(M,g)$, which constitutes a (potentially) stronger result\footnote{A famous conjecture in the theory of $\mathsf{RCD}$-spaces states that every non-collapsed $\mathsf{RCD}$-space is obtained as a pmGH-limit of smooth Riemannian manifolds with lower Ricci bounds.}.

As we saw in Section \ref{Subsection: C1distribcurvat}, a $C^{0,1}_\loc$ complete connected Riemannian manifold $(M,g)$ can be approximated by a sequence of smooth complete connected Riemannian manifolds $(M,g_i)$ with $g_i\to g$ in $C^0_\loc$. It follows then directly that the pointed metric measure spaces $(M,d_i,\lambda_i^{-1}\Vol_i,p)$ converge in the pointed measured Gromov-Hausdorff sense to $(M,d,\lambda^{-1}\Vol,p)$, where $\lambda_i := \Vol_i(B_1^i(p))$ and $\lambda:=\Vol(B_1(p))$, for any $p\in M$. Let us briefly recall the meaning of these notions, following the definitions given in \cite{gimosa2015pmGHrcd}.
\begin{Definition}
    We call \emph{metric measure space} any triple $(X,d,\meas)$ such that
    \begin{enumerate}
        \item $(X,d)$ is a complete, separable metric space,
        \item $\meas$ is a Radon measure on $X$.
    \end{enumerate}
    We call \emph{pointed metric measure space} any quadruple $(X,d,\meas, x)$ such that $(X,d,\meas)$ is a metric measure space and $x\in \supp \meas$. We say $\Psi$ is an \emph{isomorphism} between the pointed metric measure spaces $(X,d,\meas,x),\, (Y,d',\meas',y)$ whenever $\Psi:\supp \meas\to Y$ is an isometry such that $\Psi_{\#}\meas=\meas'$ and $\Psi(x)=y$.
\end{Definition}
\begin{Definition}
    We say a sequence of pointed metric measure spaces $\{(X_n,d_n,\meas_n,x_n)\}_{n\in\N}$ converges in the \emph{pointed measured Gromov-Hausdorff} (from now on \emph{pmGH}) sense to a metric measure space $(X,d,\meas,x)$ whenever for any $R,\eps >0$ there exists $N(\eps,R)\in \N$ such that for all $n\geq N(\eps,R)$ there exists a Borel map $f^{R,\eps}_n: B_R(x_n)\to X$ such that
    \begin{enumerate}
        \item $f^{R,\eps}_n(x_n) = x$,
        \item $\sup_{y,z\in B_R(x_n)} \abs{d_n (y,z) - d( f^{R,\eps}_n(y), f^{R,\eps}_n(z))} < \eps$,
        \item the $\eps$-neighborhood of $f^{R,\eps}_n(B_R(x_n))$ contains $B_{R-\eps}(x)$,
        \item $(f^{R,\eps}_n)_* (\meas_n\vert_{B_R(x_n)})$ weakly converges to $\meas\vert_{B_R(x)}$, i.e.
        \begin{gather*}
            \int_{B_R(x_n)} \varphi\circ f^{R,\eps}_n \;\de \meas_n \longrightarrow \int_{B_R(x)}\varphi \;\de \meas \\
            \text{for all bounded }\varphi\in C(X) \text{ with bounded support,}
        \end{gather*}
        for a.e. $R>0$.
    \end{enumerate}
\end{Definition}
There exists a closely related notion of convergence of isomorphism classes of pointed metric measure spaces, called \emph{pointed measured Gromov} (in short \emph{pmG}) convergence. Very briefly, it describes the convergence of pointed metric measure spaces $(X_n,d_n,\meas_n,x_n)$ when seen as isometrically embedded subspaces of an encompassing metric space $(Y,\bm{d})$. For a precise definition we point towards \cite{gimosa2015pmGHrcd}. It suffices to know that, when it exists, such a limit is unique. Also we will use the following property (\cite{gimosa2015pmGHrcd}, Proposition 3.30).
\begin{Theorem}\label{thm:GMSpmG}
    If pointed metric measure spaces $(X_n,d_n,\meas_n,x_n)$ converge to $(X,d,\meas,x)$ in the pmGH sense, then their isomorphism classes converge $[X_n,d_n,\meas_n,x_n]\to [X,d,\meas,x]$ in the pmG sense.
\end{Theorem}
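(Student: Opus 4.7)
The strategy is to exhibit, out of the pmGH data, a common metric space $(Y,\bm{d})$ into which $\supp\meas$ and the $\supp\meas_n$ all embed isometrically, with base points converging to $x$ and measures converging weakly on bounded sets — this is essentially the definition of pmG convergence (up to the formulation via isomorphism classes and an encompassing space used in \cite{gimosa2015pmGHrcd}).

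The first step is a diagonal selection: by the pmGH hypothesis, for each pair $(R,\eps)$ there is a Borel $\eps$-isometry $f_n^{R,\eps}\colon B_R(x_n)\to X$ satisfying (i)--(iv) in the definition of pmGH convergence. Choosing $R_n\nearrow \infty$ and $\eps_n\searrow 0$ sufficiently slowly, I set $f_n := f_n^{R_n,\eps_n}$, so that the four properties all hold with parameters $(R_n,\eps_n)$ along the same sequence and the weak convergence in (iv) applies on balls of radii growing to infinity.

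The second step is the construction of $(Y,\bm{d})$. On the disjoint union $\widetilde Y := \supp \meas \sqcup \bigsqcup_n (\supp \meas_n \cap B_{R_n}(x_n))$, retain $d$ on $\supp \meas$ and $d_n$ on each piece of $X_n$, and for $p\in X_n$ and $q\in \supp\meas$ define
\begin{equation*}
    \bm{d}(p,q) := \inf_{p'\in B_{R_n}(x_n)} \bigl( d_n(p,p') + d(f_n(p'),q) \bigr) + \eps_n ,
\end{equation*}
extending between different $X_n$'s by the triangle inequality (infimum over paths through $\supp\meas$). Using that $f_n$ is an $\eps_n$-isometry and that the $\eps_n$-penalty appears only between different factors, one checks that $\bm{d}$ is a pseudo-metric whose restriction to each $X_n$ (resp.\ $X$) is precisely $d_n$ (resp.\ $d$); quotienting by the zero-distance relation yields the desired metric space $(Y,\bm{d})$. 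Property (i) of pmGH gives $\bm{d}(x_n,x)\le \eps_n \to 0$, so the base points converge.

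The third step is weak convergence of the pushed-forward measures on $Y$. For any bounded continuous $\varphi\colon Y\to \R$ with support in some $\bm{d}$-ball of radius $R$, the $\eps_n$-closeness in $\bm{d}$ of any $p\in \supp\meas_n$ to $f_n(p)\in X$ shows that $\int \varphi\,\de\meas_n$ differs from $\int \varphi\,\de(f_n)_*\meas_n$ by a quantity controlled by the modulus of continuity of $\varphi$ times $\eps_n$; the pmGH property (iv) then produces convergence to $\int \varphi\,\de\meas$ for a.e.\ $R$, which (by a monotone class / truncation argument) upgrades to weak convergence of the embedded measures on $Y$ in the sense required by pmG convergence. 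The main subtlety I anticipate is not the soft convergence arguments but the bookkeeping in the construction of $\bm{d}$: one must verify that placing the $\eps_n$-penalty only on inter-factor pairs keeps the triangle inequality valid and preserves the exact isometric embeddings on each individual $X_n$; this is handled by the classical gluing trick of Burago--Burago--Ivanov, so up to a citation the proof is essentially that.
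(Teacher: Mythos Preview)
The paper does not prove this statement at all: it is quoted verbatim as Proposition~3.30 of \cite{gimosa2015pmGHrcd}, with no argument supplied beyond the citation. So there is no ``paper's own proof'' to compare against; what you have written is an independent sketch of the argument that underlies the cited result.

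Your outline is the standard one (and is in essence the approach of \cite{gimosa2015pmGHrcd}): diagonalize the pmGH data, glue the spaces along the approximate isometries into a common ambient space, and verify weak convergence of the pushed-forward measures. One point worth tightening: in your construction of $(Y,\bm d)$ you only embed $\supp\meas_n\cap B_{R_n}(x_n)$, whereas pmG convergence (in the formulation of \cite{gimosa2015pmGHrcd}) asks for isometric embeddings of the full supports $\supp\meas_n$. This is easily repaired---once the balls are glued to $\supp\meas$, you can attach the remainder of each $\supp\meas_n$ isometrically to its own ball without disturbing the rest of the construction---but as written it is a gap. The weak-convergence step is fine once this is fixed, since test functions in $C_{bs}(Y)$ only see finitely many of the $R_n$-balls and the mass of $\meas_n$ outside $B_{R_n}(x_n)$ is irrelevant for such test functions once $n$ is large.
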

We shall require the notion of variable curvature-dimension condition $\mathsf{CD}(\kappa,N)$ introduced by Ketterer in \cite{ketterer2017variableCD}. We will not discuss the definition of $\mathsf{CD}(\kappa,N)$ per se, as it is sufficient for us to know the following facts.
\begin{Theorem}
    Let $(M,g)$ be a complete smooth Riemannian manifold. Let $\kappa:M\to \R$ be a continuous function. Then the metric measure space $(M,d,\lambda\Vol)$, for any $\lambda>0$, satisfies the condition $\mathsf{CD}(\kappa,\dim M)$ if and only if it has Ricci curvature bounded from below by $\kappa$.    
\end{Theorem}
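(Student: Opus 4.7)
The plan is to establish the equivalence in the classical way, adapted to the variable-curvature setting; this is essentially Ketterer's theorem from the cited paper and the argument parallels the Sturm/Lott--Villani equivalence for constant $K$, with the distortion coefficients $\tau^{(t)}_{K,N}$ replaced by variable analogues $\tau^{(t)}_{\kappa,N}$ defined in terms of an integral of $\kappa\circ\gamma$ along geodesics.

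For the implication $\mathrm{Ric}\ge \kappa \Longrightarrow \mathsf{CD}(\kappa,\dim M)$, I would fix two probability measures $\mu_0,\mu_1\in P_2(M)$ with bounded support and densities with respect to $\lambda \Vol$, and apply the Brenier--McCann theorem to obtain a unique $L^2$-optimal transport plan $\pi$ concentrated on a measurable selection $(x,y)\mapsto \gamma_{x,y}$ of minimizing geodesics. Along each such geodesic the volume distortion Jacobian $J_t(x,y)$ of the map $x\mapsto \gamma_{x,y}(t)$ satisfies a Riccati-type ODE; the pointwise bound $\mathrm{Ric}_{\gamma_{x,y}(s)}(\dot\gamma_{x,y}(s),\dot\gamma_{x,y}(s))\ge \kappa(\gamma_{x,y}(s))|\dot\gamma_{x,y}(s)|^2$ implies, by a straightforward comparison argument with the variable model, the pointwise inequality
\begin{equation*}
J_t(x,y)^{1/N}\;\ge\;\tau_{\kappa,N}^{(1-t)}(\gamma_{x,y})\,J_0(x,y)^{1/N}+\tau_{\kappa,N}^{(t)}(\gamma_{x,y})\,J_1(x,y)^{1/N},
\end{equation*}
where $N=\dim M$. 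Pushing this pointwise bound through the Monge--Ampère-type change of variables for the Wasserstein interpolation $\mu_t$ yields exactly the defining displacement convexity inequality for $\mathsf{CD}(\kappa,N)$; multiplying the reference measure by $\lambda$ only rescales densities by a constant and leaves the inequality invariant.

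For the converse $\mathsf{CD}(\kappa,\dim M)\Longrightarrow \mathrm{Ric}\ge \kappa$, the argument is by localization. Fix $p\in M$ and a unit vector $v\in T_pM$; I would construct small test measures $\mu_0^\varepsilon,\mu_1^\varepsilon$ supported in tiny balls centered at $\exp_p(-\tfrac{\varepsilon}{2}v)$ and $\exp_p(\tfrac{\varepsilon}{2}v)$ (with uniform densities, say), compute the unique Wasserstein geodesic between them for $\varepsilon$ small (which is essentially the obvious translation along the geodesic through $p$ with direction $v$), and insert this into the variable $\mathsf{CD}$ inequality. Performing a Taylor expansion of both sides to second order in $\varepsilon$, the zeroth and first order terms cancel and the coefficient of $\varepsilon^2$ reduces, after using the Jacobi equation for the exponential map at $p$ and the continuity of $\kappa$, to $\mathrm{Ric}_p(v,v)\ge \kappa(p)|v|^2$; the arbitrariness of $p$ and $v$ gives the conclusion.

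The main technical obstacle is bookkeeping the variable distortion coefficients: unlike the constant-curvature case, $\tau^{(t)}_{\kappa,N}$ depends on the whole geodesic through $\int \kappa\circ\gamma$, so the Riccati comparison must be done along each individual geodesic rather than via a single model space, and one must check that the resulting pointwise-in-$(x,y)$ inequality is measurable in $(x,y)$ and integrates correctly when passing from Jacobians to Wasserstein densities. Since all of this is carried out in detail in Ketterer's paper on the variable $\mathsf{CD}$ condition, for our purposes this result is invoked as a black box; we only need its statement to apply the stability of $\mathsf{CD}(\kappa,N)$ under pmG-limits to our approximating sequence $(M,d_i,\lambda_i^{-1}\Vol_i,p)$ of smooth Riemannian manifolds.
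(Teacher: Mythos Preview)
Your proposal is correct and aligns with the paper's treatment: the paper does not prove this statement at all but simply cites it as (a special case of) Theorem~2.14 from Ketterer's work, invoking it as a black box exactly as you conclude in your final paragraph. Your sketch of the underlying argument (Brenier--McCann optimal maps, Riccati comparison along geodesics for the forward direction, and localization via small balls plus Taylor expansion for the converse) is an accurate outline of how Ketterer's proof proceeds, so there is no discrepancy.
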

\begin{Theorem}\label{thm:kettererRiemannCD}
    Let $\{(M_i,g_i,o_i)\}_{i\in\N}$ be a sequence of $n$-dimensional pointed smooth Riemannian manifolds that satisfy the condition $\mathsf{CD}(\kappa_i,n)$ for $\kappa_i\in C^0(M_i)$. Let $\meas_i := \Vol_i(B^i_1(o_i))^{-1} \Vol_i$. Assume that
    \begin{equation}\label{eq:CDint}
        {R^{2p}} \int_{B_R^i(o_i)} \left[ (\kappa_i - K)^- \right]^p \,\de\meas_i\xrightarrow{i\to\infty}0 \qquad \forall R>0
   \end{equation}
    for some $K\in \R$ and $p>\frac{n}{2}$.\\
    Then a subsequence of the family of isomorphism classes of the pointed metric measure spaces $\{[M_i, d_i, \meas_i,o_i]\}_i$ converges in the pmG sense to the isomorphism class of a $\mathsf{CD}(K,n)$ space.
\end{Theorem}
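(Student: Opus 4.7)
The plan is to first extract a pmGH-convergent subsequence via a compactness argument, and then verify that the limit isomorphism class satisfies the constant-curvature condition $\mathsf{CD}(K,n)$ by passing to the limit in the variable entropy inequality, exploiting the integral hypothesis \eqref{eq:CDint} to handle the negative part of $\kappa_i - K$.

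First I would secure compactness. A key feature of the variable $\mathsf{CD}(\kappa_i,n)$-condition is that it still implies a Bishop--Gromov-type volume comparison with a distortion factor computed from $\kappa_i$ along radial geodesics. Applied with the normalization $\meas_i(B_1^i(o_i))=1$, and combined with the uniform integrability coming from \eqref{eq:CDint}, this provides a uniform doubling-type estimate on balls of fixed radius around $o_i$ that is independent of $i$. Gromov's compactness theorem then yields a subsequence (still denoted by $i$) such that $(M_i,d_i,\meas_i,o_i)$ converges in the pmGH-sense to some pointed metric measure space $(X,d,\meas,x)$, and by Theorem \ref{thm:GMSpmG} the isomorphism classes converge in the pmG-sense as well.

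Second, I would show the limit satisfies $\mathsf{CD}(K,n)$. Fix probability measures $\mu_0,\mu_1$ on $X$ with bounded support and bounded densities; approximate them by measures $\mu_0^i,\mu_1^i$ on $M_i$ through the pmGH-approximation maps. The $\mathsf{CD}(\kappa_i,n)$ condition furnishes $W_2$-geodesics $(\mu_t^i)$ along which an entropy inequality holds with distortion coefficients $\tau^{(t)}_{\kappa_i,n}(\theta)$ defined via integration of $\kappa_i$ along the transport geodesics in an optimal plan. Standard stability of optimal transport under pmG-convergence allows one to extract a limit $W_2$-geodesic $(\mu_t)$ between $\mu_0$ and $\mu_1$. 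The exponent $p>n/2$ in \eqref{eq:CDint} is precisely what is needed, via H\"older's inequality along transport geodesics combined with the $n$-dimensional volume comparison for $\mathsf{CD}(\kappa_i,n)$, to show that the contribution of $(\kappa_i - K)^-$ to the distortion coefficients vanishes in the limit. Hence $\tau^{(t)}_{\kappa_i,n}$ may be replaced by $\tau^{(t)}_{K,n}$ in the limiting inequality, producing the $\mathsf{CD}(K,n)$ condition on $(X,d,\meas,x)$.

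The main obstacle is making the third step rigorous. The hypothesis \eqref{eq:CDint} is an integral bound of $(\kappa_i - K)^-$ against the ambient measure $\meas_i$, whereas the entropy inequality requires control of $\kappa_i$ integrated \emph{along} geodesics carried by an optimal plan. Transferring an $L^p$-bound on the ambient space to an $L^p$-bound along plans is the technical heart of the argument; it typically proceeds via a superposition / Lisini-type representation of $W_2$-geodesics as probability measures on the space of curves, combined with the variable Bishop--Gromov inequality applied radially so that pointwise upper bounds on transport densities translate into usable estimates on the curve measure. The sharp exponent $n/2$ is natural at this point since it matches the Kato-type threshold appearing in $n$-dimensional volume comparison, and any independent proof would have to reproduce this quantitative step.
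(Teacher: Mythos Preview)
The paper does not prove this theorem at all: immediately after stating it, the authors write that it is (a special case of) Theorem~7.1 from \cite{ketterer2021integralCD}, and they use it as a black box in the proof of Theorem~\ref{thm:distribtosynth}. So there is no ``paper's own proof'' to compare your proposal against.

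That said, your outline is a reasonable sketch of the strategy actually used in Ketterer's work: precompactness via a variable Bishop--Gromov inequality, extraction of a pmGH/pmG limit, and then stability of the entropy inequality with the $(\kappa_i-K)^-$ contribution absorbed using the integral hypothesis. You correctly identify the genuine difficulty, namely converting the ambient $L^p$-bound \eqref{eq:CDint} into control of $\kappa_i$ along transport geodesics, and you correctly locate the role of the threshold $p>n/2$. What you have written is a proof \emph{plan}, not a proof: the compactness step requires a quantitative variable Bishop--Gromov estimate that is uniform in $i$ (this is itself nontrivial and is one of the contributions of \cite{ketterer2017variableCD,ketterer2021integralCD}), and the ``transfer'' step you flag is exactly where the hard analysis lies. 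If your intent was only to indicate how one would approach the result, this is fine; if you intended a self-contained argument, the gaps you yourself name would need to be filled, and at that point you would essentially be reproducing Ketterer's paper.
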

These theorems are, respectively, (a special case of) Theorem 2.14 and Theorem 7.1 from \cite{ketterer2021integralCD}.
\begin{Theorem}\label{thm:distribtosynth}
    Let $(M,g)$ be a complete, connected smooth manifold endowed with a $C^{0,1}_\loc$ Riemannian metric $g$. Assume there exists $K\in \R$ such that $\Ric\geq K$ in the distributional sense. Then the metric measure space $(M, d, \Vol)$ is an $\mathsf{RCD}(K, \dim M)$ space.
\end{Theorem}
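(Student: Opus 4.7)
The plan is to combine the convolution approximation machinery from Section~\ref{Subsection: C1distribcurvat} with Ketterer's stability theorem (Theorem~\ref{thm:kettererRiemannCD}) for the variable curvature--dimension condition. Fix a basepoint $o\in M$ and, using Remark~\ref{Remark: Approximatingmetrics}, consider the smooth approximating metrics $g_i$ with $g_i\to g$ locally uniformly. Set $n:=\dim M$, $\meas_i:=\Vol_i(B^i_1(o))^{-1}\Vol_i$ and $\meas:=\Vol(B_1(o))^{-1}\Vol$. First I would verify that the pointed metric measure spaces $(M,d_i,\meas_i,o)$ converge to $(M,d,\meas,o)$ in the pmGH sense: the distance comparison \eqref{eq:distances_g_geps} together with the locally uniform convergence $g_i\to g$ upgrades to $d_i\to d$ locally uniformly, and dominated convergence together with $\sqrt{\det g_i}\to\sqrt{\det g}$ locally uniformly gives weak convergence of the restricted volume measures. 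Continuity of $R\mapsto \Vol(B_R(o))$ for a.e.\ $R>0$ (which is used in the Borel maps $f_n^{R,\eps}$) is standard.

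Next I would associate with each smooth $g_i$ a continuous lower Ricci bound. Since $g_i$ is smooth, the pointwise smallest eigenvalue of $\Ric_i$ with respect to $g_i$ is a continuous function $\kappa_i\colon M\to\R$; by the smooth $\mathsf{CD}$--characterization recalled above, $(M,d_i,\meas_i)$ satisfies $\mathsf{CD}(\kappa_i,n)$. To apply Theorem~\ref{thm:kettererRiemannCD} I need to verify that for some $p>n/2$ and every $R>0$,
\begin{equation*}
R^{2p}\int_{B^i_R(o)}\bigl[(\kappa_i-K)^-\bigr]^p\,\de\meas_i\xrightarrow{i\to\infty}0.
\end{equation*}
Using \eqref{eq:balls_g_geps} I may integrate instead over a fixed compact set $K_R\Subset M$ containing $B^i_R(o)$ for all large $i$. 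The distributional bound $\Ric\ge K g$ (which in particular implies $\Ric\star_M\rho_i\ge K\, g\star_M\rho_i$ by Proposition~\ref{prop:standard_properties_of_starM}(i)) combined with Proposition~\ref{prop:LpRicConvergence}(i) (giving $\Ric_i-\Ric\star_M\rho_i\to 0$ in $L^p_\loc$) yields that the negative part of $\Ric_i-Kg_i$, and hence $(\kappa_i-K)^-$, tends to zero in $L^p_\loc$ for every $p\in[1,\infty)$. Since $\meas_i$ has uniformly bounded density relative to $\meas$ on compacta (by locally uniform convergence of $\sqrt{\det g_i}$), the integral above indeed tends to zero.

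Having verified the hypotheses, Theorem~\ref{thm:kettererRiemannCD} provides a subsequence along which $[M,d_i,\meas_i,o]$ converges in the pmG sense to the isomorphism class of a $\mathsf{CD}(K,n)$ space. On the other hand, by Theorem~\ref{thm:GMSpmG} the pmGH convergence established in the first step also implies pmG convergence to $[M,d,\meas,o]$. Uniqueness of the pmG limit then forces $(M,d,\meas)$ (equivalently, $(M,d,\Vol)$) to be a $\mathsf{CD}(K,n)$ space. Finally, to upgrade to $\mathsf{RCD}(K,n)$ I would invoke infinitesimal Hilbertianity: by the discussion of Sobolev spaces in Subsection~\ref{Subsection: CompatibilitybetweennotionsofSobolev}, the Cheeger energy on $(M,d,\Vol)$ coincides with the quadratic form $f\mapsto\int|\de f|^2\,\de\Vol$ computed via the Riemannian metric $g$, which is manifestly a quadratic form on $W^{1,2}(M)$.

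The main obstacle I anticipate is the verification of the integral smallness condition \eqref{eq:CDint}, specifically the passage from the distributional inequality $\Ric\ge K g$ to a lower bound on the pointwise eigenvalues $\kappa_i$ of $\Ric_i$ in an $L^p_\loc$ sense; this hinges crucially on Proposition~\ref{prop:LpRicConvergence}(i), which requires the Lipschitz regularity of $g$, and explains why the argument works at regularity $C^{0,1}_\loc$ without an additional volume growth hypothesis.
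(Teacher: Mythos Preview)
Your proposal is correct and follows essentially the same route as the paper's own proof: approximate by the smooth metrics $g_i$, verify Ketterer's integral hypothesis~\eqref{eq:CDint} via Proposition~\ref{prop:LpRicConvergence} together with positivity-preservation under $\star_M$, identify the pmG limit with $(M,d,\meas)$ through the pmGH convergence coming from $g_i\to g$ in $C^0_\loc$, and upgrade $\mathsf{CD}$ to $\mathsf{RCD}$ by infinitesimal Hilbertianity. The only difference is cosmetic: the paper decomposes $\Ric_i(X,X)-Kg_i(X,X)$ into four terms by convolving the \emph{scalar} $\Ric(X,X)-Kg(X,X)$ (thereby picking up commutator terms $B_i,D_i$ handled via Proposition~\ref{prop:LpRicConvergence}(ii)), whereas your direct application of Proposition~\ref{prop:standard_properties_of_starM}(i) to the \emph{tensor} $\Ric-Kg$ gives a slightly shorter two-term decomposition.
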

\begin{proof}
    Let $n=\dim M$. Consider as before $g_i= g_{\varepsilon_i}$ as in Remark \ref{Remark: Approximatingmetrics} for some $\eps_i\searrow 0$. It follows from Theorem \ref{thm:C1hopfrinow}, together with \eqref{eq:balls_g_geps} that  $(M,g_i)$ is a complete Riemannian manifold for every $i$. Observe that for all $i\in \N$ one can define the continuous function $\kappa_i:M\to \R $ assigning to every $p\in M$ the lowest eigenvalue of $\Ric_i (p)$ with respect to $g_i(p)$. Clearly, it holds that $\Ric_i \geq \kappa_i g_i$. 
    
    Fix any $x_0\in M$. In order to apply Theorem \ref{thm:kettererRiemannCD} we need to verify that, for some $p>\frac{n}{2}$,
    \begin{align}\label{eq:ketterer_to_verify}
        \frac{R^{2p}}{\Vol_i (B_1^i(x_0) )} \int_{B_R^i(x_0)} \left[ (\kappa_i - K)^- \right]^p \,\de\Vol_i\xrightarrow{i\to\infty}0 \qquad \forall R>0.
    \end{align}
To see this, note first that, by \eqref{eq:balls_g_geps},
for any fixed $R>0$ and all $i\in \N$ we have $B_R^i(x_0) \subseteq \overline{B_{2R}(x_0))} \Subset M$, and since $g_i\to g$ locally uniformly, for large $i$ we additionally have
$\Vol_i (B_1^i(x_0) )$ $\ge$ $\Vol_i(B_{1/2}(x_0)) \ge \frac{1}{2}\Vol(B_{1/2}(x_0))$. Thus to establish
\eqref{eq:ketterer_to_verify}, it remains to show that $(\kappa_i - K)^- \to 0$ in $L^p_{\mathrm{loc}}(M)$. Fix some $L\Subset M$, and without loss of generality suppose that $L$ is small enough to admit a smooth local frame $X_1,\dots,X_n$ in a neighborhood $V$ of $L$. Also, fix $\bar x\in L$. Then according to the Rayleigh quotient formula, for any $i$ there exists some $g_i$-unit vector $\bar X^{(i)}\in T_{\bar x}M$ with $(\kappa_i(\bar x)-K) = \Ric_{g_i}(\bar X^{(i)},\bar X^{(i)})-K$. 
Let $\bar X^{(i)}=\sum_{k=1}^n \alpha_k^{(i)} X_k(\bar x)$ ($\alpha_1^{(i)},\dots,\alpha_n^{(i)}\in \R$) and set
\begin{equation}\label{eq:Xidef}
    X^{(i)} := \sum_{k=1}^n \alpha_k^{(i)} X_k
\end{equation}
on $V$. Since $\bar X^{(i)}$ is a $g_i$-unit
vector it follows from \eqref{geps3} and the local equivalence of $\|\,\|_g$ with the Euclidean norm w.r.t.\ the frame $X_k$ that
there exists some
$C_L>0$ such that
\begin{equation}\label{eq:alpha_bounds}
|\alpha_k^{(i)}| \le C_L \qquad \forall i\in \N,\ k=1,\dots,n,
\end{equation}
independently of the choice of $\bar x\in L$ and $\bar X^{(i)}$ as above.
On $L$ we have
\begin{equation}\label{eq:Ric_i_decomposition}
\begin{split}
\Ric_i(X^{(i)},X^{(i)}) -& Kg_i(X^{(i)},X^{(i)}) = 
[\Ric_i - (\Ric\star_M \rho_i)](X^{(i)},X^{(i)}) \\
&+[(\Ric\star_M \rho_i)(X^{(i)},X^{(i)}) - \Ric(X^{(i)},X^{(i)})\star_M\rho_i]\\
&+[\Ric(X^{(i)},X^{(i)}) - Kg(X^{(i)},X^{(i)})]\star_M\rho_i \\
&+ [Kg(X^{(i)},X^{(i)})\star_M\rho_i - Kg_i(X^{(i)},X^{(i)})] \\
&=: A_i(X^{(i)},X^{(i)}) + B_i(X^{(i)},X^{(i)}) + C_i(X^{(i)},X^{(i)}) + D_i(X^{(i)},X^{(i)}).
\end{split}
\end{equation}
Since $\Ric(X,X)\geq K g(X,X)$ in distributions and $\rho\ge 0$, $C(X^{(i)},X^{(i)}) \ge 0$. Therefore,
\[
[\Ric_i(X^{(i)},X^{(i)}) - Kg_i(X^{(i)},X^{(i)})]^- \le |A_i(X^{(i)},X^{(i)})| + |B_i(X^{(i)},X^{(i)})| + |D_i(X^{(i)},X^{(i)})| 
\]
pointwise on $L$. Set $\hat A_i := \max\{|A_i(X_k,X_l)| \colon k,l = 1,\dots,n \}$, and analogously define $\hat B_i$ and
$\hat D_i$.
Inserting \eqref{eq:Xidef} into \eqref{eq:Ric_i_decomposition} and taking into account \eqref{eq:alpha_bounds}, it follows that there exists a constant $\tilde C_L$ such that
\[
[\Ric_i(X^{(i)},X^{(i)}) - Kg_i(X^{(i)},X^{(i)})]^- \le
\tilde C_L(\hat A_i + \hat B_i + \hat D_i),
\]
pointwise on $L$. 
Since all of the above arguments apply irrespective of the choice of $\bar x$ and $\bar X^{(i)}$ we conclude that $[\kappa_i-K]^-\le
\tilde C_L(\hat A_i + \hat B_i + \hat D_i)$, pointwise on $L$.
Proposition
\ref{prop:LpRicConvergence} shows that both $\hat A_i$ and $\hat B_i$ converge to $0$ in $L^p(L)$. Moreover, $\hat D_i\to 0$ uniformly on $L$. Altogether, this establishes \eqref{eq:ketterer_to_verify}.
 
From this, by applying Theorem \ref{thm:kettererRiemannCD}, we obtain that $\{[M, d_i, \meas_i, x_0]\}_{i\in\N}$ admits a subsequence that converges in the pmG-sense to the isomorphism class of a $\mathsf{CD}(K, \dim M)$ space. At the same time, due to the convergence $g_i\to g$ in $C^0_\loc$, we can easily see that $(M, d_i , \meas_i, x_0) \to (M,d,\meas,x_0)$, where $\meas_{(i)}:= \Vol_{(i)}(B_1(x_0))^{-1} \Vol$, in the pmGH-sense. By Theorem \ref{thm:GMSpmG}, also their isomorphism classes converge in the pmG-sense. Due to uniqueness of the limits, and the fact that the $\mathsf{CD}(K,N)$ condition is invariant under isomorphisms of metric measure spaces, we have that $(M,d,\meas,x_0)$ is a $\mathsf{CD}(K,N)$-space. Also the Riemannian structure of $(M,g)$ naturally ensures the infinitesimal Hilbertianity of $(X,d,\meas)$ (cf.\ also the discussion on Sobolev spaces from the previous section), making it an $\mathsf{RCD}(K,N)$-space. Since $\Vol$ is simply a rescaling of $\meas$ and the $\mathsf{RCD}(K,N)$-condition is invariant under this operation, the claim follows.
\end{proof}

\subsection{Consequences of the equivalence}
\label{Subsection: Consequencesofequivalence}
The main consequence of the equivalence of the distributional Ricci curvature bound and the $\mathsf{RCD}$ condition is that it allows one to apply to Riemannian manifolds with a low-regularity metric the many theorems that have been proven in the $\mathsf{RCD}$ setting, mostly generalizations of well known theorems from the smooth theory. In this section we discuss some of the more immediate or useful of these results.\\
Before stating the first results, we need to clarify that when we talk about geodesics there are two related but non-coinciding notions. In Riemannian geometry geodesics are curves $\gamma$ which solve the geodesic equation
\begin{equation*}
    \Ddot{\gamma}_k(t) + \Gamma^{ij}_k \Dot{\gamma}_i(t)\Dot{\gamma}_j(t)=0,
\end{equation*}
while in metric geometry we often call geodesics the curves such that minimize the distance, i.e. $L(\gamma)= d(\gamma(0),\gamma(1))$, where $L$ is the curve length functional. If the metric $g$ is at least $C^{1,1}$, then it is well-known that solutions of the geodesic equation locally minimize the length functional (see \cite[Thm.\ 6]{minguzzi2015convex}). However, in \cite{hartman1951problems} (see also \cite{samstein2018geodlowreg}) one may find examples of $C^{1,\alpha}$-metrics (with $\alpha \in (0,1)$) for which there exist solutions to the geodesic equation that do not minimize the length functional anywhere even locally. 

We will refer to geodesics in the metric sense as \emph{minimizing geodesics}. Due to \cite[Thm. 1.4]{lytyam06geod}, those are $C^{1,1}$ curves whenever $g\in C^{0,1}_\loc(\cT^0_2 M)$.

We already know from the metric Hopf-Rinow Theorem that on a complete connected Riemannian manifold with $C^0$ metric any two points admit a minimizing geodesic between them. Adding the $\mathsf{RCD}(K,N)$-condition gives us additional properties for minimizing geodesics.
\begin{Proposition}
    Let $(M,g)$ be a complete, connected Riemannian manifold with $g\in C^{0,1}_\loc(\cT^0_2 M)$ and suppose that $\Ric\ge K$ in the distributional sense for some $K\in \R$. Then the following properties hold:
    \begin{enumerate}
        \item Given any $p\in M$, for $\Vol$-a.e. $q\in M$ there exists a unique minimizing geodesic with endpoints $p$ and $q$.
        \item $(M,d)$ is a non-branching metric space, i.e.\ if there exist two minimizing geodesics $\gamma,\Tilde{\gamma}:[0,1]\to M$ and $t\in(0,1)$ such that $\gamma\vert_{[0,t]} = \Tilde{\gamma}\vert_{[0,t]}$, then $\gamma = \Tilde{\gamma}$.
    \end{enumerate}
\end{Proposition}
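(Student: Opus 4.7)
The plan is to reduce both claims to established facts in the $\mathsf{RCD}$ framework by invoking Theorem \ref{thm:distribtosynth}: under the hypotheses of the Proposition, the distributional bound $\Ric \geq K$ upgrades to the synthetic statement that $(M, d, \Vol)$ is an $\mathsf{RCD}(K, \dim M)$ space. Once this reduction is in place, both assertions follow from standard results in the literature on synthetic Ricci curvature bounds, and the only issue is to quote them correctly.

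For (i), the plan is to appeal to the $\Vol$-almost everywhere uniqueness of minimizing geodesics emanating from a fixed base point, a well-known consequence of finite-dimensional synthetic Ricci bounds. One clean route is via the essential non-branching property of $\mathsf{RCD}(K,N)$-spaces established by Rajala and Sturm together with the existence of good optimal dynamical plans produced by the $\mathsf{CD}(K,N)$-condition: for fixed $p$, transporting $\delta_p$ to any probability measure $\mu \ll \Vol$ yields an essentially non-branching optimal plan whose disintegration along geodesics forces uniqueness of the minimizer connecting $p$ to $\mu$-a.e.\ endpoint; by arbitrariness of $\mu$ this gives uniqueness for $\Vol$-a.e.\ $q$. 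Alternatively, one may argue through the fact that on $\mathsf{RCD}(K,N)$-spaces the function $d(p,\cdot)$ is almost everywhere differentiable with $|\nabla d(p,\cdot)|=1$ a.e., so that the initial direction of any minimizing geodesic from $p$ to such a point $q$ is uniquely determined.

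For (ii), the plan is to invoke Deng's theorem, which upgrades the essential non-branching of Rajala--Sturm to pointwise non-branching for every $\mathsf{RCD}(K,N)$-space with $N < \infty$: any two minimizing geodesics agreeing on a proper initial subinterval must coincide everywhere. Since we have already concluded that $(M,d,\Vol)$ is $\mathsf{RCD}(K, \dim M)$, this applies directly.

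I do not anticipate any serious obstacle. The only bookkeeping is to verify the compatibility between the metric notion of minimizing geodesic in the $\mathsf{RCD}$ literature (constant-speed curves realizing the distance) and the one used here, which is immediate, and to observe that the admissible dimension in the $\mathsf{RCD}$-condition is exactly $\dim M$, as provided by Theorem \ref{thm:distribtosynth}. No new analytic input beyond the equivalence established in the previous subsection is required.
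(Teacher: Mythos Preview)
Your proposal is correct and follows essentially the same route as the paper: reduce to the $\mathsf{RCD}(K,\dim M)$ setting via Theorem \ref{thm:distribtosynth} and then quote the known results (the paper cites \cite{girast2016optimalexp} for (i) and \cite{deng2020branching} for (ii)). The only small bookkeeping point the paper adds that you omit is that Deng's result is stated for $N>1$, so in the edge case $\dim M=1$ one uses that $\mathsf{RCD}(K,1)$ implies $\mathsf{RCD}(K,2)$.
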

Both statements have been proven on $\mathsf{RCD}(K,N)$-spaces or on spaces satisfying the $\mathsf{RCD}^*(K,N)$-condition, which has been proven to be equivalent in \cite{cavmil2021global} (see \cite{girast2016optimalexp} and \cite{deng2020branching}). Though technically the second result does not admit $N=1$, it is sufficient to recall that $\mathsf{RCD}(K,1)$ implies $\mathsf{RCD}(K,2)$.

A well-known result in the theory of synthetic curvature bounds is the extension of the Cheeger-Gromoll splitting theorem to $\mathsf{RCD}(0,N)$, proven in \cite{gigli2013splitting} (see \cite{gigli2014splitoverview} for a more user-friendly overview).
\begin{Theorem}
    Let $(X,d,\meas)$ be an $\mathsf{RCD}(0,N)$ space with $1\leq N <\infty$. Assume $\supp \meas$ contains a line, i.e. a curve $c:\R \to X$ such that
    \begin{equation*}
        d(c(t),c(s))= \abs{t-s}\qquad \text{for all }t,s\in\R.
    \end{equation*}
    Then there exists a metric measure space $(X',d',\meas')$ such that $X$ is isomorphic to $X'\times \R$ endowed with the product measure $\meas'\times\mathcal{L}^1$ and the product distance
    \begin{equation*}
        d_{X'\times \R}((x',t)(y',s)):= \sqrt{d'(x',y')^2 + \abs{t-s}^2 }.
    \end{equation*}
    Furthermore if $N\geq 2$, then $(X',d',\meas')$ is an $\mathsf{RCD}(0,N-1)$ space. For $N\in [1,2)$, $X'$ is a singleton.
\end{Theorem}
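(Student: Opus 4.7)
The plan is to follow the strategy of Gigli, adapting the classical Cheeger-Gromoll argument to the $\mathsf{RCD}$ setting via the second-order differential calculus on $\mathsf{RCD}$ spaces; the outline closely mirrors Theorem \ref{thm:SplittingTheorem} in the present paper, but with every analytic ingredient replaced by its synthetic counterpart. First, I would introduce the approximate Busemann functions $b_t^{\pm}(x) := t - d(x, c(\pm t))$, observe that $t \mapsto b_t^{\pm}(x)$ is non-decreasing and bounded above by $d(x, c(0))$, and define $b^{\pm}(x) := \lim_{t \to \infty} b_t^{\pm}(x)$. These limits are $1$-Lipschitz, so in particular $b^\pm \in W^{1,2}_{\loc}(X)$ in the $\mathsf{RCD}$ sense, and $|\nabla b^\pm| \leq 1$ $\meas$-a.e.

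Second, I would establish that $\Delta b^\pm \geq 0$ in the weak sense, using the Laplacian comparison for distance functions on $\mathsf{RCD}(0,N)$ spaces (which is available in the $\mathsf{RCD}$ framework, and whose smooth-approximation analogue was used in Proposition \ref{prop:busemannSubharmonic} above). The triangle inequality gives $b^+ + b^- \leq 0$ globally with equality along the line $c$, so the strong maximum principle on $\mathsf{RCD}$ spaces forces $b^+ = -b^-$ and therefore both are harmonic.

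Third, I would apply the sharp Bochner inequality available on $\mathsf{RCD}(0,N)$ spaces, namely
\begin{equation*}
\tfrac{1}{2} \bm{\Delta} |\nabla b|^2 \geq |\Hess b|_{\mathrm{HS}}^2 + \tfrac{(\Delta b)^2}{N} - g(\nabla b, \nabla \Delta b) \cdot \meas,
\end{equation*}
to the harmonic $b := b^+$. Integrating against a suitable cutoff and using $|\nabla b| \leq 1$ forces $|\nabla b| = 1$ $\meas$-a.e.\ and $\Hess b = 0$ as a tensor field. This is the synthetic analogue of the computation carried out in Section \ref{Section: C1splitting} and uses crucially Gigli's second-order calculus (Hessian, measure-valued Ricci, etc.).

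Finally, the splitting itself is constructed via the gradient flow of $\nabla b$: one shows that this flow $(\Fl_t)_{t\in\R}$ exists globally as a measure-preserving regular Lagrangian flow, and that $\Hess b = 0$ together with $|\nabla b| = 1$ implies the flow is an isometric $1$-parameter group which maps level sets onto level sets while preserving distances inside them. Setting $X' := b^{-1}(0)$ equipped with the restricted distance and disintegrated measure, one then produces the isomorphism $\Phi: X' \times \R \to X$, $(x', t) \mapsto \Fl_t(x')$. The hardest step will be this last one: on a general $\mathsf{RCD}$ space there is no flow-box theorem available, so one must argue entirely through the calculus of $\mathsf{RCD}$ (continuity equation, horizontal/vertical decomposition of the tangent module along $\nabla b$, disintegration of $\meas$ along the flow lines) to obtain both the product structure of the distance and the product structure of the measure. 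The $\mathsf{RCD}(0, N-1)$ property of the quotient is then deduced by tensorization / stability of the curvature-dimension condition under such isomorphic product decompositions.
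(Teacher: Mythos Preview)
The paper does not prove this theorem at all: it is merely quoted in Subsection~\ref{Subsection: Consequencesofequivalence} as a known result, with the proof attributed entirely to Gigli \cite{gigli2013splitting, gigli2014splitoverview}. There is therefore no ``paper's own proof'' to compare your proposal against. Your outline is a faithful high-level summary of Gigli's strategy (Busemann functions, Laplacian comparison, strong maximum principle, Bochner with Hessian, gradient flow of $\nabla b$), and it correctly identifies the genuinely hard step as the construction of the isometry from the regular Lagrangian flow in the absence of a flow-box theorem; but since the present paper only cites this result, no further comparison is possible.
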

This result can clearly be applied to a much wider class of metric measure spaces than the low-regularity Riemannian manifolds we took into consideration. On the other hand our version of the splitting theorem, Theorem \ref{thm:SplittingTheorem}, ensures that the splitting map has higher regularity than just Lipschitz.

Other theorems proven on $\mathsf{RCD}$ spaces which could prove useful tools for low-regularity Riemannian geometry include:
\begin{enumerate}
    \item a generalized Bishop-Gromov inequality for volumes of balls and surface measures of spheres and a generalized Brunn-Minkowski inequality proven in \cite{Stu:06a, Stu:06b};
    \item a generalized Bonnet-Meyers theorem, also proven in \cite{Stu:06a, Stu:06b} (see also \cite[App.]{graf2020singularity} for a distributional version);
    \item a generalized L\'evy-Gromov isoperimetric inequality, proven in \cite{cavmon2017sharp};
    \item a generalized Poincar\'e inequality, proven in \cite{rajala2012local}.
\end{enumerate}

\section{Conclusion \& outlook}
\label{Section: Conclusionandoutlook}

In this work, we have established the Cheeger-Gromoll splitting theorem for $C^1$-Riemannian metrics (see Theorem \ref{thm:SplittingTheorem}), where we were able to obtain a higher regularity of the splitting isometry than the Lipschitz regularity one automatically obtains from the $\mathsf{RCD}$-splitting theorem \cite{gigli2013splitting, gigli2014splitoverview}. We clarified the case of metrics of higher regularity in Theorem \ref{thm: higherregularitysplitting}. An essential tool to obtain our splitting results was a generalized non-smooth Bochner-Weitzenböck formula (see Theorem \ref{thm:BochWeitz}). We also obtained the flatness criterion for semi-Riemannian metrics of regularity $C^1$, giving the three usual characterizations via vanishing of the Riemann curvature tensor, the existence of local parallel orthonormal frames, and the existence of local flat coordinates (see Theorem \ref{Theorem: FlatnesscriterionC1} and Corollary \ref{Corollary: FlatnessforCkmetrics}). Finally, we discussed the relationship between various notions of Sobolev spaces (see Subsection \ref{Subsection: CompatibilitybetweennotionsofSobolev}), and gave alterative proofs for locally Lipschitz metrics (see Theorems \ref{thm:RCDtodistributional} and \ref{thm:distribtosynth}) of the equivalence between synthetic and distributional Ricci curvature lower bounds established in \cite{mondino2024equivalence} for metrics of regularity $C^0 \cap W^{1,2}_\loc$.

Various open problems remain regarding curvature in low regularity. For example, it would be of great interest to generalize the results of \cite{mondino2024equivalence} in several directions, e.g.\ the low-regularity Finslerian setting (dropping the Riemannian condition), even lower regularity of the metric (perhaps even for Geroch-Traschen metrics; see \cite{geroch1987strings, steinbauer2009geroch}), or to the Lorentzian signature (for this, one can expect that a differential calculus in the synthetic Lorentzian setting as developed in \cite{Octet24+} will be essential). Dropping the regularity of the metric in the rigidity theorems (the splitting theorem and the flatness criterion) even further is also worth investigating. Finally, a Lorentzian low regularity version of the Cheeger-Gromoll rigidity theorem (see \cite{eschenburg1988splitting, galloway1989lorentzian, newman1990proof} for the smooth case) is of importance to mathematical General Relativity, and is currently work in progress \cite{Quintetellipticsplitting24}.

\section*{Data availability statement}
There is no data associated with this manuscript.
\section*{Conflict of interest statement}
The authors confirm that no conflict of interest exists for this work.

\section*{Acknowledgments}
This research was funded in part by the Austrian Science Fund (FWF) [Grants DOI \linebreak \href{https://doi.org/10.55776/PAT1996423}{10.55776/PAT1996423}, \href{https://doi.org/10.55776/P33594}{10.55776/P33594}, and \href{https://doi.org/10.55776/EFP6}{10.55776/EFP6}]. For open access purposes, the authors have applied a CC BY public copyright license to any author accepted manuscript version arising from this submission. Argam Ohanyan is also supported by the ÖAW-DOC scholarship of the Austrian Academy of Sciences. Alessio Vardabasso acknowledges the support of the Vienna School of Mathematics.

\addcontentsline{toc}{section}{References}

% \bibliography{Bibliography} 
% \bibliographystyle{acm}
%\bibliographystyle{alpha}
\end{document}